\documentclass[12pt,usenames,dvipsnames]{amsart}
\usepackage{amsmath, amssymb, amsthm, array, bm, calrsfs, comment, enumitem, graphicx, hyperref, latexsym, makecell, mathpazo, tikz-cd}
\usepackage[margin=1in]{geometry}
\usepackage[cmtip, all]{xy}

\newtheorem{theorem}{Theorem}[section]
\newtheorem{lemma}[theorem]{Lemma}
\newtheorem{proposition}[theorem]{Proposition}
\newtheorem{corollary}[theorem]{Corollary}

\theoremstyle{definition}
\newtheorem{example}[theorem]{Example}

\theoremstyle{remark}
\newtheorem{remark}[theorem]{Remark}

\numberwithin{equation}{section}
\numberwithin{table}{section}
\numberwithin{figure}{section}


\newcommand{\GG}{{\mathbb G}}	
\newcommand{\PP}{\mathbb{P}}         
		
\newcommand{\RR} {{\mathbb R}}		
\newcommand{\ZZ} {{\mathbb Z}}	
\newcommand{\GGm}{{\mathbb G}_m}	

\renewcommand{\AA}{\ensuremath{\mathbb{A}}} 
\newcommand{\KK}{\ensuremath{\mathbb{K}}}



\DeclareMathOperator{\Gr}{Gr}

\DeclareMathOperator{\Span}{span}
\DeclareMathOperator{\id}{id} 
\DeclareMathOperator{\Pf}{Pf} 
\DeclareMathOperator{\A}{A} 
\DeclareMathOperator{\D}{D}

\DeclareMathOperator{\val}{val}

\DeclareMathOperator{\Stab}{Stab}
\DeclareMathOperator{\rank}{rank}

\DeclareMathOperator{\Spec}{{Spec}}
\DeclareMathOperator{\Proj}{{Proj}}
\DeclareMathOperator{\init}{in}
\DeclareMathOperator{\Trop}{Trop}

\DeclareMathOperator{\conv}{conv}

\DeclareMathOperator{\St}{Star}
\DeclareMathOperator{\GL}{GL}

\DeclareMathOperator{\Dr}{Dr}

\DeclareMathOperator{\PGL}{PGL}
\DeclareMathOperator{\Orth}{O}
\DeclareMathOperator{\sch}{sch}

\providecommand{\Sn}[1]{\ensuremath{\mathfrak{S}_{#1}}}
\DeclareMathOperator{\face}{face}

\DeclareMathOperator{\sgn}{sgn}
\DeclareMathOperator{\sg}{sg}

\DeclareMathOperator{\Sec}{S}

\DeclareMathOperator{\Top}{top}
\DeclareMathOperator{\semigp}{semigp}
\DeclareMathOperator{\even}{ev}
\DeclareMathOperator{\odd}{odd}
\DeclareMathOperator{\rowsp}{rowsp}
\DeclareMathOperator{\codim}{codim}

\newcommand{\bS}{\mathbb{S}}

\DeclareMathOperator{\TS}{T\bS}

\newcommand{\GrU}[2]{\Gr_0(#1,#2)}
\newcommand{\GrC}[2]{\Gr(#1,#2)}

\newcommand{\SF}[1]{\Sigma_{\Sec}(#1)}

\newcommand{\ETrop}{\mathfrak{Trop}}

\newcommand{\supp}{\operatorname{supp}}

\newcommand{\set}[2]{\left\{ #1 \, : \, #2  \right\}}
\newcommand{\chow}[2]{#1 /\!\!\!/\! #2}


\newcommand{\kk}{\mathbf{k}}

\newcommand{\bn}{[n]}

\newcommand{\calP}{\cal}

\def\pB{{\calP B}}

\newcommand{\cal}{\mathcal}

\def\cB{{\cal B}}
\def\cC{{\cal C}}

\def\cO{{\cal O}}
\def\cP{{\cal P}}

\def\cU{{\cal U}}

\newcommand{\bracket}[2]{\langle #1,#2 \rangle}

\usepackage{tikz}
\usepackage{mathtools,tikz-cd}  

\tikzset{%
	add/.style args={#1 and #2}{to path={%
			($(\tikztostart)!-#1!(\tikztotarget)$)--($(\tikztotarget)!-#2!(\tikztostart)$)%
			\tikztonodes}}
}



\newcommand{\gfan}{\texttt{gfan}}

\newcommand{\zspan}[1]{\ensuremath{\ZZ\!\cdot\!#1}}
\newcommand{\rspan}[1]{\ensuremath{\RR\!\cdot\!#1}}

\newcommand{\kSch}{\mathop{\mbox{$\kk$-$\sch$}}}

\title{Initial degenerations of spinor varieties}

\author{Daniel Corey}

\date{\today}

\address{Technische Universit\"at Berlin, Stra{\ss}e des 17 Juni 135, Berlin 10623, Germany.}

\email{corey@math.tu-berlin.de}

\begin{document}

\begin{abstract}
We construct closed immersions from initial degenerations of the spinor variety $\bS_n$ to inverse limits of strata associated to even $\Delta$-matroids. As an application, we prove that these initial degenerations are smooth and irreducible for $n\leq 5$ and identify the log canonical model of the Chow quotient of $\bS_5$ by the action of the diagonal torus of $\GL(5)$. 

	\smallskip
	\noindent \textbf{Keywords}: Spinor varieties, initial degenerations, Delta-matroids, Chow quotient.
	
	\smallskip
	\noindent \textbf{Mathematics Subject Classification}: 14T05 (primary), 15A66, 52B40 (secondary).
\end{abstract}

\maketitle

\section{Introduction}
The pure spinor variety, denoted $\bS_n^{\pm}$, parameterizes totally isotropic subspaces of a $2n$-dimensional vector space $V$ equipped with a symmetric, nondegenerate quadratic form of Witt index $n$. From the representation-theoretic viewpoint, $\bS_{n}^{\pm}$ is the Lie-type $\D$ analog of the Grassmannian. The Wick embedding, similar to the Pl\"ucker embedding, realizes $\bS_{n}^{\pm}$ as a closed subvariety of $\PP(\wedge^*E)$, where $E$ is some fixed totally isotropic subspace of $V$. The exterior algebra of $E$ splits as a direct sum of its even and odd parts, i.e.,  $\wedge^*E = \wedge^{\even} E \oplus \wedge^{\odd} E$, and $\bS_n^{\pm}$ has two (projectively equivalent) irreducible components $\bS_n$ and $\bS_n^{-}$, one contained in $\PP(\wedge^{\even} E)$ and the other in $\PP(\wedge^{\odd} E)$. Our convention is that $\bS_n\subset \PP(\wedge^{\epsilon(n)}E)$ where $\epsilon(n)$ is the parity of $n$.

We investigate the initial degenerations of $\bS_{n}^{\circ}$---the open dense cell of $\bS_{n}$ given by the nonvanishing of all  Wick coordinates with $\epsilon(n)$ parity---via their relation to isotropical linear spaces in the sense of Rinc\'on \cite{Rincon}. 
This work is an extension to the Lie-type $\D$ setting of \cite{Corey17} where the author investigates the initial degenerations of the Grassmannian via their relation to tropical linear spaces. Specifically, a vector in the tropical Grassmannian records an initial degeneration and a tropical linear space, and the latter records a diagram of matroid strata parameterized by a regular matroidal subdivision of the hypersimplex. The main result in \textit{loc. cit.} produces a closed immersion from the initial degeneration into the inverse limit of this diagram. This result has a number of applications to $\GrU{d}{n}$---the open dense cell of $\GrC{d}{n}$ given by the nonvanishing of all Pl\"ucker coordinates---especially $\GrU{3}{7}$.  Specifically, the initial degenerations of $\GrU{3}{7}$ are smooth and irreducible, the tropicalization of $\GrU{3}{7}$ with respect to the Pl\"ucker embedding is faithful, and the Chow quotient of $\GrC{3}{7}$ by the diagonal torus of $\PGL(7)$ is the log canonical compactification of the moduli space of 7 lines in $\PP^2$ in linear general position, partially resolving a conjecture of Hacking, Keel, and Tevelev \cite[Conjecture~1.6]{KeelTevelev2006}. As Grassmannians and matroids are Lie-type $\A$ partial flag varieties and Coxeter matroids, respectively, we expect that many of the results in \cite{Corey17} extend to a broader class of partial flag varieties. In this paper, we confirm this expectation in the Lie-type $\D$ setting.

There are numerous formulations of Lie-type $\D$ Coxeter matroids \cite{BorovikGelfandWhite, Bouchet87, Bouchet97}; we use the language of $\Delta$-matroids.  Given $w$ in $\TS_n^{\circ} := \Trop(\bS_{n}^{\circ})$, we may form an initial degeneration $\init_w\bS_{n}^{\circ}$ and an isotropical linear space; the latter encodes a regular subdivision $\Delta_{w}$ of the $\Delta$-hypersimplex $\Delta(n)$
into $\Delta$-matroid polytopes \cite{Rincon}.  
The collection of totally isotropic subspaces realizing a $\Delta$-matroid $M$ forms a locally-closed subscheme  $\bS_{M}\subset \bS_n^{\pm}$.  If $Q$ is a $\Delta$-matroid polytope and $Q'$ is a face, then $Q'$ is also a $\Delta$-matroid polytope (by the Gelfand-Serganova theorem \cite[Theorem~6.3.1]{BorovikGelfandWhite}), and there is a morphism $\bS_{M_{Q}} \to \bS_{M_{Q'}}$, where, e.g., $M_{Q}$ denotes the $\Delta$-matroid of $Q$. These morphisms determine a diagram of type $\Delta_w$, hence we may form the inverse limit $\varprojlim_{Q\in \Delta_w} \bS_{M_Q}$. 

\begin{theorem}
	\label{thm:closedImmersionIntro}
	There is a closed immersion $\init_{w}\bS_n^{\circ} \hookrightarrow \varprojlim_{Q\in \Delta_w} \bS_{M_Q}$. 
\end{theorem}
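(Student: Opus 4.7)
The plan is to mimic the strategy from the Grassmannian setting of \cite{Corey17}, replacing Plücker coordinates by Wick coordinates and matroid strata by $\Delta$-matroid strata $\bS_M$. Throughout, the basic combinatorial input is that $w$ determines a regular subdivision $\Delta_w$ of $\Delta(n)$ whose faces $Q$ are $\Delta$-matroid polytopes, and that $\init_w \bS_n^{\circ}$ has a covering by affine charts indexed by bases sitting in the relevant strata.

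First, for each face $Q \in \Delta_w$, I would construct a morphism $\phi_Q \colon \init_w \bS_n^{\circ} \to \bS_{M_Q}$ as follows. Pick a relative interior point $w_Q$ of the cone of the secondary fan whose subdivision has $Q$ as a face, and refine $\init_w$ by $\init_{w_Q}$. Passing to the coordinate subscheme cut out by the Wick coordinates indexed by the vertices of $Q$ (the bases of $M_Q$) determines a morphism whose image lies in $\bS_{M_Q}$, since the Wick relations degenerate to the defining relations of that stratum. The geometric content is that $\init_{w_Q} \bS_n^{\circ}$ retains only the Wick coordinates in the basis support of $M_Q$, matching the ambient coordinates of $\bS_{M_Q}$.

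Next, I would verify that these morphisms are compatible with the face maps $\bS_{M_Q} \to \bS_{M_{Q'}}$ of the diagram whenever $Q' \subseteq Q$. This is a coordinate-level check: further restricting the set of Wick coordinates in play is compatible with the stratum forgetful maps, so the $\phi_Q$ assemble into a single morphism $\Phi \colon \init_w \bS_n^{\circ} \to \varprojlim_{Q\in \Delta_w} \bS_{M_Q}$.

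The main work is then to prove $\Phi$ is a closed immersion. I would do this chart-by-chart: fix a basis $B$ of the generic $\Delta$-matroid (a vertex lying in a top-dimensional face of $\Delta_w$), and work on the affine open where the Wick coordinate $p_B$ is nonzero. On coordinate rings, one must check that every Wick coordinate $p_A$ of $\init_w \bS_n^{\circ}$ is expressible as a polynomial in the coordinates pulled back from the various $\bS_{M_Q}$. This uses the initial forms of the quadratic Wick relations: those that degenerate nontrivially already hold on each $\bS_{M_Q}$, and those that survive as nonzero relations across faces give exactly the gluing conditions defining the inverse limit. Injectivity on points then follows because a totally isotropic subspace can be reconstructed from its Wick coordinates once enough of them are pinned down by the strata. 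The main obstacle I anticipate is the sign bookkeeping in the Wick relations, which, unlike the Plücker relations, are indexed by symmetric differences of subsets of $[n]$ with parity constraints; one must identify precisely which quadratic relations degenerate to $\Delta$-matroid-stratum relations and which become matching conditions across $\Delta_w$, and show that together they exhaust the initial ideal of $\bS_n^{\circ}$.
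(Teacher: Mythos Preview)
Your overall architecture---build maps $\phi_Q:\init_w\bS_n^\circ\to\bS_{M_Q}$, check compatibility, assemble into $\Phi$, then argue $\Phi$ is a closed immersion---matches the paper. But the emphasis is misplaced, and your last paragraph aims at the wrong target.

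The paper's construction of $\phi_Q$ is much cleaner than yours: there is no need to ``pick a relative interior point $w_Q$'' or to refine initial degenerations. The map is induced directly by the inclusion of polynomial rings $B_{M_Q}=\kk[q_\lambda:\lambda\in\cB(M_Q)]\hookrightarrow \kk[q_\lambda:\lambda\in E(n)]$. The actual content is checking that the extension of $I_{M_Q}$ lands inside $\init_w I_n$; this is done by showing each quadric generator $P_{M_Q}(\mu,\nu)$ is either zero or equals $\init_w P(\mu,\nu)$ for the corresponding Wick quadric. That is the only place the sign and parity bookkeeping enters, and it is a short combinatorial argument comparing the bases condition in \eqref{eq:basesCell} with the $w$-minimality defining initial forms.

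The more serious issue is your plan for the closed-immersion step. You write that you must ``show that together they exhaust the initial ideal of $\bS_n^\circ$.'' That would prove $\Phi$ is an \emph{isomorphism}, which is neither claimed nor (in general) true; the paper only asserts a closed immersion, and later shows by direct case analysis that it is an isomorphism for $n\le 5$. For a closed immersion of affine schemes you need only surjectivity of the induced ring map
\[
\varinjlim_{Q\in\Delta_w} R_{M_Q}\;\longrightarrow\; S^{-1}\kk[q_\lambda]/\init_w I_n,
\]
and this is immediate: every $\lambda\in E(n)$ is a basis of $M_Q$ for some top-dimensional $Q\in\Delta_w$ (the maximal cells cover $\Delta(n)$), so every generator $q_\lambda$ is hit. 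No chart-by-chart argument, no analysis of which relations become ``gluing conditions,'' and no exhaustion of $\init_w I_n$ is required. Your anticipated ``main obstacle'' is therefore not an obstacle to the theorem as stated; drop it and the proof becomes a few lines.
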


We give a geometric interpretation of this closed immersion in \S \ref{sec:symmetricMatroids}; here is a summary. To define a morphism $\init_w\bS_n^{\circ} \to \varprojlim_{Q\in \Delta_w} \bS_{M_Q}$, we must define a morphism $\init_w\bS_n^{\circ} \to \bS_{M_Q}$ for each $Q\in \Delta_{w}$ (and these morphisms must be compatible with the face morphisms).
Let $\KK = \kk(\!(t^{\RR})\!)$,  equip it with its usual $t$-adic valuation, and let $x$ be a $\kk$-point of $\init_w\bS_{n}^{\circ}$. There is a $\KK$-point $q$ of $\bS_n^{\circ}$ whose exploded tropicalization is $x$ \cite{Payne}. Let $F_{q}\subset \KK^{2n}$ be the totally isotropic subspace with Wick vector $q$, and $F_q^{\circ} = F_{q} \cap (\KK^*)^{2n}$.  If $v$ is in the cell of $\Trop(F_{q}^{\circ})$ dual to $Q\in \Delta_w$, then  $\overline{\init_{v}F_q^{\circ}}$ is a totally isotropic subspace of $\kk^n$ that realizes $M_{Q}$. The map $\init_{w}\bS_{n}^{\circ} \to \bS_{M_Q}$ takes $x$ to (the Wick vector of) $\overline{\init_{v}F_q^{\circ}}$.

Working with the full poset $\Delta_w$ is impractical due to its size. Nevertheless, the inverse limit  $\varprojlim_{Q\in \Delta_w} \bS_{M_Q}$ depends only on the adjacency graph $\Gamma_w$ of $\Delta_w$. The vertices of $\Gamma_w$ correspond to the codimension 0 cells, and two vertices are connected by an edge whenever their corresponding cells intersect along a facet. In \S \ref{sec:TechniquesLimitsStrata}, we show how this encodes a diagram of $\Delta$-matroid strata, form the inverse limit $\varprojlim_{\Gamma_w} \bS_{M_Q}$, and deduce
\begin{equation*}
\varprojlim_{Q\in \Delta_w} \bS_{M_Q} \cong \varprojlim_{\Gamma_w} \bS_{M_Q}.
\end{equation*}
The analogous statement in the Grassmannian setting is due to Cueto \cite[Appendix~C]{Corey17}. We develop various techniques for evaluating these inverse limits, which we use, together with Theorem \ref{thm:closedImmersionIntro}, to prove the following theorem.

\begin{theorem}
	\label{thm:introSchon}
	The initial degenerations of $\bS_{n}^{\circ}$ are smooth and irreducible for $n\leq 5$. 
\end{theorem}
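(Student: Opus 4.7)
The plan is a case-by-case verification over cones of $\TS_n^\circ$, using Theorem \ref{thm:closedImmersionIntro} to reduce the smoothness and irreducibility of $\init_w\bS_n^\circ$ to the same properties for an inverse limit of $\Delta$-matroid strata. Since the isomorphism type of $\init_w\bS_n^\circ$ is constant on the relative interior of each cone of the Gr\"obner fan of $\bS_n^\circ$, it suffices to pick one representative $w$ from each orbit under the natural symmetry group acting on $\TS_n^\circ$---the rescaling torus on Wick coordinates, the $\GL(n)$-action on the isotropic subspace $E$, and any involutions exchanging $\bS_n$ with $\bS_n^{-}$---and verify the two properties for such $w$.

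For each representative, I would proceed in four steps. First, read off the matroidal subdivision $\Delta_w$ of the $\Delta$-hypersimplex $\Delta(n)$ from the isotropical linear space attached to $w$ via \cite{Rincon}, and extract its adjacency graph $\Gamma_w$. Second, identify the $\Delta$-matroid strata $\bS_{M_Q}$ for the codimension-$0$ cells $Q$ and the face morphisms indexed by edges of $\Gamma_w$, and apply the techniques of \S\ref{sec:TechniquesLimitsStrata} to compute
\begin{equation*}
\varprojlim_{Q\in \Delta_w} \bS_{M_Q} \;\cong\; \varprojlim_{\Gamma_w} \bS_{M_Q}.
\end{equation*}
Third, verify smoothness and irreducibility of this inverse limit, ideally by exhibiting it as a product of a torus with an explicit rational variety. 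Fourth, compare dimensions with $\init_w\bS_n^\circ$ (or, equivalently, use the surjectivity of exploded tropicalization invoked in the sketch after Theorem \ref{thm:closedImmersionIntro} to check surjectivity on $\kk$-points) to conclude that the closed immersion is an isomorphism, so that smoothness and irreducibility transfer to the initial degeneration.

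The cases $n\leq 4$ should be tractable by hand: $\bS_2 \cong \PP^1$, so $\bS_2^\circ$ is already a torus; the exceptional isomorphism between Lie types $D_3$ and $A_3$ identifies $\bS_3$ with $\PP^3$ and allows one to import Grassmannian-type conclusions from \cite{Corey17}; and $\bS_4$ is a smooth $6$-dimensional quadric in $\PP^7$ cut out by a single Wick relation, so the initial degenerations can be read off directly. The principal obstacle is $n=5$, where $\TS_5^\circ$ has many orbits of maximal cones and each $\Delta_w$ is combinatorially rich. The bulk of the work is then (i) a systematic enumeration of orbits of maximal cones in $\TS_5^\circ$, feasible with \gfan, \pmk, and \MT, and (ii) enough control over the inverse-limit computation---typically by factoring off obvious toric parts and reducing each inverse limit to a small essential piece---that smoothness and irreducibility can be checked on each orbit individually. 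One must finally confirm that the conclusion on maximal cones propagates to their faces via compatibility of initial degenerations, thereby covering every $w \in \TS_5^\circ$.
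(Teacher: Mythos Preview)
Your outline is correct and matches the paper's strategy: reduce to orbit representatives under the symmetry group $G_n$, compute $\Delta_w$ and its adjacency graph $\Gamma_w$, show the inverse limit is smooth, irreducible, and of dimension $\binom{n}{2}$, and conclude via Theorem~\ref{thm:closedImmersionIntro} that the closed immersion is an isomorphism. Two points of comparison. First, you propose checking only maximal cones and propagating to faces; this is legitimate (smoothness by openness in flat families, and irreducibility then follows since a smooth, hence reduced, irreducible special fiber forces the generic fiber of a flat one-parameter family to be irreducible), but the paper instead treats all $19$ nontrivial $G_5$-orbits of cones directly. Second, the paper's verification is driven by combinatorial shortcuts you would have to rediscover: when the top cells of $\Delta_w$ share a common basis (``basis-intersecting''), the limit is automatically an open subscheme of $\AA^{\binom{n}{2}}$ (Corollary~\ref{cor:subdGood}); when all but one do and the outlier sits in a triangle of $\Gamma_w$, the same holds (Proposition~\ref{prop:subdAllBut1}); only subdivisions $10$ and $15$ require an explicit step-by-step fiber-product computation in affine coordinates. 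Your maximal-cone reduction trims the list to four orbits, but the hardest one (subdivision~$15$) is among them, so the savings are modest. One small correction: the relevant symmetries are permutations $s_\tau$ and twists $t_\mu$ for $|\mu|$ even, all of which preserve $\bS_n$; involutions exchanging $\bS_n$ with $\bS_n^{-}$ do not act on $\TS_n^\circ$.
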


We conclude by studying the birational geometry of $\chow{\bS_n}{H}$, the Chow quotient of $\bS_n$ by the maximal torus $H$ of the orthogonal group, for $n\leq 5$.  
There is an ordered basis 
of $V$ so that the matrix of the quadratic form is 
\begin{equation}
\label{eq:Q}
Q = \begin{bmatrix}
0 & I \\
I & 0
\end{bmatrix}   
\end{equation}
The \textit{orthogonal group} $\Orth(2n)$ (with respect to the symmetric form $Q$) and its maximal torus $H$ are 
\begin{equation*}
 \Orth(2n) = \{ X \in \GL(2n) \, : \, X^{T} Q X = Q \} \hspace{10pt} \text{and} \hspace{10pt} H = \set{\begin{bmatrix}
 	h^{-1} & 0 \\
 	0 & h
 	\end{bmatrix}}{h\in \GL(n) \text{ is diagonal}}.
\end{equation*}
Any totally isotropic subspace is the row-span of a full-rank $n\times 2n$  matrix of the form $[A|B]$ where $AB^T$ is skew-symmetric \cite[Lemma~3.4.1]{BorovikGelfandWhite}. The action $H\curvearrowright \bS_{n}$ is given by right multiplication:
\begin{equation}
\label{eq:Haction}
[ A | B ] \begin{bmatrix}
h^{-1} & 0 \\
0 & h
\end{bmatrix} = [ Ah^{-1} | Bh ].
\end{equation}

Let us describe how to realize $\chow{\bS_n}{H}$ as a compactification of  $\bS_{n}^{\circ}/H$ inside of a toric variety.  We may realize $H$ as a subtorus of the dense torus $T$ of $\PP(\wedge^{\epsilon(n)}E)$, and the scaling action of $T$ on $\PP(\wedge^{\epsilon(n)}E)$ restricts to the action of $H$ on $\bS_{n}$ from above. The Wick embedding induces an embedding of Chow quotients $\chow{\bS_{n}}{H} \hookrightarrow \chow{\PP(\wedge^{\epsilon(n)}E)}{H}$.  By \cite{KapranovSturmfelsZelevinsky}, $\chow{\PP(\wedge^{\epsilon(n)}E)}{H}$ is the toric variety of the secondary fan of $\Delta(n)/L_{\RR}$, where $L$ is the cocharacter lattice of $H$ and $L_{\RR} = L\otimes_{\ZZ} \RR$. The locus of matroidal subdivisions is the $\Delta$-Dressian $\Dr(n)$, and we denote by $\Sigma_n$ the secondary fan of $\Dr(n)$.  The Chow quotient $\chow{\bS_{n}}{H}$ is the closure of $\bS_n^{\circ}/H$ in the toric variety $X(\Sigma_n/L_{\RR})$.

As a consequence of Theorem \ref{thm:introSchon}, for $n\leq 5$,  $\bS_{n}^{\circ}/H$ is sch\"on in the sense of Tevelev \cite{Tevelev}, and $\chow{\bS_{n}}{H}$ is a sch\"on compactification of $\bS_n^{\circ}/H$. By general results of Hacking, Keel, and Tevelev in \cite{HackingKeelTevelev2009}, determining the log canonical model of $\chow{\bS_{n}}{H}$ amounts to finding a suitable fan structure of $\Trop(\bS_{n}^{\circ})$ (which equals $\Dr(n)$ by \cite[Theorem~4.5]{Rincon}). We show that $\chow{\bS_{n}}{H}$ is log canonical for $n = 4$, see Proposition \ref{prop:n4LogCanonical}.  For $n=5$, there is a coarser fan $\Sigma_{5}'$ supported on $\Dr(5)$; denote by $Y$ the closure of $\bS_5^{\circ}/H$ in $X(\Sigma_5'/L_{\RR})$. 
\begin{theorem}
	\label{thm:logcanonical}
	The Chow quotient $\chow{\bS_{5}}{H}$ is smooth and has a simple normal crossings boundary. The log canonical model of $\chow{\bS_{5}}{H} $ is $Y$ and the refinement $\Sigma_{5} \to \Sigma_{5}'$ induces a log crepant resolution $\chow{\bS_{5}}{H} \to Y$. 
\end{theorem}

\subsection*{Computations and data availability} 
The software package \gfan~ \cite{gfan} was used to compute the tropicalization of $\bS_{5}^{\circ}$ in \S \ref{sec:chow}, and the $\Delta$-matroid subdivisions in Appendix \ref{appendix:subdivisionsn5} were computed using \texttt{polymake}~ \cite{polymake:2000}. Also, \texttt{sage} is used to verify Lemma \ref{lem:YInvariantSubspace}.  No computation takes more than a few seconds on a standard desktop computer. The code may be found at the following website.

\begin{center}
\url{https://github.com/dcorey2814/tropicalSpinorVarieties}
\end{center}

\subsection*{Conventions}
Let $\kk$ be an algebraically closed field of characteristic 0. 
Let $[n]= \{0,1,\ldots,n-1\}$,  $[n]^{*} = \{0^*,1^*,\ldots, (n-1)^*\}$, and $J=[n]\sqcup [n^*]$. Given a set $X$, let ${X \choose d}$ the set of all $d$-element subsets of $X$.  We  use juxtaposition of elements to denote small subsets of $J$, e.g., $ij=\{i,j\}$, $ijk=\{i,j,k\}$, etc. So, given $\lambda\subset \bn$,  we write $\lambda \cup i = \lambda \cup \{i\}$, $\lambda \setminus i = \lambda \setminus \{i\}$, and $\lambda \Delta i = \lambda \Delta \{i\}$, etc.

\section{The pure spinor variety}
In this section, we recall the construction of the spinor variety following the conventions in \cite{Manivel, Rincon}. Let $V=\kk^{2n}$ and $Q$ the quadratic form from \eqref{eq:Q}. An $n$-dimensional vector subspace $F\subset V$ is \textit{totally isotropic} if $x^TQy = 0$ for all $x,y\in F$. The \textit{pure spinor variety} is 
\begin{equation*}
\bS_n^{\pm} = \set{F\subset V}{F \text{ is an } n \text{ dimensional totally isotropic subspace of } V}. 
\end{equation*}
This space has two irreducible components $\bS_n$ and $\bS_{n}^{-}$, which may be distinguished in the following way.   Given $\mu \subset \bn$, set 
\begin{equation*}
\overline{\mu} = \mu \cup \set{i^*\in [n]^{*}}{i\notin \mu}.
\end{equation*}
Note that $|\overline{\mu}|=n$ for any $\mu$. Given any $d$-dimensional subspace $F$ of a $m$-dimensional vector space and $\lambda \in {[m]\choose d}$, let $p_{\lambda}(F)$ be the $\lambda$-th Pl\"ucker coordinate of $F$. The varieties $\bS_n$ and $\bS_{n}^{-}$ are
\begin{align*}
\bS_n = \set{F\in \bS_n^{\pm}}{p_{\overline{\mu}} =0 \text{ if }  n - |\mu|  \text{ is odd} }, \hspace{10pt} 
\bS_n^{-} = \set{F\in \bS_n^{\pm}}{p_{\overline{\mu}} =0 \text{ if }  n - |\mu|  \text{ is even} }.
\end{align*}
Moreover, this characterization describes $\bS_n$ as a closed subvariety of the Grassmannian $\GrC{n}{2n} \subset \PP(\wedge^dV)$. Nonetheless, $\bS_{n}$ embeds into a smaller projective space, as we now describe. 

Denote by $E(n)$ the set of subsets $\lambda$ of $\bn$ such that $n-|\lambda|$ is even.  The \textit{Wick embedding} realizes $\bS_n$ as a closed subvariety of $ \PP(\kk^{E(n)})$; we recall the definition of this map on the open dense affine chart
\begin{equation}
\label{eq:affineChart}
\cU  = \set{F\in\bS_n}{p_{\bn}(F)\neq 0}.
\end{equation}
the Wick embedding looks similar on other affine charts obtained by replacing $[n]$ by $\overline{\mu}$ for any $\mu\subset [n]$ (such a subset is called a \textit{transversal}, see \S \ref{sec:symmetricMatroids}). 

In terms of matrices, if $F\in \cU$, then $F$ is the row span of the $n\times 2n$ matrix $\left[I_n|X \right]$ where $X$ is skew-symmetric.  Given $\lambda\subset \bn$, let $X[\lambda]$ denote the skew-symmetric matrix obtained from the rows and columns of $X$ indexed by $\lambda$. Let $q_{\lambda}(F) = \Pf(X[\bn\setminus \lambda])$ where $\Pf$ denotes the Pfaffian of a skew-symmetric matrix. Note that $q_{\lambda}(F) = 0$ if $n-|\lambda|$ is odd.  The Wick embedding restricted to $\cU$ is
\begin{equation*}
\cU \to \PP(\kk^{E(n)}) \hspace{20pt} F\mapsto [\, q_{\lambda}(F) \, | \, \lambda \in E(n)].
\end{equation*}
Thus, $\bS_n$ is the closure of $\cU$ in $\PP(\kk^{E(n)})$. 

Denote by $\kk[q_{\lambda}]$ the polynomial ring $\kk[q_{\lambda} \, | \, \lambda\in E(n)]$. The homogeneous ideal $I_n\subset \kk[q_{\lambda}]$ of $\bS_{n}$ is generated by the quadrics
\begin{equation}
\label{eq:quadricGens}
P(\mu,\nu) = \sum_{i\in \nu\setminus\mu} (-1)^{\sgn(i;\mu,\nu)} q_{\mu\cup i}q_{\nu\setminus i} \; + \; \sum_{j\in \mu\setminus\nu} (-1)^{\sgn(j;\nu,\mu)} q_{\mu\setminus i}q_{\nu\cup j}.
\end{equation}
where $n-|\mu|$ and $n-|\nu|$ are odd and $|\mu\Delta\nu| \geq 4$; otherwise $P(\mu,\nu) = 0$. In particular, a monomial $q_{\eta}q_{\lambda}$ cannot appear in both sums in the above expression (this is be important in the proofs of Propositions \ref{prop:mapPlucker} and \ref{prop:mapInitToStratum}).  Here,  $\sgn(i;\mu,\nu)$ equals $(-1)^{\ell}$ where $\ell$ is the number of elements $j\in \nu$ with $i<j$ plus the number of elements $j'\in \mu$ such that $i>j'$. 

We end this section by considering the first few examples. When $n=1,2,$ or $3$, $\bS_n=\PP(\kk^{E(n)})$. The first interesting case is $n=4$, where
\begin{equation*}
I_4 = \langle q_{\emptyset} q_{0123} - q_{01}q_{23} + q_{02}q_{13} - q_{03}q_{12} \rangle \subset \kk[q_{\emptyset}, q_{01}, q_{02}, q_{12}, q_{03}, q_{13}, q_{23},q_{0123}].
\end{equation*}
When $n=5$, $I_5$ is the ideal of 
\begin{equation*}
\kk[q_{0},q_{1},q_{2},q_{3},q_{4}, q_{012}, q_{013}, q_{023}, q_{123}, q_{014}, q_{024}, q_{124}, q_{034}, q_{134}, q_{234}, q_{01234}]
\end{equation*}
generated by the quadrics 
\begin{align*}
\begin{array}{ll}
q_{0} q_{123} - q_{1} q_{023} + q_{2} q_{013} - q_{3} q_{012}, & 
q_{0} q_{124} - q_{1} q_{024} + q_{2} q_{014} - q_{4} q_{012}, \\
q_{0} q_{134} - q_{1} q_{034} + q_{3} q_{014} - q_{4} q_{013}, & 
q_{0} q_{234} - q_{2} q_{034} + q_{3} q_{024} - q_{4} q_{023}, \\
q_{1} q_{234} - q_{2} q_{134} + q_{3} q_{124} - q_{4} q_{123}, & 
q_{0} q_{01234} - q_{012} q_{034} + q_{013} q_{024} - q_{023} q_{014}, \\
q_{1} q_{01234} - q_{012} q_{134} + q_{013} q_{124} - q_{014} q_{123}, &
q_{2} q_{01234} - q_{012} q_{234} + q_{023} q_{124} - q_{024} q_{123}, \\
q_{3} q_{01234} - q_{013} q_{234} + q_{023} q_{134} - q_{034} q_{123}, &
q_{4} q_{01234} - q_{014} q_{234} + q_{024} q_{134} - q_{034} q_{124}.
\end{array}
\end{align*}

\section{Polytopes and strata of $\Delta$-matroids}

A \textit{$\Delta$-matroid} on $[n]$ is characterized by a nonempty collection $\cB(M)$ of subsets of $[n]$ satisfying the \textit{symmetric exchange axiom}, i.e., for all $\mu,\nu\in \cB(M)$ and $i\in \mu\Delta\nu$ there is a $j\in \mu\Delta\nu$, such that $\mu\Delta ij \in \cB(M)$. Elements of $\cB(M)$ are called \textit{bases} of $M$.  A $\Delta$-matroid is \textit{even} if $|\mu\Delta\nu|$ is even for all $\mu,\nu \in \cB(M)$.

\subsection{$\Delta$-matroid polytopes} 
Let $e_{1}, \ldots, e_{n}$ be the standard basis of $\RR^n$, and given $\lambda = \{\lambda_1, \ldots, \lambda_k\}$, let $e_{\lambda} = e_{\lambda_1} + \cdots + e_{\lambda_k}$. Write $\bracket{u}{v}$ for the standard inner product of the vectors $u,v\in \RR^n$. The polytope of a $\Delta$-matroid $M$ is 
\begin{equation*}
Q_M =  \conv \set{ e_{\lambda} }{\lambda \in \cB(M)}.
\end{equation*}
An (even) $\Delta$-matroid polytope is any polytope of the form $Q_M$ for some (even) $\Delta$-matroid $M$.

The faces of a polytope $Q\subset \RR^n$ are all of the form 
\begin{equation*}
\face_uQ = \set{x\in Q}{\langle u,x\rangle \leq \langle u,y  \rangle \text{ for all } y \in Q} \hspace{20pt} \text{ where } u\in \RR^n.
\end{equation*}
Because the status of being an even $\Delta$-matroid polytope is determined by its edges (a consequence of the Gelfand-Serganova theorem, see \cite[Theorem~3.5]{Rincon} for a version adapted to $\Delta$-matroids), a face of an even $\Delta$-matroid polytope is an even $\Delta$-matroid polytope. Denote by $M_u$ the even $\Delta$-matroid such that $Q_{M_u} = \face_u Q_M$; this is the analog of the initial matroid defined in \cite{ArdilaKlivans}. The bases of $M_u$ are
\begin{equation}
\label{eq:faceBases}
\cB(M_u) = \set{\lambda \in  \cB(M)}{\bracket{u}{e_{\lambda}} \leq \bracket{u}{e_{\lambda'}} \text{ for all } \lambda'\in \cB(M)}.
\end{equation}

\subsection{$\Delta$-matroid strata}
Given a totally isotropic subspace $F$ of $V$, its $\Delta$-matroid, denoted by $M(F)$, is defined by
\begin{equation}
\label{eq:matroidF}
\cB(M(F)) = \set{ \lambda\subset \bn}{ p_{\overline{\lambda}}(F) \neq 0 }.
\end{equation}
This $\Delta$-matroid is necessarily even. A $\Delta$-matroid is \textit{realizable} if it is the $\Delta$-matroid of a totally isotropic subspace of some $V$.  Let $\bS_M\subset \bS_n^{\pm}$ be the locus of isotropic subspaces that realize $M$. We describe $\bS_M$ as a scheme in the following way. Define
\begin{itemize}[noitemsep]
	\item[-] $B_{M} = \kk[q_{\lambda} \, : \, \lambda \in \cB(M)]$;
	\item[-] $I_M = (\langle q_{\lambda} \, : \, \lambda \in E(n) \setminus \cB(M) \rangle  + I_n)\cap B_M$;
	\item[-] $S_{M}$ the multiplicative semigroup generated by $q_{\lambda}$ for $\lambda \in \cB(M)$. 
\end{itemize}
Then $\bS_M = T_{M} \cap \Proj(B_{M}/I_M)$ where $T_M$ is the dense torus of $\Proj(B_M)$. Frequently it is easier to work with $\Spec(R_M) \cong \bS_{M} \times \GGm$, where $R_{M} = S_{M}^{-1}B_{M}/I_M$.  The ideal $I_M$ is generated by the quadrics 
\begin{equation}
\label{eq:quadricGenerators}
P_M(\mu,\nu) = \sum_{i\in \nu\setminus\mu} (-1)^{\sgn(i;\mu,\nu)} q_{\mu\cup i}q_{\nu\setminus i} + \sum_{j\in \mu\setminus\nu} (-1)^{\sgn(j;\nu,\mu)} q_{\mu\setminus i}q_{\nu\cup j}
\end{equation}
such that $\mu\cup i$, $\mu\setminus j$, $\nu\setminus i$, $\nu\cup j$ are all bases of $M$; compare this with Equation \eqref{eq:quadricGens}.

Similar to the Grassmannian case, the face inclusion $Q_{M_u} \subset Q_M$ induces a morphism of strata $\bS_{M} \to \bS_{M_u}$, as we see in the following proposition.

\begin{proposition}
	\label{prop:mapPlucker} 
	Suppose $M$ is a $\kk$-realizable even $\Delta$-matroid and $u\in \RR^n$. Then the inclusion $B_{M_u}\subset B_{M}$ induces a morphism of strata $\varphi_{M,M_u}:\bS_{M} \to \bS_{M_u}$.	
	These morphisms satisfy $\varphi_{M,M} = \id_{\bS_M}$ and $\varphi_{M,(M_u)_v} = \varphi_{M_u,(M_u)_v} \circ \varphi_{M,M_u}$. 
\end{proposition}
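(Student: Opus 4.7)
The plan is to show that the graded inclusion $B_{M_u}\subset B_M$ descends to an inclusion of ideals $I_{M_u}\subset I_M$; once this is secured, the morphism $\varphi_{M,M_u}$ and the two functoriality identities follow essentially formally.

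First, I would record a key $u$-homogeneity property of the Pfaffian-type quadrics in \eqref{eq:quadricGenerators}. For any $\mu,\nu$ with $n-|\mu|$ and $n-|\nu|$ odd and $|\mu\Delta\nu|\geq 4$, every monomial of $P(\mu,\nu)$ is of the form $q_{\mu\cup i}q_{\nu\setminus i}$ with $i\in\nu\setminus\mu$ or $q_{\mu\setminus j}q_{\nu\cup j}$ with $j\in\mu\setminus\nu$; in both situations the relation $e_{\mu\cup i}+e_{\nu\setminus i}=e_\mu+e_\nu=e_{\mu\setminus j}+e_{\nu\cup j}$ holds. Consequently every monomial of $P(\mu,\nu)$ has the same $u$-weight $W:=\langle u,e_\mu+e_\nu\rangle$.

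Setting $d:=\min_{\lambda\in\cB(M)}\langle u,e_\lambda\rangle$ so that $\cB(M_u)=\{\lambda\in\cB(M):\langle u,e_\lambda\rangle=d\}$ by \eqref{eq:faceBases}, I would split into two cases. If $W=2d$, then any monomial $q_\alpha q_\beta$ of $P_M(\mu,\nu)$ has $\alpha,\beta\in\cB(M)$ with $\langle u,e_\alpha\rangle+\langle u,e_\beta\rangle=2d$; since each summand is at least $d$, both must equal $d$, forcing $\alpha,\beta\in\cB(M_u)$. Thus $P_M(\mu,\nu)=P_{M_u}(\mu,\nu)\in I_M$. If $W>2d$, then by the same weight computation no monomial of $P_M(\mu,\nu)$ can have both indices in $\cB(M_u)$, so $P_{M_u}(\mu,\nu)=0\in I_M$. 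Either way $P_{M_u}(\mu,\nu)\in I_M$ for every admissible $(\mu,\nu)$, so $I_{M_u}\subset I_M$.

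With this inclusion, the graded inclusion $B_{M_u}\hookrightarrow B_M$ descends to a graded ring map $B_{M_u}/I_{M_u}\to B_M/I_M$. Restricting to the dense torus $T_M$---on which every $q_\lambda$ with $\lambda\in\cB(M)$, and in particular every $q_\lambda$ with $\lambda\in\cB(M_u)$, is invertible---produces the morphism $\varphi_{M,M_u}:\bS_M\to\bS_{M_u}$, corresponding on $\Spec R_M\cong\bS_M\times\GGm$ to the localized map $R_{M_u}\to R_M$. The identity $\varphi_{M,M}=\id_{\bS_M}$ is immediate since $M_0=M$ gives the identity inclusion $B_M\subset B_M$, and the composition identity follows by factoring the inclusions $B_{(M_u)_v}\subset B_{M_u}\subset B_M$. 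I expect the main conceptual step to be the weight dichotomy yielding $I_{M_u}\subset I_M$; everything else then reduces to tracking graded ring maps.
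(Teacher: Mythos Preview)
Your proof is correct and follows essentially the same approach as the paper: both reduce to showing that each generator $P_{M_u}(\mu,\nu)$ is either $0$ or equals $P_M(\mu,\nu)$, using the fact that every monomial of $P(\mu,\nu)$ has the same $u$-weight $\langle u,e_\mu+e_\nu\rangle$. Your weight dichotomy on $W$ versus $2d$ is a slightly cleaner packaging of the paper's term-by-term comparison of $\langle u,e_i\rangle$ with $\langle u,e_{i_0}\rangle$, but the underlying idea is identical.
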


\begin{proof}
	It suffices to show that the extension of $I_{M_u}\subset B_{M_u}$ to $B_M$  is contained in $I_M$. Using the quadric generators from \eqref{eq:quadricGenerators}, we must show that  $P_{M_u}(\mu,\nu) = 0$ or $P_{M_u}(\mu,\nu) = P_{M}(\mu,\nu)$. Suppose $P_{M_u}(\mu,\nu) \neq 0$.  Then  there is an $i_0\in \nu\setminus \mu$ such that $\mu\cup i_0$ and $\nu\setminus i_0$ are bases of $M_u$, or there is a $j_0\in \mu\setminus \nu$ such that $\mu\setminus j_0$ and $\nu\cup j_0$ are bases of $M_u$.  The two situations are symmetric, so we only consider the first one.  By \eqref{eq:faceBases}, we have $\bracket{u}{e_{\mu\cup i_0}} = \bracket{u}{e_{\nu\setminus i_0}} \leq \bracket{u}{e_{\lambda}}$ for all $\lambda\in \cB(M)$. We must show that, for all $i\in \nu\setminus \mu$, (resp. $j\in \mu \setminus \nu$), $\mu\cup i$ and $\nu\setminus i$ are bases of $M$ if and only if they are bases of $M_u$ (resp. $\mu\setminus j$ and $\nu\cup j$ are bases of $M$ if and only if they are bases of $M_u$).  Because $\cB(M_u) \subset \cB(M)$, we need only show the ``only if'' directions. Suppose $\mu\cup i$ and $\nu\setminus i$ are bases of $M$. Then  
	\begin{equation*}
	\bracket{u}{e_{\mu}} + \bracket{u}{e_{i_0}} \leq \bracket{u}{e_{\mu}} + \bracket{u}{e_{i}},  \hspace{10pt}\text{ and } \hspace{10pt} \bracket{u}{e_{\nu}} - \bracket{u}{e_{i_0}} \leq \bracket{u}{e_{\nu}} - \bracket{u}{e_{i}},
	\end{equation*}
	so $\bracket{u}{e_{i}}=\bracket{u}{e_{i_0}}$. Therefore $\bracket{u}{e_{\mu\cup i_0}} = \bracket{u}{e_{\mu\cup i}}$ and $\bracket{u}{e_{\nu\setminus i_0}} = \bracket{u}{e_{\nu\setminus i}}$, that is, $\mu\cup i$ and $\nu\setminus i$ are bases of $M_u$. Now suppose $\mu\setminus j$ and $\nu\cup j$ are bases of $M$. Then  
	\begin{equation*}
	\bracket{u}{e_{\mu}} + \bracket{u}{e_{i_0}} \leq \bracket{u}{e_{\mu}} -  \bracket{u}{e_{j}} ,  \hspace{10pt}\text{ and} \hspace{10pt} \bracket{u}{e_{\nu}} -  \bracket{u}{e_{i_0}}  \leq \bracket{u}{e_{\nu}} +  \bracket{u}{e_{j}},
	\end{equation*}
	so $ \bracket{u}{e_{j}} =-\bracket{u}{e_{i_0}}$. Therefore $\bracket{u}{e_{\mu\cup i_0}} = \bracket{u}{e_{\mu\setminus j}}$ and $\bracket{u}{e_{\nu\setminus i_0}} = \bracket{u}{e_{\nu\cup j}}$, that is,  $\mu\setminus j$ and $\nu\cup j$ are bases of $M_u$, as required. 
	That these morphisms satisfy the requisite functorial properties is clear. 
\end{proof}

\subsection{Symmetries of $\Delta$-matroids}
\label{sec:symmetries}
Given a set $X$, denote by $\Sn{X}$ the symmetric group on $X$; if $X=\bn$, we write $\Sn{n} := \Sn{\bn}$.  For $\tau\in \Sn{n}$, let $s_{\tau}\in \Sn{E(n)}$ be the permutation 
\begin{equation*}
s_{\tau}:E(n) \to E(n) \hspace{20pt} \lambda \mapsto \set{\tau(i)}{i\in \lambda}.
\end{equation*}
If $\mu\subset \bn$ has an even number of elements, we have a permutation of $E(n)$ given by
\begin{equation*}
t_{\mu}:E(n) \to E(n) \hspace{20pt} \lambda \mapsto \mu \Delta \lambda  
\end{equation*}  
Let $G_n = \{  t_{\mu} \, : \, \mu \subset \bn,\, |\mu| \text{ even}   \} \subset \Sn{E(n)}$; this is a group isomorphic to $(\Sn{2})^{n-1}$. Following  \cite[VI.4.8]{Bourbaki02}, the type-$D_n$ Weyl group, denoted by $W(D_n)$, is
\begin{equation*}
W(D_n) = \langle s_{\tau}, \, t_{\mu} \, : \, \tau\in \Sn{n},\, \mu \subset \bn,\, |\mu| \text{ even}   \rangle \leq \Sn{E(n)}.
\end{equation*}
The subgroup $G_n$ of $W(D_n)$ is normal since 
\begin{equation*}
\label{eq:conjugationst}
s_{\tau} t_{\mu} s_{\tau}^{-1} = t_{s_{\tau}(\mu) }. 
\end{equation*}	
From this, we see that $W(D_{n}) \cong \Sn{n} \ltimes (\Sn{2})^{n-1}$ and $|W(D_n)| = n!2^{n-1}$. 

The action of $W(D_n)$ on $E(n)$ induces an action on the set of subsets of $E(n)$. Thus $W(D_n)$ acts on the set of even $\Delta$-matroids via its action on the bases sets. For example, given an even $\Delta$-matroid $M$ and an even-sized subset $\lambda\subset [n]$, the \textit{twist} of $M$ by $\lambda$ is the even $\Delta$-matroid with bases $\cB(M\Delta\lambda) = \set{\mu\Delta\lambda}{\mu \in \cB(M)}$;  this is just the action of $t_{\lambda}\in W(D_n)$ on $\cB(M)\in \cP(E(n))$.

\section{Limits of Spinor strata}

\subsection{The tropical spinor variety}
\label{sec:tropSpinor}

We recall tropicalization of embedded varieties from the initial degenerations viewpoint; for details see \cite[\S\S 2.4-5]{MaclaganSturmfels2015}.     Let $\KK$ be a field with valuation $\val$, uniformizing parameter $t$, and residue field $\kk$  (e.g., $\KK = \kk(\!(t^{\RR})\!)$). The valuation plays a significant role in \S \ref{sec:symmetricMatroids}; the reader interested only in $\bS_n$ may assume that the valuation is trivial. Let  $\PP^a = \Proj(\KK[x_0,\ldots,x_a])$, $T\subset \PP^a$ the dense torus, and $N$ the cocharacter lattice of $T$. Via the coordinates on $\PP^a$, we identify $N$ with $\ZZ^{a+1}/\zspan{(1,\ldots,1)}$. Let $X\subset \PP^a$ be a closed irreducible subvariety not contained in any coordinate hyperplane, $I\subset \KK[x_0,\ldots,x_a]$ its homogeneous ideal, and $X^{\circ}=X\cap T$. If $z = (z_{0},\ldots, z_{a})\in \ZZ^{a+1}$, we write $x^{z} = x^{z_0}\cdots x^{z_a}$. Given $w\in N_{\RR}:= N\otimes_{\ZZ} \RR$, the \textit{$w$-initial form} of $f\in \KK[x_0,\ldots,x_a]$ is
\begin{equation*}
\init_w f = \sum_{\bracket{w}{u} + \val(c_{u}) = W } \overline{t^{-\val(c_u)} c_{u}} x^{u}  \in \kk[x_{0},\ldots,x_{a}]   
\end{equation*}
where $f = \sum c_{u} x^{u}$ and $W = \min\{\bracket{w}{u} + \val(c_{u}) \, : \, c_{u}\neq 0 \}$. The \textit{initial ideal} of $I$ is the homogeneous ideal  
\begin{equation*}
\init_wI = \set{\init_w f}{f\in I} \subset \kk[x_{0}, \ldots, x_{a}].
\end{equation*}
The \textit{tropicalization} of $X^{\circ}$ is 
\begin{equation*}
\Trop X^{\circ} = \set{w\in N_{\RR}}{\init_wI_0 \neq \langle 1 \rangle}.
\end{equation*}
This set is the support of a rational polyhedral complex $\Sigma_{\Gr}$, called the \textit{Gr\"obner complex}, where $w,w'$ belong to the same relatively open cell of $\Sigma_{\Gr}$ if and only if $\init_wI = \init_{w'}I$. If $\KK=\kk$, then $\Sigma_{\Gr}$ is a polyhedral fan. The \textit{initial degeneration} of $X^{\circ}$ (with respect to $w\in \Trop X^{\circ}$) is the scheme
\begin{equation*}
\init_wX^{\circ} = T \cap \Proj(\kk[x_0,\ldots,x_a] / \init_wI).
\end{equation*}
Frequently, it is easier to work with
\begin{equation*}
\Spec(\kk[x_0^{\pm},\ldots,x_a^{\pm}] / \init_wI \cdot \kk[x_0^{\pm},\ldots,x_a^{\pm}]). 
\end{equation*} 
which is isomorphic to $\init_wX^{\circ} \times \GGm$. We remark that $\init_wX^{\circ}$ depends on the cone of $\Sigma_{\Gr}$ that contains $w$ in its relative interior, but there may exist $w,w'$ belonging to different locally closed cones such that $\init_w X^{\circ} = \init_{w'} X^{\circ}$. When this happens, $\init_wI$ and  $\init_{w'}I$ differ by primary components contained in $\langle x_0,\ldots, x_a \rangle$. 

We now specialize to the spinor variety.  Let $n\geq 3$ and $\bS_n^{\circ}$ denote the intersection of $\bS_n$ with the dense torus in $\PP(\kk^{E(n)})$.  Note that $\bS_{n}^{\circ} = \bS_M$ where $M$ is the uniform even $\Delta$-matroid, i.e., $\pB(M) = E(n)$. Viewing $\bS_M$ as a closed subvariety of the algebraic torus $\GG_m^{\pB(M)} / \GG_m$, we may form
\begin{equation*}
	\TS_{M} := \Trop(\bS_M),
	\hspace{20pt} \text{and} \hspace{20pt}
	\TS_{n}^{\circ} := \Trop(\bS_n^{\circ})
\end{equation*}
Label the coordinates of $N_{\RR} \cong \RR^{E(n)}/\rspan{(1,\ldots,1)}$ by $f_{\lambda}$ for $\lambda\in E(n)$. The tropicalization $\TS_n^{\circ}$ is the support of a ${n \choose 2}$-dimensional polyhedral fan. 

Consider the $L \cong \ZZ^{n}$-grading $\deg_L(q_{\lambda}) = \sum_{i \in \lambda} e_i$. The following proposition is clear from the quadric generators of $I_n$ \eqref{eq:quadricGens}.
\begin{proposition}
\label{prop:InHomogeneous}
	The ideal $I_{n}$ is homogeneous with respect to the $L$-grading. 
\end{proposition}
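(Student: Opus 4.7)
The plan is to verify that each quadric generator $P(\mu,\nu)$ from \eqref{eq:quadricGens} is itself $L$-homogeneous; since a homogeneous ideal is one generated by homogeneous elements, this will suffice. Concretely, I will compute the $L$-degree of every monomial $q_\eta q_\lambda$ appearing in $P(\mu,\nu)$ and check that all such degrees coincide with $e_\mu + e_\nu$, independently of which term of either sum the monomial came from.

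First I would handle the first sum, indexed by $i\in\nu\setminus\mu$. Since $i\notin\mu$ we have $e_{\mu\cup i} = e_\mu + e_i$, and since $i\in\nu$ we have $e_{\nu\setminus i} = e_\nu - e_i$; adding these gives $\deg_L(q_{\mu\cup i}q_{\nu\setminus i}) = e_\mu + e_\nu$. Next I would handle the second sum, indexed by $j\in\mu\setminus\nu$. Since $j\in\mu$ and $j\notin\nu$ we get $e_{\mu\setminus j} = e_\mu - e_j$ and $e_{\nu\cup j} = e_\nu + e_j$, whose sum is again $e_\mu + e_\nu$. Hence every monomial appearing in $P(\mu,\nu)$ has $L$-degree $e_\mu+e_\nu$, so $P(\mu,\nu)$ is $L$-homogeneous.

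Since $I_n$ is generated by the $P(\mu,\nu)$ and each generator is $L$-homogeneous, the ideal $I_n$ is homogeneous with respect to the $L$-grading, as claimed. There is no real obstacle here; the statement is a direct consequence of the combinatorial shape of the quadric generators, and the only thing to be careful about is keeping track of the set-theoretic conditions $i\in\nu\setminus\mu$ and $j\in\mu\setminus\nu$ when computing the indicator vectors $e_{\mu\cup i}$, $e_{\nu\setminus i}$, $e_{\mu\setminus j}$, $e_{\nu\cup j}$.
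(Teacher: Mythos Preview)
Your proof is correct and follows the same approach the paper takes: the paper simply states that the proposition is clear from the quadric generators $P(\mu,\nu)$ in \eqref{eq:quadricGens}, and you have spelled out exactly why each such generator is $L$-homogeneous of degree $e_\mu+e_\nu$.
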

\noindent Therefore, $\TS_n^{\circ}$ has a $n$-dimensional lineality space $L_{\RR}\subset N_{\RR}$ where
\begin{equation}
\label{eq:Lineality}
L = \left\langle \sum_{\lambda \ni i} f_{\lambda}, \; \sum_{\lambda \not\ni i} f_{\lambda} \, : \, 0 \leq i \leq  n-1\right\rangle \subset N.
\end{equation} 
Note that $L$ is a \textit{saturated} subgroup of $N$, so $N/L$ is torsion-free. Let us describe $\TS_n^{\circ}$ for small values of $n$. To simplify our description of $\TS_{n}^{\circ}$, we list the $W(D_n)$-orbits of the cones, where  $W(D_n)$  acts on $\TS_n^{\circ}$ by 
\begin{equation*}
s_{\tau}\cdot f_{\lambda} = f_{s_{\tau}(\lambda)}, \hspace{20pt} t_{\mu}\cdot f_{\lambda} = f_{t_{\mu}(\lambda)}.
\end{equation*}

When $n=3$, $\TS_3^{\circ} = L_{\RR} = N_{\RR}$. Next, consider the case $n=4$. The saturated subgroup $L\subset N$ is spanned by
{\small
\begin{align*}
\begin{array}{cccc}
f_{01}+f_{02}+f_{03}+f_{0123}, & f_{01} + f_{12} + f_{13}+f_{0123}, & f_{02} + f_{12} + f_{23}+f_{0123}, & f_{03}+f_{13} + f_{23}+f_{0123}, \\
f_{\emptyset}+f_{01}+f_{02}+f_{12}, & f_{\emptyset}+f_{01} + f_{03} + f_{13}, & f_{\emptyset}+f_{02} + f_{03} + f_{23}, & f_{\emptyset}+f_{12}+f_{13}+f_{23}.
\end{array}
\end{align*} 
}
There are 4 rays, which have primitive vectors (modulo $L_{\RR}$)
\begin{equation}
\label{eq:rays4}
r_0 = f_{\emptyset} + f_{0123},\, r_1 = f_{01} + f_{23},\, r_2 = f_{02} + f_{13},\, r_3 = f_{03} + f_{12},
\end{equation}
and 6 maximal cones, one corresponding to each pair of rays. Up to $W(D_4)$-symmetry, there is only one ray and one maximal cone. The space $\TS_5^{\circ}$ is described in \S \ref{sec:chow}. In summary, it has a $5$ dimensional lineality space, and $f$-vector (resp. $W(D_5)$-symmetric $f$-vector):
\begin{equation*}
f(\TS_5^{\circ}) = (1, 36, 280, 960, 1540, 912), \hspace{20pt} \text{resp.} \hspace{20pt} f(\TS_5^{\circ} \mod W(D_5)) = (1, 2, 3, 5, 5, 4).
\end{equation*}

\subsection{Subdivisions of $\Delta$-matroid polytopes}

Given a polytope $Q\subset \RR^n$ with vertices $v_{0},\ldots,v_k$ and $w\in \RR^{k+1}$, the \textit{lifted polytope} is
\begin{equation*} 
Q^{w} = \conv\set{(v_i, w_{i})}{0\leq i\leq k} \subset \RR^{n} \times \RR.
\end{equation*}
Any lower face of $Q^{w}$ is of the form $\face_{\mathbf{u}}Q^w$ where $\mathbf{u} = (u,1)\in \RR^{n}\times \RR$.  The lower faces of $Q^{w}$ project onto $Q$, forming a polyhedral complex  whose support is $Q$. This is called the \textit{regular subdivision} of $Q$ induced by $w$. The \textit{adjacency graph} of this subdivision is the graph with vertex $v_{Q_i}$ for each maximal cell $Q_i$ and an edge between $v_{Q_i}$ and $v_{Q_j}$ whenever $Q_i$ and $Q_j$ share a common facet. 
The \textit{secondary fan}  $\SF{Q}$ of $Q$ is the complete fan in $\RR^{k+1}$ where $w$ and $w'$ belong to the relative interior of the same cone if and only if they induce the same regular subdivision on $Q$ \cite[\S 7.C]{GelfandKapranovZelevinsky}.

Given an even $\Delta$-matroid $M$ and $w\in \RR^{\cB(M)}$, we write $\Delta_{M,w}$ for the regular subdivision of $Q_M$ induced by $w$.  This subdivision is \textit{matroidal}, or $\Delta_{M,w}$ is a \textit{matroid subdivision},  if  each $Q\in\Delta_{M,w}$ is an even $\Delta$-matroid polytope. The \textit{$\Delta$-Dressian} of $M$ is the subfan of $\SF{Q_M}$ defined by
\begin{equation*}
\Dr_M = \set{w\in \RR^{\cB(M)}}{\Delta_{M,w} \text{ is matroidal} }.
\end{equation*}
When $\cB(M)=E(n)$, we write $Q_M=\Delta(n)$, $\Dr_M = \Dr(n)$, and $\Sigma_{S}(Q_{M}) = \Sigma_n$. 

Suppose $\Delta_{M,w}$ is matroidal. Given $u \in \RR^n$, let  $M_{u}^{w}$ be the matroid of the polytope  in $\Delta_{M,w}$ determined by $\face_{\mathbf{u}}(Q_M^{w})$. The bases of $M_{u}^{w}$ are
\begin{equation}
\label{eq:basesCell}
\cB(M_{u}^{w}) = \set{\lambda\in \cB(M)}{ w_{\lambda} + \bracket{u}{e_{\lambda}}  \leq w_{\lambda'} + \bracket{u}{e_{\lambda'}} \text{ for all } \lambda' \in \cB(M) }.
\end{equation}
If $w\in \TS_M$, then $\Delta_{M,w}$ is matroidal \cite[Theorem~5.4]{Rincon}. In fact, for $n\leq 5$, there is an equality of fans $\TS_n^{\circ} = \Dr(n)$,  and for $n\geq 7$, $\Dr(n)$ is strictly larger than $\TS_n^{\circ}$ by [\textit{loc. cit.} Theorem~4.5].

\subsection{Finite limits of strata}
We consider the following partial order of $\Delta$-matroids: $M'\leq M$ means that $Q_{M'}$ is a face of $Q_M$, and $M'\lessdot M$ means that $Q_{M'}$ is a facet of $Q_M$. We may view, for $w\in \Dr_M$, the polyhedral complex $\Delta_{M,w}$ as a finite poset, and hence we may form the inverse limit 
\begin{equation}
\label{eq:limitStrata}
\bS_{M,w} := \varprojlim_{Q\in \Delta_{M,w}} \bS_{M_Q}.
\end{equation}
If $\cB(M) = E(n)$, then we write $\bS_w$ for this limit. Finite inverse limits exist in the category of affine $\kk$-schemes because this category has a terminal object and pullbacks \cite[Proposition~5.21]{Awodey}. In fact, since $\Spec$ is left-adjoint to the global sections functor, it takes direct limits to inverse limits, so the affine coordinate ring of $\bS_{M,w}$ is the direct limit of the affine coordinate rings of the $\bS_{M}$'s.

\begin{proposition}
	\label{prop:mapInitToStratum}
	For any $w\in \TS_M$ and $u\in \RR^n$, the inclusion $B_{M_{u}^{w}}\subset B_{M}$ induces a morphism $\psi_{M,M_{u}^{w}}:\init_w\bS_M \to \bS_{M_{u}^{w}}$.  
\end{proposition}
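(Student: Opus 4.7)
The strategy is to work at the level of affine models. Writing $R_{M_u^w}=S_{M_u^w}^{-1}B_{M_u^w}/I_{M_u^w}$ and $S_M^{-1}B_M/\init_wI_M$ for the coordinate rings of (the $\GGm$-torsors over) $\bS_{M_u^w}$ and $\init_w\bS_M$, respectively, the assignment $q_\lambda\mapsto q_\lambda$ for $\lambda\in\cB(M_u^w)\subset\cB(M)$ defines a candidate ring homomorphism $R_{M_u^w}\to S_M^{-1}B_M/\init_wI_M$; since $S_{M_u^w}\subset S_M$, localization poses no problem. The desired morphism $\psi_{M,M_u^w}$ exists exactly when this map is well-defined, i.e., when the extension of $I_{M_u^w}$ to $B_M$ lands in $\init_wI_M$.

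Using the Pfaffian generators \eqref{eq:quadricGenerators}, this reduces to showing that each quadric $P_{M_u^w}(\mu,\nu)$ lies in $\init_wI_M$, and I claim that whenever $P_{M_u^w}(\mu,\nu)\neq 0$,
\begin{equation*}
P_{M_u^w}(\mu,\nu) \;=\; \init_w P_M(\mu,\nu) \qquad\text{as elements of }B_M,
\end{equation*}
which immediately puts it in $\init_wI_M$. The key input is Proposition \ref{prop:InHomogeneous}: every monomial $q_{\lambda_1}q_{\lambda_2}$ occurring in $P_M(\mu,\nu)$ satisfies $e_{\lambda_1}+e_{\lambda_2}=e_\mu+e_\nu$, so the quantity $\bracket{u}{e_{\lambda_1}}+\bracket{u}{e_{\lambda_2}}$ is the constant $\bracket{u}{e_\mu+e_\nu}$ across the entire quadric. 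Setting $W=\min_{\lambda\in\cB(M)}(w_\lambda+\bracket{u}{e_\lambda})$, I can rewrite
\begin{equation*}
w_{\lambda_1}+w_{\lambda_2} \;=\; \bigl(w_{\lambda_1}+\bracket{u}{e_{\lambda_1}}\bigr)+\bigl(w_{\lambda_2}+\bracket{u}{e_{\lambda_2}}\bigr)-\bracket{u}{e_\mu+e_\nu} \;\geq\; 2W-\bracket{u}{e_\mu+e_\nu},
\end{equation*}
with equality exactly when both $\lambda_1,\lambda_2\in\cB(M_u^w)$ by \eqref{eq:basesCell}. Consequently the $w$-minimal monomials of $P_M(\mu,\nu)$ are precisely those whose two indices are bases of $M_u^w$, and these are exactly the monomials that survive in $P_{M_u^w}(\mu,\nu)$. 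Because the coefficients are $\pm 1$ and the remark following \eqref{eq:quadricGens} forbids any monomial from appearing in both sums of $P_M(\mu,\nu)$, the signs $(-1)^{\sgn(\cdot;\mu,\nu)}$ match term-by-term without cancellation.

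The main subtle point is reconciling $\init_w$, which sees only the weight $w$, with $\cB(M_u^w)$, which is defined using the shifted weight $w+u$. The $L$-homogeneity of each Pfaffian quadric resolves this tension: the $u$-shift is constant on the support of any single $P_M(\mu,\nu)$, so adding $\bracket{u}{e_\lambda}$ to $w_\lambda$ changes nothing at the level of $\init_w P_M(\mu,\nu)$ yet upgrades the local minimization within the quadric to the global minimization that characterizes $\cB(M_u^w)$. Once the identity $\init_w P_M(\mu,\nu)=P_{M_u^w}(\mu,\nu)$ is in hand, the required ideal containment, and hence the morphism $\psi_{M,M_u^w}$, are immediate.
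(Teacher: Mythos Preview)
Your proof is correct and follows essentially the same approach as the paper: both arguments reduce to showing $P_{M_u^w}(\mu,\nu)=\init_w P_M(\mu,\nu)$ when the left side is nonzero, and both exploit that the $u$-shift $\bracket{u}{e_{\lambda_1}}+\bracket{u}{e_{\lambda_2}}$ is constant across the monomials of a single quadric, so minimizing $w$ coincides with minimizing $w+\bracket{u}{\cdot}$. Your packaging via the global lower bound $2W-\bracket{u}{e_\mu+e_\nu}$ is a clean variant of the paper's anchor-point comparison \eqref{eq:uw}, but the substance is identical.
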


\begin{proof}
We must show that the extension of $I_{M_{u}^{w}}$ to $B_{M}$ is contained in $\init_wI_M$. We use the quadric generators listed in Equation \eqref{eq:quadricGenerators}. We must show that $P_{M_{u}^{w}}(\mu,\nu) = 0$ or $P_{M_{u}^{w}}(\mu,\nu) = \init_wP_{M}(\mu,\nu)$. 

Suppose $P_{M_{u}^{w}}(\mu,\nu) \neq 0$.  Then either there is an $i_0\in \nu\setminus \mu$ such that $\mu\cup i_0$ and $\nu\setminus i_0$ are bases of $M_{u}^{w}$, or there is a $j_0\in \mu\setminus \nu$ such that $\mu\setminus j_0$ and $\nu\cup j_0$ are bases of $M_{u}^{w}$. The two situations are symmetric, so it suffices to consider the first one.  We must show that $q_{\mu\cup i}q_{\nu\setminus i}$ is a monomial of $\init_wP_{M}(\mu,\nu)$ if and only if $\mu\cup i$ and $ \nu\setminus i$ are bases of $M_{u}^{w}$ (resp. $q_{\mu\setminus j}q_{\nu\cup j}$ is a monomial of $\init_wP_{M}(\mu,\nu)$ if and only if $\mu\setminus j$ and $\nu\cup j$ are bases of $M_{u}^{w}$). 

Let $v_{\lambda} = w_{\lambda} + \bracket{u}{e_{\lambda}}$. Observe that, for any $i,j \in \nu\setminus \mu$, 
\begin{equation}
\label{eq:uw}
v_{\mu\cup j}  + v_{\nu\setminus j} -  v_{\mu\cup i}  - v_{\nu\setminus i} =  w_{\mu\cup j}  + w_{\nu\setminus j} - w_{\mu\cup i} - w_{\nu\setminus i}.
\end{equation}
Recall from the discussion after Equation \eqref{eq:quadricGens} that, since $|\mu\Delta\nu|\geq 4$, a monomial $q_{\eta}q_{\lambda}$ cannot appear in both sums in Equation \eqref{eq:quadricGenerators}, i.e., we need not be concerned with cancellations between the two sums. Let $q_{\mu\cup i}q_{\nu\setminus i}$ be a monomial in $P_{M}(\mu,\nu)$.  The term $q_{\mu\cup i}q_{\nu\setminus i}$ is a monomial of $\init_wP_{M}(\mu,\nu)$ if and only if 
\begin{align*}
&w_{\mu\cup i}  + w_{\nu\setminus i} \leq  w_{\mu\cup i'}  + w_{\nu\setminus i'} \hspace{10pt}\text{ and} \\
&w_{\mu\cup i}  + w_{\nu\setminus i} \leq  w_{\mu\setminus j'}  + w_{\nu\cup j'}
\end{align*}
for all $i',j'$, if and only if 
\begin{align*}
&v_{\mu\cup i}  + v_{\nu\setminus i} \leq  v_{\mu\cup i'}  + v_{\nu\setminus i'} \hspace{10pt}\text{ and} \\
&v_{\mu\cup i}  + v_{\nu\setminus i} \leq  v_{\mu\setminus j'}  + v_{\nu\cup j'}
\end{align*}
for all $i',j'$, if and only if $v_{\mu\cup i}  + v_{\nu\setminus i} = v_{\mu\cup i_0}  + v_{\nu\setminus i_0}$, if and only if $\mu\cup i$ and $\nu \setminus i$ are bases of $M_{u}^{w}$. A similar argument shows that, if $q_{\mu\setminus j}q_{\nu\cup j}$ is a monomial of $P_{M}(\mu,\nu)$, then   $q_{\mu\setminus j}q_{\nu\cup j}$ is a monomial of $\init_wP_{M}(\mu,\nu)$ if and only if $\mu\setminus j$ and $\nu\cup j$ are bases of $M_{u}^{w}$.
\end{proof}

\begin{theorem}
	\label{thm:closedImmersion}
	For any $w\in \TS_M$ the morphisms $\psi_{M,M_{u}^{w}}$ induce a closed immersion
	\begin{equation*}
	\psi_{M,w}:\init_w\bS_M \hookrightarrow \bS_{M,w}.
	\end{equation*}  
\end{theorem}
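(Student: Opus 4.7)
The plan is to verify that the morphisms $\psi_{M,M_u^w}$ from Proposition \ref{prop:mapInitToStratum} are compatible with the face morphisms $\varphi_{M_Q,M_{Q'}}$ from Proposition \ref{prop:mapPlucker}, so that the universal property of $\bS_{M,w} = \varprojlim_{Q\in\Delta_{M,w}} \bS_{M_Q}$ produces $\psi_{M,w}$. Then, after passing to affine charts, I would establish the closed-immersion property by showing surjectivity of the corresponding map of coordinate rings; this in turn reduces to the elementary combinatorial fact that every vertex of $Q_M$ lies in some cell of $\Delta_{M,w}$.

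For compatibility, fix $Q' \leq Q$ in $\Delta_{M,w}$. On coordinate rings both $\varphi_{M_Q,M_{Q'}} \circ \psi_{M,M_Q}$ and $\psi_{M,M_{Q'}}$ are induced by the chain of inclusions $B_{M_{Q'}} \hookrightarrow B_{M_Q} \hookrightarrow B_M$ followed by reduction modulo $\init_w I_M$; Proposition \ref{prop:mapInitToStratum} guarantees that the reduction is well-defined at each stage, so the two maps agree. The family $\{\psi_{M,M_Q}\}$ thus descends to a single morphism $\psi_{M,w} : \init_w\bS_M \to \bS_{M,w}$.

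To show $\psi_{M,w}$ is a closed immersion, I would work with the affine charts $\Spec(S_M^{-1} B_M / \init_w I_M) \cong \init_w\bS_M \times \GGm$ and $\Spec R_{M_Q} \cong \bS_{M_Q}\times \GGm$. As the paper observes, $\Spec$ sends direct limits to inverse limits, so $\bS_{M,w} \times \GGm \cong \Spec R$ where $R := \varinjlim_{Q\in\Delta_{M,w}} R_{M_Q}$ is the colimit along the inclusions $R_{M_{Q'}} \hookrightarrow R_{M_Q}$ for $Q'\leq Q$. A closed immersion of affine schemes amounts to surjectivity of the induced ring map $\alpha : R \to S_M^{-1} B_M / \init_w I_M$. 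The target is generated over $\kk$ by the images of $q_\lambda^{\pm 1}$ for $\lambda \in \cB(M)$; for each such $\lambda$, the vertex $e_\lambda$ of $Q_M$ belongs to some cell $Q\in \Delta_{M,w}$, so $\lambda \in \cB(M_Q)$ and hence $q_\lambda^{\pm 1} \in R_{M_Q} \subset R$ maps to $q_\lambda^{\pm 1}$ under $\alpha$. This covers all generators, so $\alpha$ is surjective and $\psi_{M,w}$ is a closed immersion (the $\GGm$-factor on both sides is canonical, so descending from the affine version back to $\init_w\bS_M \hookrightarrow \bS_{M,w}$ is formal).

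The main subtlety is organizational rather than substantive: one must keep straight the distinction between the quasi-affine strata $\bS_{M_Q}$ and their $\GGm$-bundled affine counterparts $\Spec R_{M_Q}$, and use the identification $\bS_{M,w}\times \GGm \cong \Spec(\varinjlim R_{M_Q})$ coming from the left-adjointness of $\Spec$. The real combinatorial work—showing that the ring inclusions induce morphisms of strata in the presence of the Wick quadrics—has already been carried out in Propositions \ref{prop:mapPlucker} and \ref{prop:mapInitToStratum}; once that machinery is in place the only remaining geometric input is the trivial observation that the cells of $\Delta_{M,w}$ cover $Q_M$.
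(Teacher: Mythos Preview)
Your proposal is correct and follows essentially the same approach as the paper: verify compatibility of the $\psi_{M,M_u^w}$ with the face morphisms to obtain $\psi_{M,w}$ via the universal property, then deduce the closed-immersion property from surjectivity of the induced ring map $\varinjlim_{Q\in\Delta_{M,w}} R_{M_Q} \to S_M^{-1}B_M/\init_w I_M$. The paper's proof is extremely terse (it asserts compatibility is ``clear'' and surjectivity without elaboration), whereas you spell out explicitly that surjectivity follows because every vertex $e_\lambda$ of $Q_M$ lies in some cell of $\Delta_{M,w}$---this is exactly the right reason, and your added detail is helpful rather than divergent.
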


\begin{proof}
Clearly $\varphi_{M_{u}^{w},(M_{u}^{w})_{v}} \circ \psi_{M,M_{u}^{w}} = \psi_{M,(M_{u}^{w})_v}$, so $\psi_{M,w}$ is defined by the universal property of inverse limits. For each $q_{\lambda}$ for $\lambda \in \pB(M)$, there is a $Q\in \Delta_{M,w}$ such that $\lambda \in \pB(M_Q)$, in which case
\begin{equation*}
\psi_{M,w}^{\# }(q_{\lambda}) = \psi_{M,M_Q}^{\# }(q_{\lambda}) = q_{\lambda}.
\end{equation*} 
So the morphism 
$\psi_{M,w}^{\#}: \varinjlim_{Q\in \Delta_{M,w}} R_{M_Q} \to S_{M}^{-1} B_{M}/\init_wI_M$ is surjective, and therefore $\psi_{M,w}$ is a closed immersion. 
\end{proof}

\section{Symmetric matroids and initial degenerations of totally isotropic subspaces}
\label{sec:symmetricMatroids}

In this section, we provide the geometric description of Theorem \ref{thm:closedImmersion} as described in the introduction. This requires a notion of circuits for $\Delta$-matroids, which are better understood in the language of symmetric matroids. The reader interested in the applications of Theorem \ref{thm:closedImmersion} may skip to the next section. 

Let $J=\bn\sqcup \bn^*$. Define an involution $J\to J$ by $i\mapsto i^*$  and $(i^*)^*=i$ for $i\in \bn$. Given $\lambda \subset J$, let $\lambda^*=\set{i^*}{i\in \lambda}$. The subset $\lambda$ is \textit{admissible} if $\lambda \cap \lambda^* = \emptyset$, and is a \textit{transversal} if, additionally, $|\lambda|=n$. A \textit{symmetric matroid} $M$ is determined by a nonempty set of transversals $\cB(M)$ satisfying the \textit{symmetric exchange axiom}: for every $\mu,\nu\in \cB(M)$ and $i\in \mu \Delta \nu$, there is a $j\in \mu\Delta\nu$ such that $\mu\Delta ii^*jj^* \in \cB(M)$ (compare this to the symmetric exchange axiom used to define $\Delta$-matroids). An element of  $\cB(M)$ is called  a \textit{basis} of $M$. 

Symmetric matroids go by many different names in the literature, see \cite[\S 4]{BorovikGelfandWhite} and the references therein.  The data of a symmetric matroid on $J$ is also equivalent to the data of a $\Delta$-matroid on $\bn$. Given a $\Delta$-matroid $M$, its associated symmetric matroid $\overline{M}$ is defined by
\begin{equation*}
\cB(\overline{M}) = \set{\overline{\mu}}{\mu \in \cB(M)}.
\end{equation*}
Conversely, given a symmetric matroid $M$, its associated $\Delta$-matroid $\underline{M}$ is defined by 
\begin{equation*}
\cB(\underline{M}) = \set{\lambda \cap \bn}{\lambda\in \cB(M)}.
\end{equation*}

Let $M$ be a symmetric matroid on $J$. A subset of $J$  is \textit{independent} if it is contained in a basis, and \textit{dependent} otherwise.  A \textit{circuit} is an admissible minimal dependent subset, and the set of all circuits of $M$ is denoted by $\cC(M)$. We use a similar notation for ordinary matroids, i.e., if $M$ is an ordinary matroid, then we denote by $\cB(M)$ the bases of $M$ and the circuits of $M$ by $\cC(M)$.

Let $F$ be a totally isotropic subspace of $V.$ 
The symmetric matroid of $F$ is $\overline{M(F)}$ where $M(F)$ is the $\Delta$-matroid of $F$ defined in \eqref{eq:matroidF}. Explicitly, 
\begin{equation*}
\cB(\overline{M(F)}) = \set{\lambda \subset J}{\lambda \text{ is a transversal and } p_{\lambda}(F) \neq 0}.
\end{equation*}
The (ordinary) matroid of $F$, denoted by $M_{\A}(F)$, is given by
\begin{equation*}
\cB(M_{\A}(F)) = \set{\lambda\subset J}{p_{\lambda}(F) \neq 0}. 
\end{equation*}
That is, the bases of $\overline{M(F)}$ are just the bases of $M_{\A}(F)$ that are admissible.

Given an $m\times n$ matrix $X$ and subsets $\mu \subset [m]$, $\nu\subset\bn$, denote by $X[\mu,\nu]$ the submatrix of $X$ whose rows are indexed by $\mu$ and columns are indexed by $\nu$. We abbreviate $X[\mu,\mu]$ by $X[\mu]$. 

\begin{lemma}
	\label{lem:Cayley}
	Let $X$ be a skew-symmetric $n\times n$ matrix and let $\lambda$ be a subset of $[n]$ with $k$ elements.  
	Then, for distinct $i,j\in [n]$, we have
\begin{equation*}
\det(X[\lambda\setminus i, \lambda\setminus j]) = \left\{
\begin{array}{ll}
\pm \Pf(X[\lambda \setminus i])\Pf(X[\lambda\setminus j]) & \text{ if } k \text{ is odd}, \\
\pm \Pf(X[\lambda])\Pf(X[\lambda\setminus ij]) & \text{ if } k \text{ is even}.
\end{array}
\right.
\end{equation*}
Additionally, $\det(X[\nu,\mu]) = \pm\det(X[\mu,\nu])$.
\end{lemma}

\begin{proof}
	This was originally proved by Cayley, see \cite[p. 11]{Knuth} and the references therein. 
\end{proof}

\begin{lemma}
	\label{lem:detPfIsotropic}
	Let $X$ be a skew-symmetric $n\times n$ matrix and let $F$ be the totally isotropic subspace given by the row span of $W := [I|X]$. Let $\tau\in \cB(\overline{M(F)})$ such that  $|[n]\setminus \tau|$ is even. Then, for  $i\in \tau$ and $j\notin \tau$, we have
\begin{equation*}
\det(W[\bn,\tau \Delta ij]) = \pm \Pf(X[\bn \setminus \tau]) \Pf(X[\bn\setminus (\tau\Delta ii^*jj^*)]).
\end{equation*}
\end{lemma}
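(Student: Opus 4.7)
The plan is to reduce $\det(W[\bn, \tau \Delta ij])$ to the determinant of a submatrix of $X$ via cofactor expansion, and then apply Cayley's identity (Lemma \ref{lem:Cayley}) to rewrite it as a product of two Pfaffians matching the right-hand side. Set $\mu = \tau \cap \bn$ and $\lambda = \bn \setminus \tau$, so that $\tau = \mu \cup \lambda^*$ and $|\lambda|$ is even by hypothesis. Since $W = [I_n | X]$, the column of $W$ indexed by $l \in \bn$ is the standard basis vector $e_l$, while the column indexed by $l^* \in \bn^*$ is the $l$-th column of $X$. Hence the columns of $W$ indexed by $\tau \Delta ij$ split into basis columns (those in $\bn$) and $X$-columns (those in $\bn^*$), and cofactor expansion along the basis columns reduces $\det(W[\bn, \tau \Delta ij])$ to $\pm \det(X[R, C])$ for specific $R, C \subset \bn$ of equal cardinality differing in at most two elements.

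I first treat the case $j \neq i^*$, splitting into four subcases according to where $i$ and $j$ lie in $J = \bn \sqcup \bn^*$. As a representative example, take $i \in \bn$ and $j = k^* \in \bn^*$; then $i, k \in \mu$ and $i \neq k$, and $\tau \Delta ij = (\mu \setminus i) \cup (\lambda \cup \{k\})^*$. The basis columns are indexed by $\mu \setminus i$, yielding $R = \lambda \cup \{i\}$ and $C = \lambda \cup \{k\}$. Setting $\Lambda = \lambda \cup \{i, k\}$ (of even size), we have $R = \Lambda \setminus \{k\}$ and $C = \Lambda \setminus \{i\}$, so Lemma \ref{lem:Cayley} gives $\det(X[R, C]) = \pm \Pf(X[\Lambda]) \Pf(X[\lambda])$. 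One directly checks that $\bn \setminus \tau = \lambda$ and $\bn \setminus (\tau \Delta ii^*jj^*) = \Lambda$, completing this subcase. The remaining subcases ($i, j \in \bn$; $i \in \bn^*$, $j \in \bn$; $i, j \in \bn^*$) follow from the same template, with $\Lambda$ chosen to have even or odd size depending on whether $i$ and $j$ lie in the same half of $J$, and the appropriate branch of Cayley's identity applied.

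In the degenerate case $j = i^*$, both sides of the claimed identity vanish independently. On the left, the reduced submatrix of $X$ becomes the principal submatrix $X[\lambda \cup \{i\}]$, which is skew-symmetric of odd size and hence has determinant $0$. On the right, $\bn \setminus (\tau \Delta ii^*jj^*) = \bn \setminus (\tau \Delta \{i, i^*\}) = \lambda \cup \{i\}$ also has odd size, so the corresponding Pfaffian vanishes by the usual convention. The principal technical issue in the non-degenerate cases is sign bookkeeping---accumulated from the cofactor expansion, column reorderings, and Cayley's identity---but since the identity is stated only up to sign, this is sidestepped, reducing the proof to the routine case-by-case verifications above.
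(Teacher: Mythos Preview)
Your proof is correct and follows essentially the same route as the paper: reduce $\det(W[\bn,\tau\Delta ij])$ to a minor of $X$ by cofactor expansion along the identity columns, then apply Lemma~\ref{lem:Cayley} in a four-case analysis according to whether $i$ and $j$ lie in $\bn$ or $\bn^*$, with the degenerate case $j=i^*$ handled separately by a parity argument. The paper spells out all four cases explicitly (phrasing them as $i,j^*\in\mu$ or not), whereas you treat one case in detail and indicate the analogous template for the rest; the underlying computation is the same.
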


\begin{remark}
	In terms of Pl\"ucker and Wick coordinates, this proposition says that 
	\begin{equation*}
	p_{\tau\Delta ij}(F) = \pm q_{\tau\cap \bn}(F) q_{(\tau\Delta ii^*jj^*)\cap [n]}(F).
	\end{equation*}
\end{remark}

\begin{proof}
	First, observe that if $i=j^*$, then $\det(W[\bn,\tau \Delta ij]) $ and $ \Pf(X[\bn\setminus (\tau\Delta ii^*jj^*)])$ equal 0. Therefore, we assume that $i\neq j^*$. For brevity, let $\nu = \bn \setminus \tau$.
	Before considering the four cases, depending on whether $i$ or $j$ lie in $[n]$ or $[n]^*$, we record some useful formulas:
	\begin{enumerate}
		\item $\det(W[\bn, \sigma]) = \pm \det(X[\bn \setminus \sigma, \bn \cap \sigma^* ])$ for any $\sigma \subset J$ of size $n$. 
		\item $\bn \setminus (\tau\Delta ij) = \nu \cup ([n]\cap ii^*) \setminus jj^*$
		\item $\bn \cap (\tau\Delta ij)^* = \nu \cup ([n]\cap jj^*) \setminus ii^*$
	\end{enumerate} 
   If $i,j\in \bn$, then $i \notin \nu$, $j \in \nu$.  The above formulas and Lemma \ref{lem:Cayley} applied to $\lambda=\nu \cup i$ yield
   \begin{equation*}
   \det(W[\bn, \tau\Delta ij]) = \pm \det(X[\nu \Delta ij, \nu]) = \pm \Pf(X[\nu]) \Pf(X[\nu\Delta ij]).
   \end{equation*}
    If $i,j\in \bn^*$, then  $i^{*} \in \nu$, $j^{*} \notin \nu$ and we have 
   \begin{equation*}
   \det(W[\bn, \tau\Delta ij]) = \pm \det(X[\nu, \nu \Delta i^*j^*) ])  =\pm  \Pf(X[\nu]) \Pf(X[\nu\Delta i^*j^*)]).
   \end{equation*}
    If $i \in [n]$ and $j \in [n]^*$, then $i \notin \nu$, $j^* \notin \nu$ and we have 
   \begin{equation*}
   \det(W[\bn, \tau\Delta ij]) = \pm \det(X[\nu \cup i, \nu \cup j^*]) = \pm \Pf(X[\nu]) \Pf(X[\nu \cup ij^*]) 
   \end{equation*}
   If $i \in [n]^*$ and $j \in [n]$, then $i^* \in \nu$, $j \in \nu$ and we have 
   \begin{equation*}
   \det(W[\bn, \tau\Delta ij]) = \pm \det(X[\nu \setminus j, \nu \setminus i^*]) = \pm \Pf(X[\nu]) \Pf(X[\nu \setminus i^*j])  
   \end{equation*}
   In each of these cases, we get the desired result.
%
%
%
\end{proof}

If $\tau \in \cB(M_{\A}(F))$ and $j\in J\setminus \tau$, then $\tau\cup j$ contains a unique circuit $\gamma(\tau,j)$ of $M_{\A}(F)$ given by
\begin{equation}
\label{eq:fundamentalCircuit1}
\gamma(\tau,j) = \set{i\in\tau}{\tau\Delta ij \in \cB(M_{\A}(F)) } \cup\{j\} 
\end{equation}
Similarly, if $\tau \in \cB(\overline{M(F)})$ and $j\in J\setminus \tau$, then  $\tau \cup j$ contains a unique circuit $\overline{\gamma}(\tau,j)$ of $\overline{M(F)}$ given by
\begin{equation}
\label{eq:fundamentalCircuit2}
\overline{\gamma}(\tau,j) = \set{i\in\tau}{\tau \Delta ii^*jj^* \in \cB(\overline{M(F)}) } \cup \{j\}.
\end{equation}
Every circuit of $M_{\A}(F)$, resp. $\overline{M(F)}$, is of the form $\gamma(\tau,j)$, resp. $\overline{\gamma}(\tau,j)$. 

\begin{lemma}
	\label{lem:equalFundamentalCurcuits}
	If $\tau\in \cB(\overline{M(F)})$ and $j\in J \setminus  \tau$, then
	\begin{equation*}
	\gamma(\tau,j) =  \overline{\gamma}(\tau,j)
	\end{equation*}
\end{lemma}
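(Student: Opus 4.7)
The plan is to reduce the set equality to an elementwise equivalence and then invoke the Pfaffian-Pl\"ucker factorization from Lemma \ref{lem:detPfIsotropic}. Since both $\gamma(\tau,j)$ and $\overline{\gamma}(\tau,j)$ contain $j$ by definition, it suffices to prove, for each $i \in \tau$, the equivalence
\[
\tau \Delta ij \in \cB(M_{\A}(F)) \iff \tau \Delta ii^*jj^* \in \cB(\overline{M(F)}).
\]

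First I would check that $\tau \Delta ii^*jj^*$ is always an admissible transversal. When $i \neq j^*$ the four elements $i,i^*,j,j^*$ are distinct, and the symmetric difference removes $\{i,j^*\} \subset \tau$ while adding $\{i^*,j\} \subset J \setminus \tau$; when $i = j^*$ the iterated symmetric difference collapses to $\tau$ itself. In either case the result is an admissible transversal, and therefore $\tau \Delta ii^*jj^* \in \cB(\overline{M(F)})$ iff $q_{(\tau \Delta ii^*jj^*) \cap [n]}(F) \neq 0$, using the identification $q_\lambda = p_{\overline{\lambda}}$.

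Next I would invoke the factorization from the remark after Lemma \ref{lem:detPfIsotropic},
\[
p_{\tau \Delta ij}(F) = \pm \, q_{\tau \cap [n]}(F) \cdot q_{(\tau \Delta ii^*jj^*) \cap [n]}(F).
\]
The factor $q_{\tau \cap [n]}(F) = p_\tau(F)$ is nonzero because $\tau \in \cB(\overline{M(F)})$, so the left side vanishes exactly when the second Wick coordinate on the right vanishes. Combined with the admissibility discussion, this yields the desired equivalence.

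The one technical point is that Lemma \ref{lem:detPfIsotropic} is stated for $F$ in the open chart $\cU$, i.e.\ $F$ is the row span of $[I \mid X]$. For an arbitrary $F \in \bS_n$ with $\tau \in \cB(\overline{M(F)})$, I would reduce to that case by the orthogonal coordinate swap $u_k \leftrightarrow u_{k^*}$ for each $k \in [n]\setminus (\tau \cap [n])$; this preserves the quadratic form $Q$ and sends $\tau$ to the transversal $[n]$, placing the transformed subspace in the appropriate affine chart. Transporting the factorization back via the induced permutation of Pl\"ucker coordinates produces the identity for the original $F$. The main obstacle is thus the bookkeeping around this coordinate swap (including tracking signs and the $\bS_n$ versus $\bS_n^-$ component); the rest of the argument is a direct unfolding of the circuit definitions.
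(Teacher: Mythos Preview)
Your proposal is correct and takes essentially the same approach as the paper: reduce to an elementwise equivalence and apply the Pfaffian--Pl\"ucker factorization of Lemma~\ref{lem:detPfIsotropic}. The only cosmetic difference is in the WLOG step---the paper assumes $[n]\in\cB(\overline{M(F)})$ (putting $F$ in the chart $\cU$) while keeping $\tau$ arbitrary, whereas you swap coordinates to send $\tau$ itself to $[n]$; both reductions are equivalent and the core argument is identical.
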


\begin{proof}
Without loss of generality, assume that $\bn\in \cB(\overline{M(F)})$. Note that $n-|\tau\cap \bn|$ is even and $q_{\tau \cap \bn}(F) \neq 0$. By Lemma \ref{lem:detPfIsotropic}, we have that $i\in \gamma(\tau,j)\setminus j$ if and only if $p_{\tau \Delta ij}(F) \neq 0$, if and only if $q_{(\tau\Delta ii^*jj^*) \cap \bn}(F) \neq 0$, if and only if $\tau\Delta ii^*jj^* \in \cB(\overline{M(F)})$, if and only if $i\in \overline{\gamma}(\tau,j)\setminus j$.
\end{proof}
\noindent The set $\gamma(\tau,j)=\overline{\gamma}(\tau,j)$, is called the \textit{fundamental circuit} of the pair $(\tau,j)$.

Fix a Pl\"ucker vector $p(F)$ and a Wick vector $q(F)$ for $F$; assume that these are compatible in the sense that $p_{\lambda}(F) = q_{\lambda\cap [n]}(F) = 1$ for some $\lambda \in \cB(\overline{M(F)})$. View $F$ as a subscheme of $\Spec(\kk[y_{i},y_{i^*}])$ where $\kk[y_{i},y_{i^*}] = \kk[y_{i},y_{i^*} \, : \, i=1, \ldots, n]$, and let $I_F$ be the ideal of $F$. For $\tau \in \pB(M_{\A}(F))$ and $j\in J \setminus \tau$, there is, up to scaling, a unique linear form in $I_F$ with support $\gamma = \gamma(\tau,j)$. It is given by
\begin{equation*}
\ell_{\gamma}(F) =  (-1)^{\sg(j,j;\tau)}p_{\tau}(F)y_j + \sum_{i\in \tau} (-1)^{\sg(i,j;\tau)} p_{\tau\Delta ij}(F) \cdot y_i.
\end{equation*}
where $\sg$ is some sign function that is not important for us.
Now suppose $\tau\in \cB(\overline{M(F)})$ and $j\in J \setminus \tau$.  Then $\ell_{\gamma}(F)$ is a scalar multiple of the form 
\begin{equation*}
m_{\gamma}(F) =  (-1)^{\sg'(j,j;\tau)}q_{\tau\cap [n]}(F)y_j + \sum_{i \in \tau} (-1)^{\sg'(i,j,\tau)} q_{(\tau \Delta ii^*jj^*)\cap \bn}(F) \cdot y_i.
\end{equation*}
where $\sg'$ is some other sign function that is not important for us. Indeed, $\ell_{\gamma} = q_{\tau \cap \bn} m_{\gamma}$ by Lemma \ref{lem:detPfIsotropic}. Also note that $\supp(\ell_{\gamma}(F)) = \supp(m_{\gamma}(F)) = \gamma$ by \eqref{eq:fundamentalCircuit1}, \eqref{eq:fundamentalCircuit2} and Lemma \ref{lem:equalFundamentalCurcuits}, where $\supp$ denotes the support of a linear form.

\begin{proposition}
	\label{prop:gensIF}
The ideal of $F$ is generated by these linear forms, i.e.,
\begin{align*}
I_F = \langle \ell_{\gamma}(F) \, : \, \gamma\in \cC(M_{\A}(F))     \rangle =  \langle m_{\gamma}(F) \, : \, \gamma\in \cC(\overline{M(F)})     \rangle.
\end{align*}	
Moreover, $\set{\ell_{\gamma}(F)}{\gamma\in \cC(M_{\A}(F)}$ is a universal Gr\"obner basis for $I_F$. 
\end{proposition}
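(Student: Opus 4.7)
The plan is to handle the two ideal equalities separately and then invoke the classical universal Gr\"obner basis theorem for linear ideals.

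First I would establish $I_F = \langle \ell_\gamma(F) : \gamma \in \cC(M_{\A}(F))\rangle$. Writing $F = \rowsp(W)$ for some $n\times 2n$ matrix $W$, the ideal $I_F$ is generated by its linear component $F^{\perp}$, and a linear form $\sum_j a_j y_j$ lies in $F^{\perp}$ precisely when $(a_j)_{j\in J}$ is a linear relation among the columns of $W$. The minimal supports of such relations are the circuits of the column matroid $M(W) = M_{\A}(F)$, and by construction $\ell_\gamma(F)$ is, up to scaling, the unique minimal-support form with $\supp(\ell_\gamma(F)) = \gamma$. Since any element of $F^{\perp}$ can be reduced to a combination of minimal-support elements (a routine matroid-style elimination argument using that one can always eliminate an index in the support via a circuit contained in it), the $\ell_\gamma(F)$ generate $F^{\perp}$ and hence $I_F$.

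Next I would prove $I_F = \langle m_\gamma(F) : \gamma \in \cC(\overline{M(F)}) \rangle$ by combining Lemma~\ref{lem:equalFundamentalCurcuits} with the identity $\ell_{\gamma(\tau,j)}(F) = q_{\tau \cap [n]}(F)\cdot m_{\gamma(\tau,j)}(F)$ from the paragraph preceding the proposition. Each $m_\gamma(F)$ with $\gamma\in\cC(\overline{M(F)})$ can be written as $m_{\gamma(\tau,j)}(F) = q_{\tau\cap [n]}(F)^{-1}\ell_{\gamma(\tau,j)}(F)$ for appropriate $\tau\in\cB(\overline{M(F)})$ and $j\in J\setminus\tau$, and hence lies in $I_F$. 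For the reverse inclusion, fix any $\tau_0\in\cB(\overline{M(F)})$. Because $y_j$ appears with nonzero coefficient in $m_{\gamma(\tau_0,j)}(F)$ for each $j\in J\setminus \tau_0$ while no other $y_{j'}$ with $j' \in J\setminus\tau_0$ appears there, these $n$ forms are linearly independent; since $\dim_{\kk} F^{\perp} = n$, they already span $F^{\perp}$.

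Finally, for the universal Gr\"obner basis statement I would invoke the classical result that the circuit forms of a linear ideal constitute a universal Gr\"obner basis (cf.\ Sturmfels, \emph{Gr\"obner Bases and Convex Polytopes}, Proposition~1.6). The core input is that for any weight $w$, $\init_w I_F$ is again the ideal of a linear subspace whose matroid is the initial matroid of $M_{\A}(F)$ with respect to $w$, and each of its circuits arises as the $w$-minimum-weight support of a circuit of $M_{\A}(F)$; this forces $\init_w I_F$ to be generated by the initial forms $\init_w \ell_\gamma(F)$. The main obstacle is precisely this last point: it is not purely formal and relies on the matroid-theoretic interpretation of initial ideals of linear varieties rather than a direct computation, but it is by now a standard fact in the literature.
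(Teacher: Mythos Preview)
Your proposal is correct and follows essentially the same approach as the paper: the paper dispatches the first equality and the universal Gr\"obner basis statement by citing \cite[Lemma~4.1.4]{MaclaganSturmfels2015} (whose content is exactly the circuit-form argument you spell out), and for the second equality it argues, just as you do, that each $m_{\gamma}(F)$ lies in $I_F$ as a scalar multiple of $\ell_{\gamma}(F)$ and that for a fixed $\tau\in\cB(\overline{M(F)})$ the $n$ forms $\{m_{\gamma(\tau,j)}(F):j\notin\tau\}$ already form a basis of the null space of $F$. The only difference is expository: you unpack the cited lemma rather than invoking it.
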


\begin{proof}
	The first equality and the last statement follows from \cite[Lemma~4.1.4]{MaclaganSturmfels2015}. Now consider the last equality. Each $m_{\gamma}(F)$ lies in $I_F$ because $m_{\gamma}(F)$ is a multiple of $\ell_{\gamma}(F)$. Fix $\tau\in \cB(\overline{M(F)})$. The set $\{m_{\gamma(\tau,j)}(F) \, : \,  j\notin \tau\}$ is a basis for the null space of $F$, and hence we get the second equality. 
\end{proof}

We end this section by making precise the geometric characterization of Theorem \ref{thm:closedImmersion} presented in the introduction.   Let $M$ be an even $\Delta$-matroid, $x$ be a $\kk$-point of $\init_w\bS_{M}$, and $\KK = \kk(\!(t^{\RR})\!)$. By surjectivity of exploded tropicalization \cite{Payne}, there is a $\KK$-point $q$ of $\bS_M$ such that $\ETrop(q) = x$. Let $F_{q} \subset \Spec(\KK[y_i,y_{i^*}])$ be the totally isotropic subspace with Wick vector $q$, and $F_q^{\circ}$ the intersection of $F_{q}$ with the dense torus $\Spec(\KK[y_i^{\pm},y_{i^*}^{\pm}])$.  By Proposition \ref{prop:initIsotropic} below, if $(-u,u) \in \Trop(F_{q}^{\circ})$ then  $\overline{\init_{(-u,u)}F_q^{\circ}}$ is totally isotropic, and a realization of $M_{u}^{w}$. Therefore, the map $\init_{w}\bS_{M} \to \bS_{M_{u}^{w}}$ takes $x$ to (the Wick vector of) $\overline{\init_{(-u,u)}F_q^{\circ}}$.

\begin{proposition}
	\label{prop:initIsotropic} 
	Suppose $(-u,u) \in \Trop(F_q^{\circ})$.   The closure of $\init_{(-u,u)}F_q^{\circ}$ in $\kk^{2n}$ is totally isotropic, and realizes the $\Delta$-matroid $M_{u}^{w}$. 
\end{proposition}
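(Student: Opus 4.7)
The plan is to establish the two claims of the proposition separately: that $V := \overline{\init_{(-u,u)} F_q^\circ}$ is a totally isotropic linear subspace of $\kk^{2n}$, and that $V$ realizes $M_u^w$. Linearity and total isotropy come from inspecting the initial ideal of the linear ideal $I_{F_q}$, while the matroid identification is obtained by tracking Wick coordinates under the $H$-action.

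For linearity and isotropy I invoke Proposition \ref{prop:gensIF}: the ideal $I_{F_q}$ is generated by the linear forms $m_\gamma(F_q)$, which (being scalar multiples of the $\ell_\gamma(F_q)$ by the nonzero element $q_\sigma$) form a universal Gr\"obner basis. Since initial forms of linear forms are linear, $\init_{(-u,u)} I_{F_q}$ is a linear ideal, and so $V$ is a linear subspace of $\kk^{2n}$. For total isotropy, the quadratic form $Q = \sum_i y_i y_{i^*}$ lies in $I_{F_q}$ because $F_q$ is totally isotropic, and each monomial $y_i y_{i^*}$ has weight $-u_i + u_i = 0$ under $(-u,u)$, so $\init_{(-u,u)} Q = Q \in \init_{(-u,u)} I_{F_q}$. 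Hence $Q$ vanishes on $V$, and since $\ch \kk = 0$, polarization yields $Q(a,b) = 0$ for all $a, b \in V$.

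For the $\Delta$-matroid identification, I would identify $V$ with the $t \to 0$ limit of the $H$-orbit $h \cdot F_q$, where $h = (t^{-u_1},\ldots,t^{-u_n}) \in H(\KK)$. The element $h$ acts on a matrix representative $[I_n|X]$ by $[I_n | D_{-u} X D_{-u}]$ (after row reduction, with $D_{\pm u} = \diag(t^{\pm u_i})$), and the Pfaffian identity $\Pf(DYD) = \det(D)\Pf(Y)$ gives $q_\mu(h \cdot F_q) = t^{\langle u, e_\mu\rangle - C} q_\mu(F_q)$ with $C = \sum_i u_i$, so projectively $\val((h \cdot q)_\mu) = w_\mu + \langle u, e_\mu \rangle$. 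On the ambient torus $\GG_m^{2n}$, the action of $h$ corresponds to the one-parameter subgroup with weight $(u,-u) = -(-u,u)$, which is precisely the direction defining $\init_{(-u,u)}$. Because $I_{F_q}$ is linear, the Wick vector of $V$ is the residue $\overline{t^{-W}(h \cdot q)}$ with $W = \min_\mu(w_\mu + \langle u, e_\mu\rangle)$, nonzero at $\mu$ precisely when $w_\mu + \langle u, e_\mu\rangle = W$, i.e., when $\mu \in \cB(M_u^w)$. The principal technical step is the identification of $\init_{(-u,u)} F_q^\circ$ with the rescaled residue of $h \cdot F_q^\circ$; this relies on the linearity of $I_{F_q}$ and on compatibility of Payne's exploded tropicalization \cite{Payne} with the Wick embedding.
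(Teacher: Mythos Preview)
Your treatment of linearity and isotropy is correct and in fact cleaner than the paper's: you establish isotropy directly via $\init_{(-u,u)} Q = Q$, whereas the paper gets it by first producing, via Proposition~\ref{prop:mapInitToStratum}, a candidate isotropic subspace $F_{\overline{x}}$ (with Wick vector $\overline{x}_\mu = x_\mu$ for $\mu\in\cB(M_u^w)$ and $0$ otherwise) and then proving $V=F_{\overline{x}}$. One small omission: you should record that $\dim V = n$ (e.g.\ by flatness of the degeneration), so that $V$ actually lies in $\bS_n^{\pm}$.

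For the $\Delta$-matroid identification your route via the $H$-action is genuinely different from the paper's, but the step you yourself flag as ``the principal technical step'' is a real gap. The paper does not use the $H$-action here at all; instead it compares generators directly: for each fundamental circuit $\gamma=\overline{\gamma}(\tau,j)$ of $\overline{M}$ it computes $\init_{(-u,u)} m_{\gamma}(F_q)$, shows its support is exactly the fundamental circuit $\gamma'$ of $(\tau,j)$ in $\overline{M_u^w}$, and hence $\init_{(-u,u)} m_\gamma(F_q)=m_{\gamma'}(F_{\overline{x}})$. This yields $I_{F_{\overline{x}}}\subset \init_{(-u,u)} I_{F_q}$, and equality follows by dimension. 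Your approach replaces this with the assertion that the Wick vector of $V$ is the residue of the rescaled $h\cdot q$; but ``compatibility of exploded tropicalization with the Wick embedding'' is not a citable statement. To make your argument rigorous you must either (i) choose a matrix model of $h\cdot F_q$ over the valuation ring with $\overline{\tau}$-pivots for some $\tau\in\cB(M_u^w)$, reduce, and verify the Wick coordinates of the reduction---which, when unwound, is the paper's circuit computation---or (ii) invoke the standard Pl\"ucker-coordinate version of this statement for ordinary linear spaces and then transfer to Wick coordinates via Lemma~\ref{lem:detPfIsotropic} and $p_{\overline\mu}=\pm q_\mu^2$, handling the signs. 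Either way the work is comparable to the paper's; as written, your matroid identification is incomplete.
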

\begin{proof}
	By Proposition \ref{prop:mapInitToStratum}, the vector $\overline{x} = (x_{\mu} \, : \, \mu\in \cB(M_{u}^{w})) \times 0 \in \kk^{\cB(M_{u}^{w})}\times \kk^{E(n)\setminus \cB(M_{u}^{w})}$ lies in $\bS_{M_u^w}$. That is, $\overline{x}$ is the Wick vector of a totally isotropic subspace $F_{\overline{x}}$ realizing $\bS_{M_u^w}$. We claim that $\overline{\init_{(-u,u)}F_{q}^{\circ}} = F_{\overline{x}}$. Let $I_{F_q}$ be the ideal of $F_{q}$ in $\KK[y_{i},y_{i^*}]$ and $I_{F_{\overline{x}}}$ the ideal of $F_{\overline{x}}$ in $\kk[y_{i},y_{i^*}]$. The linear forms $\ell_{\gamma}(F_q)$ form a universal Gr\"obner basis for $I_{F_q}$, so
	\begin{equation*}
	\init_{(-u,u)} I_{F_q} = \langle \init_{(-u,u)} \ell_{\gamma}(F_q) \, :\, \gamma \in \cC(M_{\A}(F_q))\rangle \subset \kk[y_{i},y_{i^*}].
	\end{equation*}
	Because $I_{F_{\overline{x}}} $ and $ \init_{(-u,u)} I_{F_q}$ define $n$-dimensional linear subschemes of $\Spec(\kk[y_{i},y_{i^*}])$, it suffices to show $I_{F_{\overline{x}}} \subset \init_{(-u,u)} I_{F_q}$. Given $\tau\in \cB(\overline{M_{u}^{w}})$ and $j\in J\setminus  \tau$, let $\gamma$ be the fundamental circuit of $(\tau,j)$ in $\overline{M}$, and $\gamma'$ the fundamental circuit of $(\tau,j)$ in $\overline{M_{u}^{w}}$. That is,
	\begin{align*}
	\gamma &= \set{i\in \tau}{\tau \Delta ii^*jj^* \in \cB(\overline{M})}\cup\{j\}, \\
	\gamma' &= \set{i\in \tau}{\tau \Delta ii^*jj^* \in \cB(\overline{M}) \text{ and } \bracket{u}{e_{(\tau \Delta ii^*jj^*)\cap [n]}} + w_{(\tau \Delta ii^*jj^*)\cap [n]}  \text{ is minimal}} \cup\{j\}.
	\end{align*}
	We have
	\begin{equation*}
	\init_{(-u,u)} m_{\gamma}(F_{q}) =  (-1)^{\sg'(j,j;\tau)}x_{\tau\cap [n]}y_j(F_{q}) +  \sum_{i\in \gamma''}  (-1)^{\sg'(i,j,\tau)} x_{(\tau \Delta ii^*jj^*)\cap \bn}(F_{q}) \cdot y_i,
	\end{equation*}
	where
	\begin{align*}
	\gamma'' &= \set{i\in \tau \setminus j}{\tau \Delta ii^*jj^* \in \cB(\overline{M}) \text{ and } \bracket{(-u,u)}{e_i} + w_{(\tau \Delta ii^*jj^*)\cap [n]}  \text{ is minimal}}.
	\end{align*}
	By the equality 
	\begin{equation*}
	\bracket{u}{e_{(\tau \Delta ii^*jj^*)\cap [n]}} - \bracket{(-u,u)}{e_i}  = \bracket{u}{e_{(\tau \Delta jj^*) \cap [n]}} 
	\end{equation*}
	 we see that $\supp(	\init_{(-u,u)} m_{\gamma}(F_{q})) = \gamma'' \cup j = \gamma'$, and therefore  $\init_{(-u,u)} m_{\gamma}(F_{q}) = m_{\gamma'}(F_{\overline{x}})$. By Proposition \ref{prop:gensIF} and because every circuit $\gamma'$ of $\overline{M_{u}^{w}}$ is a fundamental circuit of some pair $(\tau,j)$, we see that $I_{F_{\overline{x}}}\subset  \init_{(-u,u)} I_{F_q}$, as required.
\end{proof}

\section{Affine coordinates for strata} 
\label{sec:TechniquesLimitsStrata}

\subsection{Affine coordinates}
Throughout this subsection, we fix an even $\Delta$-matroid $M$ on $\bn$ that has $\bn$ as a basis. This means that $\bS_{M} $ is contained in open cell $\cU$ from \eqref{eq:affineChart}, so every $F\in \bS_{M}(\kk)$ is the row span of a $\kk$-valued $n\times 2n$ matrix of the form $[I_n|X_F]$ where $X_F$ is skew-symmetric. Let $X$ be the skew-symmetric matrix of variables 
\begin{equation*}
X = \begin{bmatrix}
0 & x_{01} & x_{02} & \cdots & x_{0,n-1} \\
-x_{01} & 0 & x_{12} & \cdots & x_{1,n-1} \\
-x_{02} & -x_{12} & 0 & \cdots & x_{2,n-1} \\
\cdots & \cdots & \cdots & \cdots &\cdots \\
-x_{0,n-1} & -x_{1,n-1} & -x_{2,n-1} & \cdots & 0 
\end{bmatrix}.
\end{equation*}
As before, let $X[\lambda]$ denote the submatrix of $X$ whose rows and columns are indexed by $\lambda$, and $\Pf(X[\lambda])$ the Pfaffian of $X[\lambda]$.  Define
\begin{itemize}[noitemsep]
	\item[-] $B_{M}^{x} = \kk[x_{ij} \, : \, \bn \setminus ij \in  \cB(M)]$;
	\item[-] $I_M^x = \langle \Pf(X[\bn\setminus \lambda]) \, : \, \lambda \in E(n) \setminus \cB(M) \rangle \cap B_M^x$;
	\item[-] $S_{M}^x$ the multiplicative semigroup generated by $\set{\pi(\Pf(X[\bn\setminus \lambda]))}{\lambda \in \cB(M)}$  where  $\pi:\kk[x_{ij}] \to \kk[x_{ij}]/\langle x_{ij} \, : \, \bn\setminus \{ij\}\in E(n) \setminus \cB(M) \rangle  \cong  B_M^x$ is the quotient map. 
\end{itemize}
The coordinate ring of $\bS_M$ is isomorphic to 
\begin{equation*}
R_{M}^{x} = (S_{M}^x)^{-1}B_M^x/I_M^x.
\end{equation*}

\begin{proposition}
	\label{prop:mapXcoordinates}
	Suppose $\bn$ is a basis of both $M$ and $M_{u}$. The induced morphism $\varphi_{M,M_u}^{\#}: R_{M_u}^x \to R_{M}^x$ is given by $\varphi_{M,M_u}^{\#}(x_{ij}) = x_{ij}$. 
\end{proposition}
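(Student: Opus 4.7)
The plan is to translate the Wick-coordinate description of $\varphi_{M,M_u}^{\#}$ from Proposition \ref{prop:mapPlucker} into the affine $x$-coordinates via the standard identification arising from the Wick embedding on the chart $\cU$ where $q_\bn \neq 0$.

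First, I would set up this identification carefully. Since $\bn \in \cB(M)$, the stratum $\bS_M$ is contained in $\cU$, so every $F \in \bS_M$ is the row span of a unique matrix $[I_n \mid X_F]$ with $X_F$ skew-symmetric, and the Wick coordinates in this normalization satisfy $q_\lambda(F) = \Pf(X_F[\bn\setminus\lambda])$; in particular $q_\bn(F) = 1$ and $x_{ij}(F) = \Pf(X_F[ij]) = q_{\bn\setminus ij}(F)$ (up to a sign that will not affect the conclusion). Algebraically, this yields an isomorphism
\[
R_M^x \otimes_{\kk} \kk[t^{\pm 1}] \;\xrightarrow{\;\sim\;}\; R_M, \qquad x_{ij} \mapsto q_{\bn\setminus ij}/q_\bn,\quad t \mapsto q_\bn,
\]
where the $\kk[t^{\pm 1}]$-factor records the $\GGm$-rescaling of the Wick vector.

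Next, I would invoke Proposition \ref{prop:mapPlucker}, which tells us that $\varphi_{M,M_u}^{\#}: R_{M_u} \to R_M$ is induced by the inclusion $B_{M_u} \subset B_M$, i.e., it acts on each generator by $q_\lambda \mapsto q_\lambda$ for $\lambda \in \cB(M_u)$. The hypothesis $\bn \in \cB(M_u)$ together with the inclusion $\cB(M_u) \subset \cB(M)$ (a consequence of the Gelfand--Serganova theorem, since $Q_{M_u}$ is a face of $Q_M$) implies that for every $ij$ with $\bn\setminus ij \in \cB(M_u)$ we also have $\bn\setminus ij \in \cB(M)$, so $x_{ij}$ is a variable of both $B_{M_u}^x$ and $B_M^x$. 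Combining with the previous paragraph,
\[
\varphi_{M,M_u}^{\#}(x_{ij}) \;=\; \varphi_{M,M_u}^{\#}\!\left(q_{\bn\setminus ij}/q_\bn\right) \;=\; q_{\bn\setminus ij}/q_\bn \;=\; x_{ij} \qquad \text{in } R_M^x.
\]

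The main technical point to verify is that the claimed isomorphism $R_M \cong R_M^x \otimes_\kk \kk[t^{\pm 1}]$ really is an isomorphism of $\kk$-algebras---equivalently, that the Wick quadric relations in \eqref{eq:quadricGenerators} reduce precisely to Pfaffian identities among minors of the skew-symmetric matrix $X$ under the substitution $q_\lambda \mapsto \Pf(X[\bn\setminus\lambda])\cdot t$. This is a standard consequence of Pfaffian expansion that already underlies the Wick embedding, and matching the vanishing generators of $I_M$ with those of $I_M^x$ is a straightforward bookkeeping exercise; once granted, the proposition follows immediately.
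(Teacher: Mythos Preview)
Your proposal is correct and follows essentially the same approach as the paper: both translate the Wick-coordinate map $q_\lambda \mapsto q_\lambda$ from Proposition~\ref{prop:mapPlucker} into affine $x$-coordinates via the identification $x_{ij} \leftrightarrow q_{\bn\setminus ij}/q_\bn$ coming from $q_\lambda/q_\bn = \Pf(X[\bn\setminus\lambda])$. The only cosmetic difference is that the paper dehomogenizes by working directly with the ring $\tilde{R}_M = S_M^{-1}\kk[q_\lambda/q_\bn]/I_M$ and the isomorphism $\theta_M:\tilde{R}_M \to R_M^x$, whereas you track the extra $\GGm$-factor by tensoring with $\kk[t^{\pm 1}]$; the underlying computation is identical.
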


\begin{proof}
	Set
	\begin{equation*}		
	\tilde{R}_{M} = S_M^{-1}\kk[q_{\lambda}/q_{\bn} \, | \, \lambda \in \cB(M)]/I_M 
	\end{equation*}
	The map  $q_{\lambda}/q_{\bn} \mapsto \Pf_{\lambda}(X)$ determines an isomorphism $\theta_M:\tilde{R}_{M} \to R_{M}^x$. 	By Proposition \ref{prop:mapPlucker}, the morphism $\varphi_{M,M_u}:\bS_{M} \to \bS_{M_u}$ induces the ring homomorphism $\psi: \tilde{R}_{M_u} \to \tilde{R}_{M}$ given by $\psi(q_{\lambda}/q_{\bn}) = q_{\lambda}/q_{\bn}$. The induced map $\varphi_{M,M_u}^{\#}$ is equal to $\theta_{M_u}\circ \psi \circ \theta_{M}^{-1}$, which takes $x_{ij}$ to $x_{ij}$, as required.
\end{proof}

We illustrate the affine coordinates construction with the following example, which we use in the proof of Theorem \ref{thm:schonSpinor}.
\begin{example}
	\label{ex:SpecialDeltaMatroid}
	Suppose $K$ is the even $\Delta$-matroid on $[5]$ with 
	\begin{equation*}
	\cB(K) = \{0, 1, 012, 013, 014, 034, 134, 01234  \} 	
	\end{equation*}
	Then
	\begin{equation*}
	B_{K}^x=\kk[x_{02}, x_{12}, x_{23}, x_{24}, x_{34}], \hspace{10pt}  I_K^x = \langle 0 \rangle, \hspace{10pt} \text{ and } \hspace{10pt}  S_K^{x} = \langle x_{02}, x_{12}, x_{23}, x_{24}, x_{34}\rangle_{\semigp}.
	\end{equation*}
	and hence $R_K^x = \kk[x_{02}^{\pm}, x_{12}^{\pm}, x_{23}^{\pm}, x_{24}^{\pm}, x_{34}^{\pm}]$. This shows that $\bS_K \cong \GGm^5$. The polytope $Q_{K}$ has two facets not contained in $\partial \Delta(5)$ which are defined by the vectors $u = (1,1,-1,1,-1)$ and $v = (1,1,-1,-1,1)$. The initial $\Delta$-matroids $K_u$ and $K_v$ have bases
	\begin{equation*}
	\cB(K_u) = \{0, 1, 012, 014, 034, 134, 01234  \} \hspace{10pt} \cB(K_v) = \{0, 1, 012, 013, 034, 134, 01234  \}.
	\end{equation*} 
	The coordinate rings of $\bS_{K_u}$ and $\bS_{K_v}$ are
	\begin{equation*}
	R_{K_u}^x = \kk[x_{02}^{\pm}, x_{12}^{\pm}, x_{23}^{\pm}, x_{34}^{\pm}], \hspace{20pt} 
	R_{K_v}^x = \kk[x_{02}^{\pm}, x_{12}^{\pm}, x_{24}^{\pm}, x_{34}^{\pm}].
	\end{equation*}
	Therefore, $\bS_{K_u}$ and $\bS_{K_v}$ are isomorphic to $\GGm^4$, The morphisms 
	\begin{equation*}
	\varphi_{K,K_u}:\bS_{K}\to \bS_{K_u} \hspace{20pt} \varphi_{K,K_v}:\bS_{K}\to \bS_{K_v}
	\end{equation*}
	may be identified with coordinate projections of tori; in particular, they are smooth and surjective with connected fibers.
\end{example}

\begin{lemma}
	\label{lem:closedImmersionBoundary}
	The morphism
	$(\varphi_{K,K_u}, \varphi_{K_u,K_v}):\bS_{K} \to \bS_{K_u}\times \bS_{K_v}$ is a closed immersion.
\end{lemma}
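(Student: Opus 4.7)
The plan is to reduce the claim to the surjectivity of a single ring homomorphism on coordinate rings. Reading the statement, I take the second factor to be $\varphi_{K,K_v}$ (the displayed $\varphi_{K_u,K_v}$ appears to be a typo, since $K_v$ is a face of $K$, not of $K_u$, and the domain is $\bS_{K}$). Since $\bS_K$, $\bS_{K_u}$, $\bS_{K_v}$ are all affine (tori, in fact, by Example \ref{ex:SpecialDeltaMatroid}), the product $\bS_{K_u}\times \bS_{K_v}$ is also affine, with coordinate ring $R_{K_u}^x \otimes_{\kk} R_{K_v}^x$. The product morphism corresponds to the unique $\kk$-algebra map
\[
\Phi: R_{K_u}^x \otimes_{\kk} R_{K_v}^x \longrightarrow R_{K}^x
\]
determined by the two pullbacks $\varphi_{K,K_u}^{\#}$ and $\varphi_{K,K_v}^{\#}$, and the claim is equivalent to surjectivity of $\Phi$.

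To verify surjectivity, I would first invoke Proposition \ref{prop:mapXcoordinates}, which says that each of $\varphi_{K,K_u}^{\#}$ and $\varphi_{K,K_v}^{\#}$ sends the generator $x_{ij}$ of the target to the identically-named generator $x_{ij}$ of $R_K^x$. Using the explicit descriptions from Example \ref{ex:SpecialDeltaMatroid},
\[
R_{K_u}^x = \kk[x_{02}^{\pm}, x_{12}^{\pm}, x_{23}^{\pm}, x_{34}^{\pm}], \qquad R_{K_v}^x = \kk[x_{02}^{\pm}, x_{12}^{\pm}, x_{24}^{\pm}, x_{34}^{\pm}],
\]
the image of $\varphi_{K,K_u}^{\#}$ contains $x_{02}^{\pm 1}, x_{12}^{\pm 1}, x_{23}^{\pm 1}, x_{34}^{\pm 1}$, while the image of $\varphi_{K,K_v}^{\#}$ contains $x_{02}^{\pm 1}, x_{12}^{\pm 1}, x_{24}^{\pm 1}, x_{34}^{\pm 1}$. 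Jointly these hit every generator of
\[
R_K^x = \kk[x_{02}^{\pm}, x_{12}^{\pm}, x_{23}^{\pm}, x_{24}^{\pm}, x_{34}^{\pm}],
\]
so $\Phi$ is surjective. A surjection of coordinate rings corresponds to a closed immersion of affine schemes, giving the result.

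There is essentially no obstacle here beyond bookkeeping: the point is that the facet $K_u$ ``remembers'' the coordinate $x_{23}$ (equivalently, the basis $013$) and $K_v$ ``remembers'' $x_{24}$ (equivalently, $014$), while every other coordinate of $R_K^x$ already appears in both. It is precisely this complementarity of the two facets that makes the joint map a closed immersion, even though neither factor is individually a closed immersion (each is a coordinate projection $\GGm^5 \to \GGm^4$).
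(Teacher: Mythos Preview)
Your proof is correct and follows essentially the same approach as the paper: both reduce to showing that the induced map $\varphi_{K,K_u}^{\#} \otimes \varphi_{K,K_v}^{\#}: R_{K_u}^x \otimes_{\kk} R_{K_v}^x \to R_{K}^x$ is surjective, appealing to the explicit descriptions of these rings and maps from Example~\ref{ex:SpecialDeltaMatroid}. Your write-up simply spells out the bookkeeping (and correctly flags the typo in the second factor) that the paper leaves implicit.
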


\begin{proof}
The induced map on coordinate rings is 
\begin{equation*}
\varphi_{K,K_u}^{\#} \otimes \varphi_{K,K_v}^{\#} : R_{K_u}^x \otimes_{\kk} R_{K_v}^x \to R_{K}^x
\end{equation*}
which is surjective; see the explicit description of these maps above. 
\end{proof}

\subsection{Inverse limits over a graph}
\label{sec:InverseLimitsGraphs}

In this subsection we  show that, to compute the inverse limit $\bS_{M,w}$ in \eqref{eq:limitStrata}, we do not need the full poset $\Delta_{M,w}$, just those cells of codimension 0 and 1. In other words, we show that $\bS_{M,w}$ may be computed as a limit over a diagram recorded by  the adjacency graph $\Gamma_{M,w}$.  We begin by recalling this general construction, see \cite[Appendix~A]{Corey17} for details.

Let $\cC$ be a category that has finite limits; by \cite[Proposition~5.21]{Awodey}, it is necessary and sufficient that $\cC$ has fiber products and a terminal object.  Let $G$ be a connected graph, possibly with loops or multiple edges. We view each edge as a pair of half-edges. Define a quiver $Q(G)$ in the following way. The set of vertices of $Q(G)$ is $V(G)\cup E(G)$; write $q_v$ (resp. $q_e$) for the vertex of $Q(V)$ corresponding to the vertex $v$ (resp. edge $e$). For each half-edge $h\in e$ adjacent to $v$, there is an arrow $q_v\to q_e$. Viewing $Q(G)$ as a category in the usual way, a \textit{diagram}  of type $Q(G)$ in $\cC$ is a functor $X:Q(G) \to \cC$.

Let $\Gamma_{M,w}$ be the adjacency graph of a matroid subdivision $\Delta_{M,w}$. Let $M_v$, resp. $M_e$, denote the $\Delta$-matroid corresponding to the vertex $v$, resp. edge $e$, of $\Gamma_{M,w}$, and $\varphi_{M_v,M_e}:\bS_{M_v} \to \bS_{M_e}$ whenever $e$ is incident to $v$. The data of $\bS_{M_v}, \bS_{M_e}$, and $\varphi_{M_v,M_e}$ defines a diagram of type  $Q(\Gamma_{M,w})$ in $\kSch$.

Let  $(\Delta_{M,w})^{\Top}$ be the  set of top-dimensional cells of $\Delta_{M,w}$, and $A$ be a nonempty subset of  $(\Delta_{M,w})^{\Top}$. We isolate some properties of $A$ that allows for different ways to study limits of spinor strata over full subgraphs $\Gamma_{M,w}[A]:= \Gamma_{M,w}[\set{v_Q}{Q\in A}]$ of the adjacency graph $\Gamma_{M,w}$ and their relation to initial degenerations of spinor varieties. 
\begin{itemize}[noitemsep]
\item[-] The subset $A$ is  \textit{basis-covering} if   
\begin{equation*}
\bigcup_{Q\in A} \cB(M_Q) = \cB(M).
\end{equation*}
\item[-] The subset $A$ is \textit{basis-intersecting} 
\begin{equation*}
\bigcap_{Q\in A} \cB(M_Q) \neq \emptyset.
\end{equation*}
\item[-]The subset $A$ is \textit{basis-connecting} if, for each $\beta \in \bigcup_{Q\in A} \cB(M_Q)$, the induced subgraph
\begin{equation*}
\Gamma_{M,w}[\set{v_Q}{Q\in A \text{ and } \beta\in \cB(M_Q)}]
\end{equation*}
is connected.
\end{itemize}

\begin{proposition}
	\label{prop:subdGood}
	For any even $\Delta$-matroid $M$ and $w\in \Dr_{M}$, the set $(\Delta_{M,w})^{\Top}$ is basis-covering and basis-connecting. Moreover,
	\begin{equation*}
	\bS_{M,w} \cong \varprojlim_{\Gamma_{M,w}} \bS_{M_Q}.
	\end{equation*}
\end{proposition}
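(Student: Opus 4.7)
The plan is to verify, in turn, the two combinatorial properties of $(\Delta_{M,w})^{\Top}$ and then to deduce the inverse-limit identity by showing that a compatible system indexed by $\Gamma_{M,w}$ extends uniquely to one indexed by the full poset $\Delta_{M,w}$.

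For basis-covering, given $\beta\in\cB(M)$, the vertex $e_\beta$ of $Q_M$ must lie in some top-dimensional cell $Q$ of $\Delta_{M,w}$, because these cells cover $Q_M$, and any point of $Q$ that is already a vertex of the ambient polytope $Q_M$ is automatically a vertex of $Q$. Hence $\beta\in\cB(M_Q)$.

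For basis-connecting, fix $\beta$ and set $v=e_\beta$. The star of $v$ in $\Delta_{M,w}$ is a regular polyhedral subcomplex whose link is a PL ball (if $v\in\partial Q_M$) or PL sphere, in either case pure and connected of positive dimension whenever $v$ lies in more than one top-dimensional cell. Thus any two top-dimensional cells containing $v$ can be joined by a chain of top-dimensional cells all containing $v$, in which consecutive cells share a codimension-one face that also contains $v$; translated to $\Gamma_{M,w}$, this is exactly connectivity of the relevant induced subgraph.

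For the inverse-limit identity, the $\Gamma_{M,w}$-indexed diagram is the restriction of the $\Delta_{M,w}$-indexed diagram to the sub-diagram spanned by cells of codimension $\leq 1$, so there is a natural morphism $\bS_{M,w}\to\varprojlim_{\Gamma_{M,w}}\bS_{M_Q}$ obtained from the universal property of the larger limit. To build an inverse, take any compatible system $(x_Q)$ for $\Gamma_{M,w}$ and extend to $\Delta_{M,w}$ by setting $x_{Q'}:=\varphi_{M_P,M_{Q'}}(x_P)$ for any top-dimensional $P$ with $Q'\leq P$. Applying the star-connectivity argument of the previous step to the face $Q'$ in place of the vertex $v$, together with the functoriality relation $\varphi_{M_P,M_{Q'}}=\varphi_{M_e,M_{Q'}}\circ\varphi_{M_P,M_e}$ from Proposition \ref{prop:mapPlucker} and the compatibility of $(x_Q)$ across each edge $e$, shows that the extension is independent of the chosen $P$. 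Compatibility along arbitrary inclusions $Q'\leq Q$ in $\Delta_{M,w}$ is a further application of the same functoriality.

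The main obstacle is the star-connectivity statement at arbitrary codimension: one needs that, given a cell $Q'\in\Delta_{M,w}$, any two top-dimensional cells above $Q'$ can be connected through a path of top-dimensional cells above $Q'$ joined by codimension-one faces still containing $Q'$. This is the standard fact that the link of a cell in a regular polyhedral subdivision of a convex polytope is a PL ball or sphere, but it must be phrased carefully in the graph-theoretic language of $\Gamma_{M,w}$, and it is what powers both the basis-connecting property and the reduction of the poset limit to a graph limit.
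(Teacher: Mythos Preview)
Your argument is correct. The paper itself does not give a self-contained proof: it simply remarks that the statement is analogous to the Grassmannian case handled in \cite[Propositions~C.11--12]{Corey17}, and your sketch supplies precisely those details. The crucial ingredient you isolate---that for a regular subdivision of a polytope the maximal cells containing a fixed face $Q'$ are connected through codimension-one faces that still contain $Q'$---is exactly what is needed, and your justification via the link being a PL ball or sphere (equivalently, via the dual cell $(Q')^{\vee}$ being a polytope with connected $1$-skeleton) is the standard one. This single fact, applied first to vertices $e_{\beta}$ and then to arbitrary cells $Q'$, yields both the basis-connecting claim and the well-definedness of your extension from $\Gamma_{M,w}$ to $\Delta_{M,w}$; the functor-of-points phrasing makes the construction scheme-theoretic without further work.
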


\begin{proof}
	The analog of this proposition for limits of thin Schubert cells in the Grassmannian follows from \cite[Propositions~C.11-12]{Corey17}. The proof in the spinor strata case is analogous. 
\end{proof}

Next, we show how to compute the coordinate ring of inverse limits over the graphs $\Gamma_{M,w}$. First, in Wick coordinates, define
\begin{itemize}[noitemsep]
	\item[-] $B=\kk[q_{\lambda} \, : \, \lambda \in \bigcup_{Q\in A} \cB(M_Q)]$;
	\item[-] $I = \sum_{Q\in A} I_{M_Q} \cdot B$;
	\item[-] $S$ is the multiplicative semigroup generated by $\set{q_{\lambda}}{Q\in A \text{ and } \lambda \in \cB(M_Q)}$.
\end{itemize} 
Set $R(A)= S^{-1} B/I$. 

For affine coordinates, we must assume $A$ is basis-intersecting, say $\lambda$ lies in the intersection of all bases sets. After twisting each $M_Q$ by $\bn\setminus \lambda$, we may assume that $\lambda=\bn$.  Define
\begin{itemize}[noitemsep]
	\item[-] $B^x=\kk[x_{ij} \, : \, \bn\setminus ij \in \bigcup_{Q\in A} \cB(M_Q)]$;
	\item[-] $I^x = \sum_{Q\in A} I_{M_Q}^x \cdot B^x$;
	\item[-] $S^x$ is the multiplicative semigroup generated by
	\begin{equation*}
	\set{\pi_{Q}(\Pf(X[\lambda]))}{Q\in A \text{ and } \bn\setminus \lambda \in \cB(M_Q)}
	\end{equation*}
	where $\pi_Q$ is the composition 
	\begin{equation*}
	\kk[x_{ij}] \to \kk[x_{ij}]/\langle x_{ij} \, : \, \bn\setminus ij \notin \cB(M_Q) \rangle  \cong  B_{M_Q}^x \subset B^x.
	\end{equation*}
\end{itemize} 
Set $R^x(A)= (S^x)^{-1} B^x/I^x$.

\begin{proposition}
	\label{prop:coordinateRingLimit}
	If $A$ is basis-connecting,
	 then 
	\begin{equation*}
	\varinjlim_{\Gamma_{M,w}[A]} R_{M_Q} \cong R(A).
	\end{equation*}
	If, in addition, $\bn \in \cB(M_Q)$ for all $Q\in A$ (so $A$ is basis-intersecting), then
	\begin{equation*}
	\varinjlim_{\Gamma_{M,w}[A]} R_{M_Q}^x \cong R^x(A).
	\end{equation*}
\end{proposition}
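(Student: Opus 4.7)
The plan is to invoke the universal property of direct limits by constructing mutually inverse ring homomorphisms between $\varinjlim_{\Gamma_{M,w}[A]} R_{M_Q}$ and $R(A)$, treating the two cases ($q_\lambda$-coordinates and $x_{ij}$-coordinates) by a parallel argument. Throughout, I will use that the direct limit of the diagram indexed by $Q(\Gamma_{M,w}[A])$ is computed as the tensor product of the vertex rings $R_{M_Q}$ modulo the identifications induced by the edge rings $R_{M_e}$.

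First, I would construct a map $\Phi : \varinjlim R_{M_Q} \to R(A)$. For each top cell $Q \in A$, the inclusion of generating sets $\cB(M_Q) \subset \bigcup_{Q'\in A} \cB(M_{Q'})$ induces an inclusion $B_{M_Q} \hookrightarrow B$. Since $I_{M_Q} \cdot B \subset I$ and $S_{M_Q} \subset S$, this descends to $\alpha_Q : R_{M_Q} \to R(A)$ sending $q_\lambda \mapsto q_\lambda$. By Proposition \ref{prop:mapPlucker}, whenever $Q, Q' \in A$ are joined by an edge $e$, the face maps agree with $\alpha_Q$ and $\alpha_{Q'}$ on the image of $R_{M_e}$, so the $\alpha_Q$'s are compatible with the diagram and assemble into the desired $\Phi$.

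The inverse map $\Psi : R(A) \to \varinjlim R_{M_Q}$ is defined on generators by $\Psi(q_\lambda) = [q_\lambda]_{Q}$, the image of $q_\lambda \in R_{M_Q}$ in the direct limit for some (any) choice of $Q \in A$ with $\lambda \in \cB(M_Q)$. The key step, and the place where the hypothesis is essential, is well-definedness: if also $\lambda \in \cB(M_{Q'})$, basis-connectivity supplies a path $Q = Q_0, Q_1, \ldots, Q_k = Q'$ in $\Gamma_{M,w}[A]$ with $\lambda \in \cB(M_{Q_i})$ for all $i$. Along each edge $Q_i Q_{i+1}$, the element $q_\lambda$ lies in the coordinate ring of the common facet $\bS_{M_{e_i}}$, so $[q_\lambda]_{Q_i} = [q_\lambda]_{Q_{i+1}}$ in the direct limit; chaining yields $[q_\lambda]_Q = [q_\lambda]_{Q'}$. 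Hence $\Psi$ is a well-defined ring homomorphism on $B$. The ideal $I = \sum_Q I_{M_Q}\cdot B$ vanishes under $\Psi$ because each generator of $I_{M_Q}$ already vanishes in $R_{M_Q}$, and every element of $S$ is sent to a unit (it is invertible in some $R_{M_Q}$). Thus $\Psi$ factors through $R(A)$. That $\Phi \circ \Psi$ and $\Psi \circ \Phi$ are identities is immediate because both act as the identity on the distinguished generators.

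For the second isomorphism, the additional hypothesis $\bn \in \cB(M_Q)$ for all $Q \in A$ means that affine coordinates $R_{M_Q}^x$ are uniformly valid across $A$, and Proposition \ref{prop:mapXcoordinates} tells us that the face maps act trivially on the $x_{ij}$'s. The argument then repeats verbatim with $x_{ij}$ in place of $q_\lambda$: well-definedness of $\Psi^x(x_{ij}) = [x_{ij}]_Q$ uses that the set $\{v_Q : Q\in A,\ \bn\setminus ij \in \cB(M_Q)\}$ spans a connected subgraph of $\Gamma_{M,w}[A]$, which is precisely the basis-connecting hypothesis applied to $\beta = \bn\setminus ij$. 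The main obstacle in the whole proof is this well-definedness step, since it is the only place where the combinatorial hypothesis on $A$ enters in an essential way; everything else is a formal manipulation of the universal property and the explicit generators-and-relations descriptions of $R_{M_Q}$ and $R(A)$.
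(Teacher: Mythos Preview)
Your proposal is correct and follows essentially the same approach as the paper: construct the forward map from the colimit to $R(A)$ via the obvious inclusions of generators, construct the inverse on generators and use the basis-connecting hypothesis to prove well-definedness via a chain of adjacent cells, then check that the ideal is killed and units map to units. The paper writes out the $x$-coordinate case and declares the $q$-coordinate case similar, while you sketch the $q$-case first and then indicate the parallel $x$-argument, but the underlying strategy is identical.
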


\begin{proof}
	We prove the second statement, the first one is similar, see also \cite[Proposition~3.7]{Corey17}.  
	Let $\hat{A}$ denote the set of all $Q\in \Delta_{M,w}$ such that either $v_Q\in V(\Gamma_{M,w}[A])$ or $e_Q\in E(\Gamma_{M,w}[A])$. For each $Q\in \hat{A}$, we have a ring homomorphism $R_{M_Q}^{x} \to R^x(A)$ defined by $x_{ij}\mapsto x_{ij}$. These piece together to give a ring homomorphism $\Psi:\varinjlim_{\Gamma_{M,w}[A]} R_{M_Q}^{x} \to R^{x}(A)$. 
	We define a ring homomorphism $\Theta$ that is an inverse to $\Psi$.
	
	If $\bn\setminus ij \in \cB(M_Q)$ for some $Q\in A$ then set $\Theta(x_{ij}) = \varphi_{M_{Q}}^{\#}(x_{ij})$. Suppose $Q'\in A$ is another cell such that $\bn\setminus ij \in \cB(M_{Q'})$.  If $Q'' := Q\cap Q'$  lies in $\hat{A}$,  then $\varphi_{M_{Q}}^{\#}(x_{ij}) = \varphi_{M_{Q''}}^{\#}(x_{ij}) = \varphi_{M_{Q'}}^{\#}(x_{ij})$. Because $A$ is basis-connecting, there is a sequence $Q=Q_0, Q_1,\ldots, Q_k = Q'$ such that, for each $\ell$, we have $[n]\setminus ij \in \cB(M_{Q_{\ell}})$ and  $Q_{\ell} \cap Q_{\ell+1}$ is a facet of $Q_{\ell}$ and $Q_{\ell+1}$. Then
	\begin{equation*}
	\varphi_{M_{Q}}^{\#}(x_{ij}) = \varphi_{M_{Q_0\cap Q_{1}}}^{\#}(x_{ij}) = \varphi_{M_{Q_1}}^{\#}(x_{ij}) = \cdots = \varphi_{M_{Q'}}^{\#}(x_{ij})
	\end{equation*}
	so $\Theta:B^x \to \varinjlim_{\Gamma_{M,w}[A]} R_{M_Q}^{x}$ is well defined. Also, $\Theta(z)$ is invertible for any $z\in S^x$. Finally, we need to show that $I^{x} \subset \ker(\Theta)$. It suffices to show that $\Theta(zf) = 0$ for $z\in (S^x)^{-1}B^x$ and $f\in I_{M_Q}^x$ for some $Q\in A$, which follows from the fact that $\Theta(af) = \Theta(a)\varphi_{M_Q}^{\#}(f) = 0$. Therefore, $\Theta$ induces a ring homomorphism $\Theta: R^x(A) \to \varinjlim_{\Gamma_{M,w}[A]} R_{M_{Q}}^{x}$, which is clearly an inverse to $\Psi$. 
\end{proof}

Being basis-covering implies that the morphism $\init_w\bS_{M} \to \varprojlim_{\Gamma_{M,w}[A]}  \bS_{M_Q}$ is a closed immersion. 
\begin{proposition}
	\label{prop:basisCovering}
	If  $A\subset (\Delta_{M,w})^{\Top}$ is basis-covering, then  the morphisms $\init_w\bS_{M} \to \bS_{M_Q}$ induce a closed immersion  $\init_w\bS_{M} \hookrightarrow \varprojlim_{\Gamma_{M,w}[A]} \bS_{M_Q}$.
\end{proposition}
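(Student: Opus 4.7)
The plan is to mimic the proof of Theorem~\ref{thm:closedImmersion}, restricted to the subquiver $Q(\Gamma_{M,w}[A])$, with one extra ingredient: surjectivity of the dual ring map from the basis-covering hypothesis.

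First I would construct the morphism $\Phi_A:\init_w\bS_M \to \varprojlim_{\Gamma_{M,w}[A]}\bS_{M_Q}$. For each $Q\in A$, choose $u_Q\in \RR^n$ with $Q = \face_{(u_Q,1)}(Q_M^w)$, so that $M_Q = M_{u_Q}^w$, and take the morphism $\psi_{M,M_Q}:\init_w\bS_M\to \bS_{M_Q}$ from Proposition~\ref{prop:mapInitToStratum}. For each edge $e\in E(\Gamma_{M,w}[A])$ joining cells $Q,Q'\in A$, the edge records a common facet $M_e\lessdot M_Q, M_{Q'}$; the compatibility $\varphi_{M_Q,M_e}\circ \psi_{M,M_Q}=\psi_{M,M_e}=\varphi_{M_{Q'},M_e}\circ \psi_{M,M_{Q'}}$ is the same diagram chase used in the proof of Theorem~\ref{thm:closedImmersion}. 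Since $\kSch$ admits finite inverse limits, the universal property produces the canonical morphism $\Phi_A$.

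Next I would show that $\Phi_A$ is a closed immersion by checking that the dual ring homomorphism is surjective. Because $\Spec$ converts colimits of commutative rings into limits of affine schemes, $\Phi_A$ is dual to a homomorphism
\begin{equation*}
\Phi_A^{\#}:\varinjlim_{Q(\Gamma_{M,w}[A])} R_{M_Q}\longrightarrow S_M^{-1}B_M/\init_w I_M,
\end{equation*}
which on each factor $R_{M_Q}$ sends $q_\lambda\mapsto q_\lambda$ for $\lambda\in\cB(M_Q)$. The target is generated as a $\kk$-algebra by $\{q_\lambda^{\pm 1}:\lambda\in\cB(M)\}$. Since $A$ is basis-covering, every $\lambda\in\cB(M)$ lies in $\cB(M_Q)$ for some $Q\in A$, and in $R_{M_Q}$ the element $q_\lambda$ is already invertible (as it belongs to $S_{M_Q}$). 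Hence each generator of the target ring lies in the image of $\Phi_A^{\#}$, so $\Phi_A^{\#}$ is surjective and $\Phi_A$ is a closed immersion.

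The main potential obstacle is bookkeeping: one must make sure that the colimit is taken in the correct sense for the quiver $Q(\Gamma_{M,w}[A])$ (vertices for cells, auxiliary vertices for edges, with arrows from each cell to its incident edges) and that the morphism $\Phi_A^{\#}$ is genuinely well defined on this colimit. This is exactly the compatibility already verified above for Theorem~\ref{thm:closedImmersion}. Note that neither basis-intersecting nor basis-connecting is required: Proposition~\ref{prop:coordinateRingLimit} gives a concrete presentation $R(A)$ of the colimit under those hypotheses, but for the current statement we only need the image of $\Phi_A^{\#}$ to contain a generating set of the target, and this is precisely what ``basis-covering'' guarantees.
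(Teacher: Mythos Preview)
Your proposal is correct and follows essentially the same approach as the paper's proof: construct the morphism via the universal property of inverse limits, then verify that the induced ring homomorphism is surjective because basis-covering ensures every generator $q_{\lambda}$ of $S_M^{-1}B_M/\init_wI_M$ already lies in some $R_{M_Q}$. The paper's argument is just a terser version of what you wrote.
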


\begin{proof}
	The morphism $\init_w\bS_{M} \hookrightarrow \varprojlim_{\Gamma_{M,w}[A]} \bS_{M_Q}$ is defined by the universal property of inverse limits, and the induced morphism on coordinate rings is surjective because every basis of $M$ is a basis of some $\cB(M_Q)$ for $Q \in A$. 
\end{proof}

\begin{proposition}
	\label{prop:basisAll3}
	If $A\subset (\Delta_{w})^{\Top}$ is basis-covering, basis-intersecting, and basis-connecting, then $\varprojlim_{\Gamma_{w}[A]}\bS_{M_Q} $ is isomorphic to an open dense subvariety of $\AA^{{n\choose 2}}$. In particular, $\init_w \bS_{n}^{\circ} \cong \varprojlim_{\Gamma_{w}[A]} \bS_{M_Q}$ and these are smooth and irreducible of dimension ${n\choose 2}$. 
\end{proposition}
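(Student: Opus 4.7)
The aim is to identify $\varprojlim_{\Gamma_w[A]}\bS_{M_Q}$ with an open dense subvariety of $\AA^{\binom{n}{2}}$; smoothness, irreducibility, and the statement about $\init_w\bS_n^\circ$ will then follow by a dimension argument. Since $A$ is basis-intersecting, by twisting each $M_Q$ by a common element of $\bigcap_{Q\in A}\cB(M_Q)$ I may assume $\bn\in\cB(M_Q)$ for every $Q\in A$, so that affine coordinates are available uniformly. Applying Proposition \ref{prop:coordinateRingLimit} identifies the coordinate ring of the inverse limit with $R^x(A)=(S^x)^{-1}B^x/I^x$; basis-covering then forces $B^x=\kk[x_{ij}\,:\,0\le i<j\le n-1]$, a polynomial ring in $\binom{n}{2}$ variables, since every pair $\bn\setminus ij$ has $|\bn\setminus ij|$ of the parity of $n$ and hence lies in $E(n)=\bigcup_{Q\in A}\cB(M_Q)$.

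The technical heart is to show $I^x=0$. By construction, the generators of $I^x$ are the reduced Pfaffians $\pi_Q(\Pf(X[\bn\setminus\lambda]))$ for $Q\in A$ and $\lambda\in E(n)\setminus\cB(M_Q)$, where $\pi_Q$ sends $x_{ij}\mapsto 0$ whenever $\bn\setminus ij\notin\cB(M_Q)$. A monomial in the Pfaffian expansion survives $\pi_Q$ precisely when it comes from a perfect matching $\sigma$ of $\bn\setminus\lambda$ whose edges $\{i,j\}$ all satisfy $\bn\setminus ij\in\cB(M_Q)$. The key claim is that for the realizable even $\Delta$-matroid $M_Q$ of a top-dimensional cell of the matroidal subdivision $\Delta_w$, no such matching exists whenever $\lambda\notin\cB(M_Q)$: summing $\sum_a e_{\mu_a}=(k-1)e_{\bn}+e_\lambda$ over the pairs $\mu_a=\bn\setminus i_aj_a$ of such a matching exhibits $e_\lambda$ as a lattice point in the affine span of basis-indicators of $M_Q$, and via realizability together with the Gelfand--Serganova characterization of even $\Delta$-matroid polytopes (whose lattice points are vertices) this forces $\lambda\in\cB(M_Q)$, a contradiction. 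Hence each generator vanishes identically as a polynomial in $B^x$, and $I^x=0$.

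With $I^x=0$, $R^x(A)=(S^x)^{-1}\kk[x_{ij}]$ is the coordinate ring of the dense open subvariety of $\AA^{\binom{n}{2}}$ obtained by inverting the finite collection of nonzero polynomials $S^x$; in particular it is smooth and irreducible of dimension $\binom{n}{2}$. By Proposition \ref{prop:basisCovering} there is a closed immersion $\init_w\bS_n^\circ\hookrightarrow\varprojlim_{\Gamma_w[A]}\bS_{M_Q}$, and since $\dim\init_w\bS_n^\circ=\dim\bS_n^\circ=\binom{n}{2}$ matches the dimension of the irreducible target, the closed immersion must be an isomorphism, so $\init_w\bS_n^\circ$ inherits the same properties. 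The main obstacle is the combinatorial vanishing claim for the reduced Pfaffians; proving it rigorously is where the "top-cell" hypothesis on $Q$ and the realizability of $M_Q$ become essential, and a streamlined argument likely proceeds either by matroid-polytope lattice-point considerations or by directly exhibiting the natural morphism $\bS_n^\circ\to\bS_{M_Q}$ at the level of affine coordinates.
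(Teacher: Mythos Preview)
Your strategy diverges from the paper's at the crucial step, and the divergence contains a genuine gap. The paper does \emph{not} attempt to prove $I^x=0$ directly. Instead it argues by dimension: basis-intersecting and basis-connecting already give, via Proposition~\ref{prop:coordinateRingLimit}, that $\Spec R^x(A)=V(I^x)\cap D(S^x)$ is a locally closed subscheme of $\AA^{\binom{n}{2}}$, hence of dimension at most $\binom{n}{2}$. Basis-covering then supplies (Proposition~\ref{prop:basisCovering}) the closed immersion $\init_w\bS_n^\circ\hookrightarrow\Spec R^x(A)$, and since $\init_w\bS_n^\circ$ is a flat degeneration of $\bS_n^\circ$ it has dimension exactly $\binom{n}{2}$. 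Thus $V(I^x)\cap D(S^x)$ has full dimension inside the irreducible open set $D(S^x)$, so it equals $D(S^x)$ set-theoretically; since the ambient ring is a domain this forces $I^x\cdot (S^x)^{-1}B^x=0$, hence $I^x=0$ and the limit is open in $\AA^{\binom{n}{2}}$. The vanishing of $I^x$ is a \emph{consequence}, not an input.

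Your proposed combinatorial argument for $I^x=0$ does not work, and in fact the claim you formulate---that for each top cell $M_Q$ with $\bn\in\cB(M_Q)$ one has $\pi_Q(\Pf(X[\bn\setminus\lambda]))=0$ whenever $\lambda\notin\cB(M_Q)$---is false as stated. Your identity $\sum_a e_{\mu_a}=(k-1)e_{\bn}+e_\lambda$ only places $e_\lambda$ in the \emph{affine span} of vertices of $Q_{M_Q}$; but $Q$ is top-dimensional, so that affine span is all of $\RR^n$ and the observation is vacuous. The lattice-point property you invoke concerns points of the polytope itself, not of its affine span, and $e_\lambda$ need not lie in $Q_{M_Q}$. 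Concretely, take $M_0$ from subdivision~11 (Table~\ref{tab:dim4}), with $\cB(M_0)=\{0,1,2,012,023,123,024,124,034,134,234,01234\}$. Here $\bn=01234\in\cB(M_0)$, it is a realizable top cell, and $\lambda=3\notin\cB(M_0)$; yet the matching $\{\{0,2\},\{1,4\}\}$ of $\bn\setminus 3=\{0,1,2,4\}$ has $\bn\setminus 02=134$ and $\bn\setminus 14=023$ both in $\cB(M_0)$, so $\pi_{M_0}(\Pf(X[0124]))=-x_{02}x_{14}+x_{04}x_{12}\neq 0$ and $I_{M_0}^x\neq 0$ in this chart. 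What \emph{is} true is that $I^x=0$ in the chart coming from a \emph{common} basis of all the $M_Q$ (here $034$, not $\bn$), but your argument makes no use of that common-basis property and would, if valid, apply in any chart containing $\bn$. I see no direct combinatorial route here; the paper's dimension comparison is both simpler and complete.
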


\begin{proof}
	Being basis-intersecting and basis-connecting implies that $\varprojlim_{\Gamma_{w}[A]}\bS_{M_Q} $ is isomorphic to a locally-closed subvariety of $\AA^{{n\choose 2}}$ by Proposition \ref{prop:coordinateRingLimit}; in particular, the dimension of $\varprojlim_{\Gamma_{w}[A]}\bS_{M_Q} $ is at most ${n\choose 2}$. Because $A$ is basis-covering, there is a closed immersion $\init_w\bS_{n}^{\circ} \hookrightarrow \varprojlim_{\Gamma_{w}[A]}\bS_{M_Q}$ by Proposition \ref{prop:basisCovering}, and the dimension of $\init_{w}\bS_{n}^{\circ}$ is ${n\choose 2}$ since this is a flat degeneration of $\bS_{n}^{\circ}$. Therefore, the dimension of $\varprojlim_{\Gamma_{w}[A]}\bS_{M_Q}$  is exactly ${n\choose 2}$, so $\varprojlim_{\Gamma_{w}[A]}\bS_{M_Q}$  is an open subvariety of $\AA^{{n\choose 2}}$. The last statement follows from the fact that a closed immersion between integral affine schemes of the same dimension is an isomorphism, see \cite[Proposition~A.8]{Corey17}.
\end{proof}

\begin{corollary}
	\label{cor:subdGood}
	If $(\Delta_{w})^{\Top}$ is basis-intersecting, then $\init_w \bS_n^{\circ} \cong\bS_{w}$. Furthermore, they are smooth and irreducible varieties of dimension ${n\choose 2}$. 
\end{corollary}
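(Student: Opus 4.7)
The plan is to apply the machinery developed in Section 5 directly, taking $A = (\Delta_w)^{\Top}$. The hypothesis of the corollary supplies one of the three required properties of $A$, and the other two come for free from earlier results; then Proposition \ref{prop:basisAll3} closes the argument.

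First I would verify that the full set of top-dimensional cells $A = (\Delta_w)^{\Top}$ satisfies all three properties. By Proposition \ref{prop:subdGood}, $(\Delta_w)^{\Top}$ is automatically basis-covering and basis-connecting. By hypothesis, it is also basis-intersecting. Moreover, Proposition \ref{prop:subdGood} identifies the limit over the full poset $\Delta_w$ with the limit over the adjacency graph, so $\bS_w \cong \varprojlim_{\Gamma_w} \bS_{M_Q} = \varprojlim_{\Gamma_w[A]} \bS_{M_Q}$.

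Next, I would invoke Proposition \ref{prop:basisAll3} applied to this $A$. The conclusion is that $\varprojlim_{\Gamma_w[A]} \bS_{M_Q}$ is an open subvariety of $\AA^{n\choose 2}$ and hence smooth and irreducible of dimension ${n\choose 2}$, and that $\init_w \bS_n^{\circ}$ is smooth and irreducible of the same dimension. Theorem \ref{thm:closedImmersion} (specialized to $M$ with $\cB(M) = E(n)$) produces the closed immersion $\psi_{w}: \init_w \bS_n^{\circ} \hookrightarrow \bS_w$. Since both source and target are affine and irreducible of the same dimension ${n\choose 2}$, the closed immersion must be an isomorphism by the elementary fact cited at the end of the proof of Proposition \ref{prop:basisAll3} (any closed immersion between irreducible affine schemes of equal dimension is an isomorphism).

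There is no real obstacle here beyond bookkeeping: the substantive work has already been done in Theorem \ref{thm:closedImmersion} (producing the closed immersion), Proposition \ref{prop:subdGood} (collapsing the poset to the adjacency graph and establishing the two automatic properties), and Proposition \ref{prop:basisAll3} (extracting smoothness, irreducibility, and dimension from basis-covering, basis-intersecting, and basis-connecting hypotheses). The only point worth emphasizing is that the three properties of $A$ are precisely what is needed to conclude both that $\bS_w$ embeds in $\AA^{n\choose 2}$ as an open subvariety and that $\init_w \bS_n^{\circ}$ attains the expected dimension ${n\choose 2}$, so that the closed immersion is forced to be surjective.
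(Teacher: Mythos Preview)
Your proof is correct and follows exactly the same approach as the paper's: invoke Proposition~\ref{prop:subdGood} to obtain the basis-covering and basis-connecting properties (and the identification $\bS_w \cong \varprojlim_{\Gamma_w}\bS_{M_Q}$), then apply Proposition~\ref{prop:basisAll3}. You have simply unpacked more of the details (in particular, the isomorphism step via the closed immersion between equidimensional irreducible affine schemes) that the paper leaves implicit in its two-line proof.
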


\begin{proof}
	By Proposition \ref{prop:subdGood}, $(\Delta_{w})^{\Top}$ is already basis-covering and basis-connecting. The corollary now follows from Proposition \ref{prop:basisAll3}. 
\end{proof}

\begin{example}
	\label{ex:subd4}
	Up to $W(D_4)$-symmetry, there are only 3 matroid subdivisions of $\Delta(4)$. One is the trivial subdivision, and the adjacency graphs for the other two are recorded in Table \ref{fig:subdTS4}.  From this data, and Corollary \ref{cor:subdGood}, we conclude that $\init_w\bS_4^{\circ}$ is smooth and irreducible for all $w\in \TS_4^{\circ}$. 
	
	\begin{table}[h!]
		\centering\renewcommand\cellalign{lc}
		\begin{tabular}{|c|c|m{5.7cm}|}
			\hline
			$w$ & Adjacency graph & Matroids \\
			\hline
			\hline
			$r_3$ & \raisebox{-\totalheight+4mm}{\includegraphics[width=2.3cm]{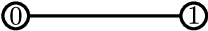}}
			&   \makecell{$M_{0}: \{\emptyset, 01, 02, 03, 13, 23, 0123\}$,  \\
				$M_{1}: \{\emptyset, 01, 02, 12, 13, 23, 0123\}$ }  \\
			\hline
			$r_2+r_3$ &  \raisebox{-\totalheight+10mm}{\includegraphics[width=2.5cm]{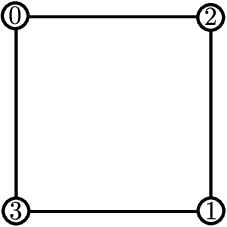}}
			& \makecell{$M_{0}:\{\emptyset, 01, 12, 13, 23, 0123\}$,\\  $M_{1}:\{\emptyset, 01, 02, 03, 23, 0123\}$,\\ $M_{2}:\{\emptyset, 01, 02, 12, 23, 0123\}$, \\ $M_{3}:\{\emptyset, 01, 03, 13, 23, 0123\}$}\\
			\hline
		\end{tabular}
		\caption{Matroidal subdivisions of $\Delta(4)$; the rays $r_i$ are listed in \eqref{eq:rays4}}
		\label{fig:subdTS4}
	\end{table}	
\end{example}

\begin{proposition}
\label{prop:subdAllBut1}
Suppose that there is a $Q \in (\Delta_{w})^{\Top}$ such that $v_Q$ is adjacent to exactly $2$ vertices $v_{Q_1}$, $v_{Q_2}$ which are themselves adjacent, and $A=(\Delta_{w})^{\Top} \setminus \{Q\}$ is basis-covering.  Then $A$ is basis-connecting. If $A$ is also basis-intersecting, then $\init_w \bS_n^{\circ}$ is smooth and irreducible.
\end{proposition}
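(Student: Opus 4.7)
The plan is to reduce the proposition to Proposition~\ref{prop:basisAll3} by verifying the one missing hypothesis, namely that $A$ is basis-connecting. Once this is established, the second assertion follows immediately: $A$ is basis-covering by hypothesis and basis-intersecting by assumption, so all three properties required by Proposition~\ref{prop:basisAll3} hold and the conclusion that $\init_w\bS_n^\circ$ is smooth and irreducible is automatic.

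To show $A$ is basis-connecting, I will fix a basis $\beta \in \bigcup_{Q'\in A} \cB(M_{Q'})$ and set $S_\beta = \set{v_{Q'}}{Q' \in A,\ \beta \in \cB(M_{Q'})}$; the goal is then to prove that the induced subgraph $\Gamma_w[S_\beta]$ is connected. The key input will be Proposition~\ref{prop:subdGood}, which tells me that $(\Delta_w)^{\Top}$ is itself basis-connecting, so the subgraph $T$ of $\Gamma_w$ induced by \emph{all} top-dimensional cells whose $\Delta$-matroid contains $\beta$ is already connected. If $\beta \notin \cB(M_Q)$, then $T = \Gamma_w[S_\beta]$ and there is nothing to prove. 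Otherwise $T = \Gamma_w[S_\beta \cup \{v_Q\}]$, with $S_\beta$ nonempty by basis-covering of $A$, and I must verify that deleting $v_Q$ from $T$ preserves connectivity.

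Here I will use the hypothesis that $v_Q$ has only $v_{Q_1}$ and $v_{Q_2}$ as neighbors in $\Gamma_w$, and a fortiori in $T$. Splitting into cases: if at most one of $v_{Q_1}, v_{Q_2}$ lies in $T$, then $v_Q$ is a leaf in $T$ and its deletion preserves connectivity. If both $v_{Q_1}, v_{Q_2}$ lie in $T$, then the triangle hypothesis, i.e., the presence of the edge $v_{Q_1}v_{Q_2}$ in $\Gamma_w$, means that any path in $T$ traversing $v_Q$ can be rerouted through this direct edge, so $\Gamma_w[S_\beta] = T \setminus \{v_Q\}$ remains connected.

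The main obstacle is precisely this last sub-case: without the triangle hypothesis, deleting a degree-$2$ internal vertex from a connected graph can genuinely disconnect it, and the claim would fail. The triangle assumption is exactly what is needed to bypass $v_Q$ and inherit the connectivity supplied by Proposition~\ref{prop:subdGood}.
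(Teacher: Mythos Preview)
Your proposal is correct and follows essentially the same approach as the paper: both invoke Proposition~\ref{prop:subdGood} to get connectivity of the full induced subgraph on $\beta$, then argue that deleting $v_Q$ either removes a leaf (when at most one of $v_{Q_1},v_{Q_2}$ is present) or is harmless because the triangle edge $v_{Q_1}v_{Q_2}$ reroutes any path through $v_Q$, and finally appeal to Proposition~\ref{prop:basisAll3}. The paper's proof is organized identically.
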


\begin{proof}
First, we show that $A$ is basis-connecting. Let $\beta \in E(n)$ and
\begin{align*}
&H_{\beta} = \Gamma_{w}[\set{v_{Q'}}{Q'\in (\Delta_{w})^{\Top} \text{ and } \beta\in \cB(M_{Q'})}], \\
&H_{\beta}' = \Gamma_{w}[\set{v_{Q'}}{Q'\in A \text{ and } \beta\in \cB(M_{Q'})}].
\end{align*}
By Proposition \ref{prop:subdGood}, $H_{\beta}$ is connected. If $\beta \notin \cB(M_Q)$, then $H_{\beta}' = H_{\beta}$ and hence connected. Suppose $\beta\in \cB(M_{Q})$.  By hypothesis, there is an edge between $v_{Q_1}$ and $v_{Q_2}$.   If $v_{Q_1},v_{Q_2}\in V(H_{\beta}')$, then there is still an edge of $H_{\beta}'$ between $v_{Q_1},v_{Q_2}$, and hence $H_{\beta}'$ is connected. Otherwise, either $v_{Q_1}$ or $v_{Q_2}$ is in $V(H_{\beta}')$ since $H_{\beta}$ is connected. This means that $v_{Q}$ is a leaf-vertex of $H_{\beta}$, so $H_{\beta}'$ is connected. Therefore, $A$ is basis-connected. The last statement now follows from Proposition \ref{prop:basisAll3}.  
\end{proof}

\begin{lemma}
	\label{lem:pullbackClosedImmersion}
	Suppose we have a  pullback diagram of finite-dimensional affine schemes
	\begin{equation*}
	\xymatrix
	{{W\times_Z X}\ar[r]^-{} \ar[d]_-{f'} & X \ar[d]^f\\
		{W} \ar[r]^{} & Z}
	\end{equation*} 
	where $f:X\to Z$ is a closed immersion, $W$ is irreducible, and $W\times_Z X$ has the same dimension as $W$. Then $W\times_Z X \cong W$.  
\end{lemma}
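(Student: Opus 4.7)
The plan is to exploit the fact that closed immersions are stable under base change. Since $f\colon X\to Z$ is a closed immersion, the base-changed morphism $f'\colon W\times_Z X\to W$ is also a closed immersion. Writing $W=\Spec A$, this corresponds to a surjection $A\twoheadrightarrow A/I$ for some ideal $I\subset A$, identifying $W\times_Z X$ with $\Spec(A/I)$. The statement therefore reduces to showing $I=(0)$.

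Next I would argue from irreducibility together with the dimension hypothesis. Because $W$ is irreducible, $A$ has a unique minimal prime, namely the nilradical $\mathfrak{p}_0=\sqrt{0}$, and $\dim W=\dim(A/\mathfrak{p}_0)$. By hypothesis $\dim V(I)=\dim(A/I)=\dim W$. If $V(I)$ were a proper closed subset of $W$, irreducibility of $W$ would force $\dim V(I)<\dim W$, a contradiction. Hence $V(I)=W$ as topological spaces, which gives the set-theoretic inclusion $I\subseteq \mathfrak{p}_0$.

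The main obstacle is passing from $I\subseteq \sqrt{0}$ to the scheme-theoretic equality $I=(0)$, which requires that $W$ be reduced. In all intended applications, $W$ arises either as an open subset of an affine space or as a limit of the (smooth) $\Delta$-matroid strata $\bS_{M_Q}$, and is therefore a variety in the usual sense, so $\sqrt{0}=0$ and we conclude $I=(0)$ and $f'$ is an isomorphism. Viewed this way, the lemma is simply the base-change-compatible reformulation of \cite[Proposition~A.8]{Corey17}, which asserts that a closed immersion between irreducible affine schemes of the same (finite) dimension is an isomorphism; the only new content is that the ambient map $f'$ producing the closed immersion is obtained from $f$ by pullback.
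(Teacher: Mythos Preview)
Your approach coincides with the paper's: observe that $f'$ is a closed immersion by stability of closed immersions under base change, then apply \cite[Proposition~A.8]{Corey17}, which you have essentially unpacked. You are right that reducedness of $W$ is implicitly required for the step from $I\subseteq\sqrt{0}$ to $I=(0)$; in the applications (the proof of Theorem~\ref{thm:schonSpinor}) $W$ has already been shown to be smooth and irreducible, so this hypothesis is satisfied.
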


\begin{proof}
	The morphism $f':W\times_Z X \to W$ is a closed immersion because closed immersions are preserved by arbitrary base change \cite[Exercise~II.3.11(a)]{Hartshorne}, so $f'$ is an isomorphism by \cite[Proposition~A.8]{Corey17}.
\end{proof}

\begin{theorem}
	\label{thm:schonSpinor}
For $n\leq 5$, the initial degenerations of $\bS_n^{\circ}$ are smooth and irreducible. 
\end{theorem}

\begin{proof}
	For $n=1,2,3$, $\bS_n = \PP(\kk^{E(n)})$, so the initial degenerations of $\bS_n^{\circ}$ are clearly smooth and irreducible. 	The $n=4$ case is handled in Example \ref{ex:subd4}, but we can also see this directly because $I_4$ is a principal ideal. Up to $W(D_4)$-symmetry, there are only $3$ cones of $\TS_4^{\circ}$. Representative weight vectors are
	\begin{equation*}
	w_0=0, \hspace{20pt} w_1 = f_{03}+f_{12}, \hspace{20pt} w_{2} = f_{02}+f_{12}+f_{03}+f_{13}.
	\end{equation*} 
	The initial ideals are
	\begin{align*}
	&\init_{w_0}I_4 = \langle q_{\emptyset} q_{0123} - q_{01}q_{23} + q_{02}q_{13} - q_{03}q_{12} \rangle, \\
	&\init_{w_1}I_4 = \langle q_{\emptyset} q_{0123} - q_{01}q_{23} + q_{02}q_{13} \rangle, \\
	&\init_{w_2}I_4 = \langle q_{\emptyset} q_{0123} - q_{01}q_{23} \rangle.
	\end{align*}
	These define smooth and irreducible varieties in the dense torus of $\PP(\kk^{E(4)}) \cong \PP^{7}$.

Now consider $n=5$. The subdivisions are listed in Appendix \ref{appendix:subdivisionsn5}. If $\Delta_w$ is the subdivision 1, 2, 3, 5, 8, 11, 13, or 16, then $\init_w\bS_5^{\circ}$ is smooth and irreducible by Proposition \ref{prop:subdGood}. Next, suppose $\Delta_{w}$ is one of the subdivisions 0, 4, 6, 7, 9, 12, 14, 17, or 18. Observe that, for each of these subdivisions, $\{0\}$ is a basis of $M_Q$ for all but one $Q \in  (\Delta_{w})^{\Top}$; the matroids missing $\{0\}$ are $M_{0}$, $M_{4}$, $M_{5}$, $M_{4}$, $M_{3}$, $M_{9}$, $M_{4}$, $M_{11}$, $M_{4}$, respectively. For each of these subdivisions, $v_{Q_{M_i}}$ has exactly two adjacent vertices, which are themselves adjacent. So $\init_w\bS_5^{\circ}$ is smooth and irreducible by Proposition \ref{prop:subdAllBut1}. The only subdivisions left are 10 and 15.
	
Consider subdivision 10 in Table \ref{tab:dim4}. Denote the $i$-th vertex by $v_i$, its polytope by $Q_i$, and its $\Delta$-matroid by $M_i$. For the edge between $v_i$ and $v_j$, denote by $Q_{ij}$ the corresponding polytope, and $M_{ij}$ its $\Delta$-matroid.  The set $A=\{Q_0,Q_1,Q_2,Q_3,Q_4\}$ is basis-connecting. Also, we see that $I_{M_i}^{x} = \langle 0 \rangle$ for $i=1,3,4$ and $I_{M_i}^{x} = \langle x_{02}x_{13}-x_{03}x_{12} \rangle$ for $i=0,2$. Therefore, the coordinate ring of $\varprojlim_{H}\bS_{M}$, where $H=\Gamma_{w}[v_{0},v_{1},v_{2},v_{3},v_{4}]$, is isomorphic to
	\begin{align*}
	R^x(A) = (S^x)^{-1}\kk[x_{ij}^{\pm} \, | \, 0\leq i<j\leq 4] / (x_{02}x_{13}-x_{03}x_{12})
	\end{align*}
	where $S^x$ is some finitely-generated multiplicative semigroup. Because we can solve for, e.g., $x_{02}$, we see that $\varprojlim_{H}\bS_{M}$ is isomorphic to an open subvariety of $\GGm^{9}$. Therefore, $\varprojlim_{H}\bS_{M}$ is smooth and irreducible of dimension 9.  Next, consider the pair $(Q_5, Q_{25})$. Twist $M_{5}$ and $M_{25}$ by $1234$ to get the $\Delta$-matroids $N$ and $N'$ with bases
	\begin{align*}
	\cB(N) &= \{1, 2, 3, 012, 013, 123, 124, 134, 234, 01234 \}, \\
	 \cB(N') &= \{1, 2, 3, 012, 013, 124, 134, 234, 01234 \}.
	\end{align*}
	The rings of $R_{N}^{x}$ and $R_{N'}^{x}$ are 
	\begin{align*}
	R_{N}^{x} = (S_N^x)^{-1} \kk[x_{01}^{\pm}, x_{02}^{\pm}, x_{03}^{\pm}, x_{04}^{\pm}, x_{24}^{\pm}, x_{34}^{\pm}], \hspace{10pt}  R_{N'}^{x} = (S_{N'}^x)^{-1}\kk[x_{01}^{\pm}, x_{02}^{\pm}, x_{03}^{\pm}, x_{24}^{\pm}, x_{34}^{\pm}]
	\end{align*}
	and the map $\varphi_{N,N'}^{\#}: R_{N'}^{x} \to R_{N}^{x}$ is given by $x_{ij}\mapsto x_{ij}$, so $\varphi_{N,N'}:\bS_{N} \to \bS_{N'}$ is smooth and dominant with connected fibers. Set $H' = \Gamma_w[v_0,\ldots, v_5]$.  By \cite[Proposition~A.5]{Corey17}, we have an isomorphism
	\begin{equation*}
	\varprojlim_{H'}\bS_{M} \cong \varprojlim_{H}\bS_{M} \times_{\bS_{N'}} \bS_{N}
    \end{equation*}
	which is smooth and irreducible \cite[Proposition~A.2.3]{Corey17} of dimension 10 \cite[Proposition~III.9.5]{Hartshorne}. Next, observe that $M_{6}$ is equivalent to the $\Delta$-matroid $K$ from Example~\ref{ex:SpecialDeltaMatroid}. 	Thus $\varprojlim_{\Gamma_w}\bS_{M}$ fits into the pullback diagram
	\begin{equation*}
	\xymatrix
	{
		{\varprojlim_{\Gamma_w}\bS_{M}}\ar[r]^-{} \ar[d]_-{f} & \bS_{M_6} \ar[d]^{(\varphi_{M_6,M_{06}}, \varphi_{M_6,M_{56}})}\\
		{\varprojlim_{H'}\bS_{M}} \ar[r]^{} & \bS_{M_{06}} \times \bS_{M_{56}} 
	}
	\end{equation*} 
	The dimension of  $\varprojlim_{\Gamma_w}\bS_M$ is at least 10 by Theorem \ref{thm:closedImmersion} and the fact that $\init_w\bS_{5}^{\circ}$ a flat degeneration of $\bS_{5}^{\circ}$, which is 10-dimensional. We conclude that $\varprojlim_{\Gamma_w}\bS_{M}$ is smooth an irreducible of dimension 10 by Lemma \ref{lem:pullbackClosedImmersion}, and therefore so is $\init_{w}\bS_{5}^{\circ}$.

	Finally, we consider subdivision 15. We retain the notation convention for the vertices, polytopes, and $\Delta$-matroids.  The set $A=\{Q_0,\ldots, Q_{8}\}$ is basis-connected, so we may compute its coordinate ring using Proposition \ref{prop:coordinateRingLimit}. We see that $I_{M_i}^{x} = \langle 0 \rangle$ for $i=0,1,2,3,5,7,8$ and $I_{M_i}^{x} = \langle x_{02}x_{13}-x_{03}x_{12}\rangle$ for $i=4,6$. By Proposition \ref{prop:coordinateRingLimit}, the coordinate ring of $\varprojlim_{H}\bS_{M_i}$, where $H = \Gamma_w[v_0,\ldots, v_8]$, is
	\begin{align*}
	R^x(A) \cong S^{-1}\kk[x_{ij}^{\pm} \, : \, 0\leq i<j\leq 4] / (x_{02}x_{13}-x_{03}x_{12})
	\end{align*}
	where $S$ is some finitely-generated multiplicative semigroup. As in the previous case, we deduce that $\varprojlim_{H}\bS_{M_i}$ is isomorphic to an open subvariety of $\GGm^{9}$, and therefore smooth and irreducible of dimension 9. 
	
	Next, consider the pair $(Q_9, Q_{69})$. Twist $M_{9}$ and $M_{69}$ by $1234$ to get the $\Delta$-matroids $N$ and $N'$ with bases
	\begin{align*}
	\cB(N) &= \{1, 2, 3, 012, 013, 123, 134, 234, 01234 \}, \\
	\cB(N') &= \{1, 2, 3, 012, 013, 134, 234, 01234\}.
	\end{align*}
	The rings of $R_{N}^{x}$ and $R_{N'}^{x}$ are 
	\begin{align*}
	R_{N}^{x} = \kk[x_{01}^{\pm}, x_{02}^{\pm}, x_{04}^{\pm}, x_{24}^{\pm}, x_{34}^{\pm}], \hspace{10pt}  R_{N'}^{x} = \kk[x_{01}^{\pm}, x_{02}^{\pm}, x_{24}^{\pm}, x_{34}^{\pm}]
	\end{align*}
	and the map $\varphi_{N,N'}^{\#}: R_{N'}^{x} \to R_{N}^{x}$ is given by $x_{ij}\mapsto x_{ij}$, so $\varphi_{N,N'}:\bS_{N} \to \bS_{N'}$ is smooth and dominant with connected fibers. Set $H' = \Gamma_w[v_0,\ldots, v_9]$.  By \cite[Proposition~A.5]{Corey17}, we have an isomorphism
	\begin{equation*}
	\varprojlim_{H'}\bS_{M} \cong \varprojlim_{H}\bS_{M} \times_{\bS_{N'}} \bS_{N}
	\end{equation*}
	which is smooth and irreducible of dimension 10 (similar to the previous case). Finally, $\Delta$-matroids $M_{10}$ and $M_{11}$ are equivalent to the matroid $K$ from Example \ref{ex:SpecialDeltaMatroid}, and by an argument similar to the previous case, we conclude that $\init_w\bS_{5}^{\circ}$ is smooth and irreducible.
\end{proof}

\section{Log canonical model of $\chow{\bS_{5}}{H}$}
\label{sec:chow}

\subsection{Sch\"on compactifications}
Throughout this section, we use the following notation for fans and toric varieties that is consistent with \cite{Fulton}. Given a finite rank lattice $N$, let $T_{N}$ be its torus, and given a torus  $T$ let $N_T$ be its cocharacter lattice. If $\tau$ is a rational polyhedral cone, let $N_{\tau}$ be the saturated sublattice of $N$ generated by $\tau\cap N$, let $N(\tau) = N/N_{\tau}$, and let $\St(\tau)$ be the star of $\tau$, viewed as a fan in $N(\tau)_{\RR}$. If $\Sigma$ is a rational polyhedral fan in $N_{\RR}$, denote by $|\Sigma|$ its support. If $\Sigma$ is also pointed, denote by $X(\Sigma)$ its toric variety.

Suppose $Y^{\circ}$ is a closed subvariety of an algebraic torus $T$. The closure $Y$ of $Y^{\circ}$ in a $T$-toric variety $X(\Sigma)$ is a \textit{tropical}, resp. \textit{sch\"on}, \textit{compactification} if the multiplication map $Y \times T \to X(\Sigma)$ is flat, resp. smooth, and surjective; in either case $|\Sigma| = \Trop(Y^{\circ})$ \cite{Tevelev}.  The variety $Y^{\circ}$ is \textit{sch\"on} if it admits a sch\"on compactification, equivalently, if $\init_w Y^{\circ}$ is smooth for each $w\in \Trop(Y^{\circ})$ \cite[Proposition~3.9]{HelmKatz}. If $Y^{\circ}$ is sch\"on, then the closure of $Y^{\circ}$ inside any toric variety $X(\Sigma)$ with  $|\Sigma| = \Trop(Y^{\circ})$ is a sch\"on compactification \cite[Theorem~1.5]{LuxtonQu}.

A rational pointed polyhedral fan $\Sigma$ in $N_{\RR}$ is \textit{strictly simplicial} if, for each cone $\tau$ of $\Sigma$, its rays can be extended to an integral basis of $\Sigma$. The following proposition is well known to the experts, but for convenience we sketch a proof here. 

\begin{proposition}
	\label{prop:schonSmoothSNC}
	Suppose $Y^{\circ}$ is sch\"on and $\Sigma$ is a strictly simplicial fan with support $\Trop(Y^{\circ})$. Then the closure $Y$ of $Y^{\circ}$ in $X(\Sigma)$ is smooth and its boundary $B = Y\setminus Y^{\circ}$ is a simple normal crossings divisor. 
\end{proposition}

\begin{proof}
	As stated earlier, $Y$ is a sch\"on compactification of $Y^{\circ}$, so the multiplication map $Y\times T \to X(\Sigma)$ is smooth and surjective. Because $\Sigma$ is strictly simplicial, the toric variety $X(\Sigma)$ is smooth and its toric boundary is a simple normal crossings divisor. So $Y\times T$ is smooth, and therefore so is $Y$. Moreover, since the toric boundary of $X(\Sigma)$ pulls back to $B \times T$ under the smooth and surjective multiplication map, we have that $B \times T$, and therefore $B$, is a simple normal crossings divisor by \cite[\href{https://stacks.math.columbia.edu/tag/0CBQ}{Lemma 0CBQ}]{stacks-project}.
\end{proof}

Let $Y$ be a sch\"on compactification of $Y^{\circ}$ with ambient toric variety $X(\Sigma)$, and set $B=Y\setminus Y^{\circ}$. Then $B$ is divisorial and $Y$ has toroidal singularities \cite[Theorem~1.4]{Tevelev}. Given a cone $\tau$ of $\Sigma$, denote by $Y_{\tau}$ the intersection of $Y$ with the stratum of $X(\Sigma)$ corresponding to $\tau$.  The following proposition gives a sufficient condition for $K_{Y}+B$ to be ample, compare to \cite[Theorem~4.9]{LuxtonQu} and \cite[Theorem~1.3]{Corey17}. 

\begin{proposition}
	\label{prop:KYBAmple}
	 If $\init_{w}Y^{\circ}$ is smooth and irreducible for all $w\in \Trop(Y^{\circ})$, and $|\St(\tau)|$ is not preserved by a rational subspace of $N(\tau)_{\RR}$ for all $\tau \in \Sigma$, then $K_{Y}+B$ is ample (where $B=Y\setminus Y^{\circ}$).
\end{proposition}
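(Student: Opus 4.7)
The plan is to mimic the strategy used in \cite[Theorem~4.9]{LuxtonQu} and in its adaptation \cite[Theorem~1.3]{Corey17}, which both establish ampleness of $K_{Y}+B$ for sch\"on compactifications under closely related hypotheses. The first hypothesis---that every $\init_{w}Y^{\circ}$ is smooth and irreducible---is exactly the definition of sch\"onness of $Y^{\circ}$. Combined with \cite[Theorem~1.4]{Tevelev} this gives that $Y$ is a sch\"on compactification with toroidal singularities along $B$, so that $B$ is a (toroidal) boundary divisor and $(Y,B)$ is a log canonical pair with $K_{Y}+B$ being $\QQ$-Cartier. In particular ampleness may be tested stratum-by-stratum using the Kleiman/Nakai--Moishezon criterion on $Y$ together with its boundary strata $Y_{\tau}$.

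The heart of the argument is an inductive reduction along the stratification via toroidal adjunction. For each $\tau\in\Sigma$, choosing $w$ in the relative interior of $\tau$ identifies $Y_{\tau}$ with a sch\"on subvariety of the torus $T_{N(\tau)}$ whose tropicalization is $|\St(\tau)|$; one then obtains $(K_{Y}+B)|_{Y_{\tau}}=K_{Y_{\tau}}+B_{\tau}$, where $B_{\tau}$ is the toroidal boundary of $Y_{\tau}$ inside $X(\St(\tau))$. The second hypothesis is precisely what forces each stratum $Y_{\tau}$ to be proper: if $|\St(\tau)|$ were preserved by a nontrivial rational subspace $L\subset N(\tau)_{\RR}$, then the subtorus of $T_{N(\tau)}$ with cocharacter lattice $L\cap N(\tau)$ would act on $Y_{\tau}$ with noncompact orbits, ruling out properness and hence ampleness. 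Moreover, this non-preservation hypothesis is inherited by the iterated star fans, which sets up a clean downward induction on $\dim\tau$: at maximal cones $Y_{\tau}$ is a point, and at each deeper stage ampleness of $K_{Y_{\tau}}+B_{\tau}$ follows from the inductive hypothesis on its boundary strata together with sch\"onness of $Y_{\tau}$ (which ensures no initial degeneration of $Y_{\tau}$ collapses a curve on which the log canonical class could be non-positive). Combining ampleness on every stratum with the toroidal description of $K_{Y}+B$ yields global ampleness on $Y$.

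The main obstacle I expect is making the toroidal adjunction step fully rigorous in the sch\"on, non-unimodular setting: one must verify that $(K_{Y}+B)|_{Y_{\tau}}=K_{Y_{\tau}}+B_{\tau}$ for a toroidal pair whose boundary is not \'etale-locally a strict normal crossing divisor on $Y$. This is essentially the content of the parallel arguments in \cite{LuxtonQu} and \cite{Corey17}, so the task is to confirm that they transport to the present generality. The secondary technical point is verifying that sch\"onness passes from $Y^{\circ}$ to each $Y_{\tau}^{\circ}$, which should follow from the compatibility of initial degenerations with the quotient by $T_{N_{\tau}}$ as in \cite[Section~3]{HelmKatz}; with these two ingredients in hand, the Nakai--Moishezon induction concludes the proof.
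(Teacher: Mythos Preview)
Your approach differs substantially from the paper's and is more circuitous. The paper does not set up an induction via toroidal adjunction and Nakai--Moishezon; instead it invokes \cite[Theorem~9.1]{HackingKeelTevelev2009} as a black box: for a sch\"on compactification with toroidal boundary, $K_Y+B$ is ample if and only if every irreducible stratum is log minimal. The irreducibility hypothesis on initial degenerations (which, incidentally, is strictly stronger than sch\"onness---sch\"onness requires only smoothness, so your first sentence is slightly off) guarantees via the identification $\init_w Y^\circ \cong Y_\tau \times T_{N_\tau}$ from \cite[Lemma~3.6]{HelmKatz} that each $Y_\tau$ is irreducible. Then \cite[Theorem~3.1]{HackingKeelTevelev2009} says a sch\"on very affine variety is log minimal if and only if it is not preserved by a nontrivial subtorus, and \cite[Lemma~5.2]{KatzPayne2011} together with $\Trop(Y_\tau)=|\St(\tau)|$ translates this into the stated tropical condition. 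That is the whole proof.

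Your reasoning about the second hypothesis also contains a conceptual slip: you claim it forces $Y_\tau$ to be \emph{proper}, but the closed stratum $Y_\tau\subset Y$ is always proper since $Y$ is. The actual role of the non-preservation hypothesis is to rule out a nontrivial subtorus action on the open stratum, which is the obstruction to log minimality---if such a subtorus acted, $K_{Y_\tau}+B_\tau$ would be trivial along its orbits, killing ampleness. This is about positivity, not properness. Your inductive scheme could likely be repaired and made to work, but it would amount to reproving \cite[Theorem~9.1]{HackingKeelTevelev2009}, and the adjunction obstacle you flag is exactly what that reference already handles.
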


\begin{proof}
	Because $Y$ has toroidal singularities, the divisor $K_Y+B$ is ample if and only if each irreducible stratum is log minimal \cite[Theorem~9.1]{HackingKeelTevelev2009}.  The stratum $Y_{\tau}$ is irreducible because the initial degenerations of $Y^{\circ}$ are irreducible, and $\init_{w} Y^{\circ} \cong Y_{\tau} \times T_{N_{\tau}}$ for any $w$ in the relative interior of $\tau$ \cite[Lemma~3.6]{HelmKatz}. Therefore, the irreducible strata of $Y$ are exactly  $Y_{\tau}$ for $\tau \in \Sigma$. 
	
   Given $\tau \in \Sigma$, the stratum $Y_{\tau}$ is sch\"on because $Y$ is a sch\"on compactification. Therefore, $Y_{\tau}$ is log minimal if and only if it is not preserved by a nontrivial subtorus $T_{N(\tau)}$ \cite[Theorem~3.1]{HackingKeelTevelev2009}, which occurs if and only if $\Trop(Y_{\tau})$ is not invariant under translation by a rational subspace of $N(\tau)_{\RR}$ \cite[Lemma~5.2]{KatzPayne2011}.  The proposition now follows from the fact that $\Trop(Y_{\tau}) = |\St(\tau)|$ \cite[Lemma~3.3.6]{MaclaganSturmfels2015}.
\end{proof}

We use the following lemma, which is essentially \cite[Lemma~7.2]{Corey17}, to determine whether $|\St(\tau)|$ is preserved under translation by a subspace.

\begin{lemma}
	\label{lem:notPreserved}
	Suppose $\Sigma$ is a fan with lineality space $L_{\RR}$, and let $\tau \neq L_{\RR}$ be a nonmaximal cone of $\Sigma$. If there is a collection $A_{\tau}$ of maximal cones such that 
	\begin{equation}
	\label{eq:notPreserved}
	\bigcap_{\sigma\in A_{\tau}}  (N_{\sigma})_{\RR} = (N_{\tau})_{\RR}
	\end{equation}
	then $|\St(\tau)|$ is not preserved under translation by any subspace of $N(\tau)_{\RR}$. 
\end{lemma}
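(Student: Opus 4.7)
The plan is to argue by contradiction. Suppose $|\St(\tau)|$ is preserved under translation by some nonzero rational subspace $\overline{W} \subset N(\tau)_{\RR}$. I want to derive that $\overline{W}$ must be contained in the linear span of each maximal cone of $\St(\tau)$, and then use the hypothesis to conclude $\overline{W} = 0$.

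For the first step, fix $\sigma \in A_{\tau}$; by the way this lemma will be applied I take $\sigma$ to contain $\tau$, so its image $\overline{\sigma} \subset N(\tau)_{\RR}$ is a maximal cone of $\St(\tau)$. Pick a point $\overline{x}$ in the relative interior of $\overline{\sigma}$. Because $\overline{\sigma}$ is maximal in $\St(\tau)$ and $\overline{x}$ lies in its relative interior, there is an open ball $B$ around $\overline{x}$ in $N(\tau)_{\RR}$ such that
\begin{equation*}
B \cap |\St(\tau)| \;=\; B \cap \overline{\sigma} \;\subset\; \Span(\overline{\sigma}) \;=\; (N_{\sigma})_{\RR}/(N_{\tau})_{\RR}.
\end{equation*}
For any $\overline{w} \in \overline{W}$ and sufficiently small $t > 0$, the translate $\overline{x} + t\overline{w}$ lies in $B \cap |\St(\tau)|$ by assumption, hence in $(N_{\sigma})_{\RR}/(N_{\tau})_{\RR}$, so $\overline{w} \in (N_{\sigma})_{\RR}/(N_{\tau})_{\RR}$. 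This yields $\overline{W} \subset (N_{\sigma})_{\RR}/(N_{\tau})_{\RR}$ for each $\sigma \in A_{\tau}$.

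The second step is to intersect these inclusions and apply hypothesis \eqref{eq:notPreserved}:
\begin{equation*}
\overline{W} \;\subset\; \bigcap_{\sigma \in A_{\tau}} (N_{\sigma})_{\RR}/(N_{\tau})_{\RR} \;=\; \Bigl(\bigcap_{\sigma \in A_{\tau}} (N_{\sigma})_{\RR}\Bigr)/(N_{\tau})_{\RR} \;=\; (N_{\tau})_{\RR}/(N_{\tau})_{\RR} \;=\; 0,
\end{equation*}
contradicting $\overline{W} \neq 0$.

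The one step requiring a bit of care is the existence of the ball $B$ with $B \cap |\St(\tau)| \subset \Span(\overline{\sigma})$; this is where the maximality of $\overline{\sigma}$ in $\St(\tau)$ is essential, so that other cones of $\St(\tau)$ meeting $\overline{x}$ must do so along a face containing $\overline{x}$, which (by relative interiority of $\overline{x}$ in $\overline{\sigma}$) forces them to lie in $\Span(\overline{\sigma})$ locally. This is the only geometrically non-formal ingredient; the rest is bookkeeping with the hypothesis. The rationality of $\overline{W}$ is in fact unused, so the conclusion holds for arbitrary translating subspaces.
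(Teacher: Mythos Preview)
Your proof is correct. The paper does not supply its own argument for this lemma; it merely records the statement and attributes it to \cite[Lemma~7.2]{Corey17}, so there is nothing in-paper to compare against. Your argument is the expected one: translation-invariance of $|\St(\tau)|$ forces $\overline{W}$ to lie in the linear span of each maximal cone of $\St(\tau)$, and the hypothesis then collapses the intersection of those spans to zero. Your remark that the cones of $A_\tau$ must contain $\tau$ (so that their images are genuinely cones of $\St(\tau)$) is apt; this is implicit in the statement and in every instance of Table~\ref{table:cones}, but not made explicit. Your closing observation that rationality of $\overline{W}$ plays no role is also correct.
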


\subsection{The Chow quotient of $\bS_n$}
As before, let $N = \ZZ^{E(n)}/\ZZ\!\cdot\!(1,\ldots,1)$ be the cocharacter lattice of the dense torus $T_N$ of $\PP(\kk^{E(n)})$, and let $L\leq N$ be the saturated subgroup from Equation \eqref{eq:Lineality}. This is the cocharacter lattice of an $n$-dimensional subtorus $T_L\cong H$ of $T_N$, and the scaling action of $T_N \curvearrowright \PP(\kk^{E(n)})$ restricted to $\bS_n$ is the action $H\curvearrowright \bS_n$ from Equation \eqref{eq:Haction}. Therefore, there is an embedding of Chow quotients $\chow{\bS_n}{H} \hookrightarrow \chow{\PP(\kk^{E(n)})}{H}$. By \cite{KapranovSturmfelsZelevinsky}, $\chow{\PP(\kk^{E(n)})}{H}$ is the toric variety of the secondary fan of $\Delta(n)$.  Denote by $\Sigma_{n}$ restriction of this fan to $\Dr(n)$, which has $L_{\RR}$ in its lineality space. Thus $\Dr(n)/L_{\RR}$ is the underlying set of the fan $\Sigma_{n}/L_{\RR}$, and because $\TS_{n}^{\circ}\subset \Dr(n)$, the Chow quotient $\chow{\bS_n}{H}$ is the closure of $\bS_n^{\circ}/H$ in $X(\Sigma_{n}/L_{\RR})$. 

\begin{remark}
	Each point of the affine variety $\bS_n^{\circ}$ is $H$-stable in the sense of geometric invariant theory since, for each $x\in \bS_n^{\circ}$, the stabilizer is $\{\pm I_n\}$, which is finite, and the orbit $H\cdot x = (T_{L} \cdot x) \cap \bS_n^{\circ}$ is closed in $\bS_{n}^{\circ}$. Therefore,  $\bS_{n}^{\circ} / H$ is a geometric quotient by standard results in GIT.   
\end{remark}

\begin{lemma}
	\label{lem:chowQuotientSchon}
	For $n\leq 5$, the initial degenerations of $\bS_{n}^{\circ}/H$ are smooth and irreducible; in particular, $\bS_{n}^{\circ}/H$ is sch\"on. 
\end{lemma}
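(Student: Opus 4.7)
The plan is to deduce this from Theorem \ref{thm:schonSpinor} by showing that taking initial degenerations commutes with the free quotient by $H'$. By Proposition \ref{prop:InHomogeneous}, the lattice $L$ lies in the lineality space of $\Trop(\bS_n^{\circ})$, so the ideal $I_n$ restricted to the dense torus is $L$-homogeneous, and hence so is every initial ideal $\init_w I_n$. In particular, each $\init_w \bS_n^{\circ} \subset T_N$ is $H$-stable. First I would verify that the principal $H'$-bundle $\bS_n^{\circ} \to \bS_n^{\circ}/H'$ degenerates to a principal $H'$-bundle $\init_w \bS_n^{\circ} \to \init_{\bar{w}}(\bS_n^{\circ}/H')$, giving a canonical isomorphism
\begin{equation*}
\init_{\bar{w}}(\bS_n^{\circ}/H') \;\cong\; (\init_w \bS_n^{\circ})/H'
\end{equation*}
for any lift $w \in \Trop(\bS_n^{\circ})$ of $\bar{w} \in \Trop(\bS_n^{\circ}/H') = \Trop(\bS_n^{\circ})/L_{\RR}$.

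Granted this identification, the conclusion is immediate. For $n \leq 5$, Theorem \ref{thm:schonSpinor} provides smoothness and irreducibility of $\init_w \bS_n^{\circ}$; since the quotient of a smooth irreducible variety by a free torus action is again smooth and irreducible, the same holds for $(\init_w \bS_n^{\circ})/H' \cong \init_{\bar{w}}(\bS_n^{\circ}/H')$. Schönness of $\bS_n^{\circ}/H'$ then follows by definition.

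The main obstacle is the displayed isomorphism, together with the assertion that $H'$ acts freely on each $\init_w \bS_n^{\circ}$. Both can be handled in tandem by choosing a splitting $N \cong L_{\sat} \oplus N/L_{\sat}$; this exhibits $\bS_n^{\circ}$ as a product $(\bS_n^{\circ}/H') \times T_L$, with defining ideal pulled back along the first projection. Initial forms then commute with this product decomposition, freeness of the $H'$-action on the degeneration is inherited from the trivial factor $T_L$, and the quotient formula above is read off directly from the decomposition. This parallels the Grassmannian analog developed in \cite[\S 7]{Corey17}.
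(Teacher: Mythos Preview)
Your proposal is correct and follows essentially the same approach as the paper: the paper's proof simply asserts the product decomposition $\init_{w}\bS_{n}^{\circ} \cong H' \times \init_{\overline{w}} (\bS_{n}^{\circ}/H')$ (citing the Grassmannian analog \cite[Lemma~7.1]{Corey17}) and then invokes Theorem~\ref{thm:schonSpinor}, which is exactly what your lattice-splitting argument establishes in slightly more detail.
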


\noindent Note that $\bS_n^{\circ}$ is sch\"on by Theorem \ref{thm:schonSpinor}.

\begin{proof}
	Compare to \cite[Lemma~7.1]{Corey17}. Let $w \in \TS_{n}$ and $\overline{w}$ the projection of $w$ to $N_{\RR}/L_{\RR}$. Then $\init_{w}\bS_{n}^{\circ} \cong H \times \init_{\overline{w}} (\bS_{n}^{\circ}/H)$. Therefore, for $n\leq 5$, the initial degenerations of $\bS_{n}^{\circ}/H$ are smooth and irreducible by Theorem \ref{thm:schonSpinor}.
\end{proof}

\noindent In particular, for $n\leq 5$, the Chow quotient $\chow{\bS_{n}}{H}$ is a sch\"on compactification of $\bS_n^{\circ}/H$. 
When $n=1,2,3$, the spinor variety is $\bS_{n} = \PP(\kk^{E(n)})$ and $\chow{\bS_{n}}{H}$ is a projective toric variety. The first interesting case is $n=4$.

\begin{proposition}
	\label{prop:n4LogCanonical}
	The log canonical divisor $K_{\chow{\bS_4}{H}} + B_4$ is ample, and therefore $\chow{\bS_{4}}{H}$ is log canonical.
\end{proposition}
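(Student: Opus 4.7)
The plan is to apply Proposition~\ref{prop:KYBAmple} to $Y^{\circ} = \bS_n^{\circ}/H'$ with its compactification $\chow{\bS_n}{H} \subset X(\Sigma_n/L_{\RR})$. The hypothesis on initial degenerations is immediate from Lemma~\ref{lem:chowQuotientSchon}, so the real task is to verify, for every cone $\tau$ of $\Sigma_n/L_{\RR}$, that $|\St(\tau)|$ is not preserved under translation by any nontrivial rational subspace of $N(\tau)_{\RR}$. For each such $\tau$ I would apply Lemma~\ref{lem:notPreserved} by exhibiting a collection $A_{\tau}$ of maximal cones with $\bigcap_{\sigma \in A_{\tau}} (N_{\sigma})_{\RR} = (N_{\tau})_{\RR}$.

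For $n \leq 3$, the description in \S\ref{sec:tropSpinor} gives $L_{\RR} = N_{\RR}$, so $\Sigma_n/L_{\RR}$ is the trivial fan, $\chow{\bS_n}{H}$ is a point, and the statement is vacuous. For $n = 4$, the fan $\Sigma_4/L_{\RR}$ sits in the $3$-dimensional space $N_{\RR}/L_{\RR}$; it has four rays with primitive generators $r_0, r_1, r_2, r_3$ and six maximal cones $\sigma_{ij} := \cone(r_i, r_j)$. A direct computation shows $r_0 + r_1 + r_2 + r_3$ equals the all-ones vector in $\ZZ^{E(4)}$, which vanishes in $N$; hence the images $\bar{r}_i$ satisfy the single relation $\bar{r}_0 + \bar{r}_1 + \bar{r}_2 + \bar{r}_3 = 0$ in $N_{\RR}/L_{\RR}$, and any three are linearly independent. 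The nonmaximal cones of $\Sigma_4/L_{\RR}$ are the origin and the four rays.

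For $\tau = \bar{r}_i$, I would take $A_{\tau} = \{\bar{\sigma}_{ij} : j \neq i\}$; then $\bigcap_{j \neq i} \Span(\bar{r}_i, \bar{r}_j) = \Span(\bar{r}_i) = (N_{\tau})_{\RR}$ by linear independence. For the apex $\tau = \{0\}$, I would take $A_{\tau} = \{\bar{\sigma}_{01}, \bar{\sigma}_{02}, \bar{\sigma}_{13}\}$: the first two planes intersect in $\Span(\bar{r}_0)$, and $\bar{r}_0 = -\bar{r}_1 - \bar{r}_2 - \bar{r}_3 \notin \Span(\bar{r}_1, \bar{r}_3)$ since $\bar{r}_1, \bar{r}_2, \bar{r}_3$ are independent, so the triple intersection is trivial. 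The main (mild) obstacle is the bookkeeping of choosing non-coplanar triples of maximal cones given the tetrahedral structure of $\Sigma_4/L_{\RR}$; with this done, Proposition~\ref{prop:KYBAmple} delivers ampleness of $K_{\chow{\bS_n}{H}} + B_n$ and hence log canonicity of $(\chow{\bS_n}{H}, B_n)$.
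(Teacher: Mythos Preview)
Your proposal is correct and follows essentially the same approach as the paper: apply Proposition~\ref{prop:KYBAmple} via Lemma~\ref{lem:chowQuotientSchon}, and verify the star condition using Lemma~\ref{lem:notPreserved}. The only minor differences are that the paper handles $n\leq 3$ by noting that $\chow{\bS_n}{H}$ is toric (rather than observing $L_{\RR}=N_{\RR}$), and for $n=4$ it invokes $G_4$-symmetry to reduce to a single ray and computes explicit dual vectors $v_0,v_1,v_2$ cutting out the $(N_{\sigma_i})_{\RR}$, whereas you work directly in $N_{\RR}/L_{\RR}$ using the relation $\bar r_0+\bar r_1+\bar r_2+\bar r_3=0$; you also treat the apex explicitly, which the paper leaves implicit.
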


\begin{proof}
	 The secondary fan of $\TS_{4}^{\circ}$ is described in \S \ref{sec:tropSpinor}. By Proposition \ref{prop:KYBAmple} and Lemma \ref{lem:chowQuotientSchon}, we need to show that $|\St(\tau)|$ is not invariant under translation by any rational subspace of $N(\tau)_{\RR}$. It suffices to consider the nonmaximal cones $\tau\in \Sigma_4$. Up to $W(D_4)$-symmetry, there is only one cone, e.g., $\tau=\RR_{\geq 0}\langle f_{03} + f_{12} \rangle +L_{\RR}$. There are 3 maximal cones containing $\tau$:
	\begin{gather*}
	\sigma_0=\RR_{\geq 0}\langle f_{\emptyset} + f_{0123} , f_{03} + f_{12}  \rangle +L_{\RR}, \hspace{15pt} \sigma_1=\RR_{\geq 0}\langle f_{01} + f_{23} , f_{03} + f_{12}  \rangle +L_{\RR}, \\
	\sigma_2=\RR_{\geq 0}\langle f_{02} + f_{13} , f_{03} + f_{12}  \rangle +L_{\RR}.
	\end{gather*}
	Given $v\in N_{\RR}$, denote by $v^{*}$ the linear functional given by $u\mapsto \bracket{u}{v}$. The subspace $(N_{\sigma_i})_{\RR}$ is the kernel of $v_i^*$ where
	\begin{gather*}
	v_0 = f_{01} - f_{02} - f_{13} + f_{23},\; v_1 = f_{\emptyset} - f_{02} - f_{13} + f_{0123},\; v_2 = f_{\emptyset} - f_{01} - f_{23} + f_{0123}.
	\end{gather*}
	Because $\Span\{v_0,v_1,v_2\}$ is 2-dimensional, the intersection in Equation \eqref{eq:notPreserved} holds for $A_{\tau} = \{\sigma_0,\sigma_1,\sigma_2 \}$, and hence  $|\St(\tau)|$ is not invariant under translation by any rational subspace of $N(\tau)_{\RR}$ by Lemma \ref{lem:notPreserved}.
\end{proof}

The rest of this section is devoted to the $n=5$ case. Recall that $\TS_{5}^{\circ} = \Dr(5)$, and the Gr\"obner fan of $\TS_{5}^{\circ}$ equals $\Sigma_5$.  Here is an explicit description of $\Sigma_5$, adapted from a \texttt{gfan} computation.   By \S \ref{sec:tropSpinor}, the lineality space of $\Sigma_5$ is the 5-dimensional subspace $L_{\RR}$ where $L\subset N$ is the saturated subgroup generated by 
\begin{align*}
\ell_i = \sum_{\lambda \ni i} f_{\lambda}, \hspace{10pt} m_i = \sum_{\lambda \not\ni i} f_{\lambda} \hspace{10pt} \text{ for } \hspace{10pt} 0\leq i \leq 4.
\end{align*} 
This fan has $36$ rays, which have primitive vectors (modulo $L_{\RR}$):
\begin{align*}
\begin{array}{llllll}
r_{0}= f_{0} & r_{1}= f_{1} & r_{2}= f_{2} & r_{3}= f_{3} & r_{4}= f_{4} & r_{5}= f_{012} \\
r_{6}= f_{013} & r_{7}= f_{023} & r_{8}= f_{123} & r_{9}= f_{014} & r_{10}= f_{024} & r_{11}= f_{124} \\
r_{12}= f_{034} & r_{13}= f_{134} & r_{14}= f_{234} & r_{15}= f_{01234} 
\end{array}
\end{align*}

\begin{align*}
\begin{array}{lll}
r_{16} = f_0+f_1+f_2+f_{012} & r_{17} = f_0+f_1+f_3+f_{013} & r_{18} = f_0+f_2+f_3+f_{023} \\ 
r_{19} = f_1+f_2+f_3+f_{123} & r_{20} = f_0+f_1+f_4+f_{014} & r_{21} = f_0+f_2+f_4+f_{024} \\ 
r_{22} = f_1+f_2+f_4+f_{124} & r_{23} = f_0+f_3+f_4+f_{034} & r_{24} = f_1+f_3+f_4+f_{134} \\ 
r_{25} = f_2+f_3+f_4+f_{234}
\end{array}
\end{align*}

\begin{align*}
\begin{array}{lll}
r_{26} = -f_0-f_1-f_2-f_{012} &	r_{27} = -f_0-f_1-f_3-f_{013} &	r_{28} = -f_0-f_2-f_3-f_{023} \\ 
r_{29} = -f_1-f_2-f_3-f_{123} &	r_{30} = -f_0-f_1-f_4-f_{014} &	r_{31} = -f_0-f_2-f_4-f_{024} \\
r_{32} = -f_1-f_2-f_4-f_{124} &	r_{33} = -f_0-f_3-f_4-f_{034} &	r_{34} = -f_1-f_3-f_4-f_{134} \\	
r_{35} = -f_2-f_3-f_4-f_{234}
\end{array}
\end{align*}
Notice that, e.g., $r_{25}$ and $r_{35}$ belong to the same $W(D_5)$-orbit modulo $L$ because
\begin{equation*}
t_{02}\cdot r_{25} - r_{35} = m_{1}
\end{equation*}
The cones of $\Sigma_5$ partition into 20 $W(D_5)$-orbits, which we denote by $\cO_{i}$ for $-1\leq i \leq 18$. Orbit representatives are listed in Table \ref{table:cones}.

The space $\TS_5^{\circ}$ is supported on a slightly coarser fan $\Sigma_5'$, which we now describe. 
\begin{lemma}
	\label{lem:star11}
 Each $\tau\in \cO_{11}$ has exactly two cones in its star, and these belong to the orbit $\cO_{18}$. Similarly, each $\sigma \in \cO_{18}$ has only one face in $\cO_{11}$. 
\end{lemma}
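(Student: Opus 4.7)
The plan is to exploit $G_5$-equivariance to reduce both assertions to finite combinatorial checks on orbit representatives. Since $\Sigma_5$ is $G_5$-stable and the action preserves the face-containment relation, the star of a cone and the set of its faces are both $G_5$-equivariant; consequently the number of elements of $\St(\tau)\cap \cO_{18}$ depends only on the orbit of $\tau$, and the number of faces of $\sigma$ lying in $\cO_{11}$ depends only on the orbit of $\sigma$. I would therefore fix representatives $\tau\in \cO_{11}$ and $\sigma\in \cO_{18}$ from Table \ref{table:cones} and verify each assertion for these specific cones.

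For the first claim, the task is to enumerate the cones of $\Sigma_5$ properly containing $\tau$. Via the bijection between cones of $\Sigma_5$ and matroidal subdivisions of $\Delta(5)$, this is the list of matroidal subdivisions strictly refining the subdivision $\Delta_\tau$ associated to $\tau$; alternatively it can be read off directly from the \texttt{gfan} output that produces $\Sigma_5$. One expects to find exactly two such cones $\sigma_1,\sigma_2$, and one then confirms they lie in $\cO_{18}$ by exhibiting explicit elements $g_1,g_2\in G_5$ with $g_i\cdot \sigma_i=\sigma$. The action of $G_5\cong \Sn{5}\ltimes (\Sn{2})^{4}$ on $N$ is explicit (Proposition \ref{prop:Gn} and \S\ref{sec:DeltaMatSym}), so such witnesses are easy to locate. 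The symmetric procedure handles the second claim: enumerate the faces of $\sigma$, partition them into $G_5$-orbits by matching against the representatives in Table \ref{table:cones}, and confirm there is a unique face lying in $\cO_{11}$.

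The main obstacle is purely bookkeeping. The $f$-vector $(1,36,280,960,1540,912)$ of $\TS_5^{\circ}$ modulo lineality, together with $|G_5|=1920$, means that matching individual cones to orbit representatives requires nontrivial data; in practice this is exactly what is recorded by the \texttt{gfan}-based computation already used to produce Table \ref{table:cones}. A useful independent sanity check is provided by double-counting the incidences $\{(\tau',\sigma')\in \cO_{11}\times \cO_{18} : \tau'\subset \sigma'\}$: writing $s_{11}=|\Stab_{G_5}(\tau)|$ and $s_{18}=|\Stab_{G_5}(\sigma)|$, the two counts are $2\,|G_5|/s_{11}$ and $|G_5|/s_{18}$, so the lemma predicts the identity $s_{11}=2\,s_{18}$, which can be verified directly from the explicit representatives.
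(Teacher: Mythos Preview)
Your approach is correct---reducing to orbit representatives via $G_5$-equivariance and then reading off the incidences from the \texttt{gfan} data would certainly prove the lemma---but the paper takes a more structural route that avoids appealing to a full fan enumeration. For the first claim, the paper fixes $\tau=\RR_{\geq 0}\langle r_3,r_4,r_6,r_9\rangle+L_{\RR}$ and exploits the fact that $G_5$ preserves the two ray orbits, so any maximal $\sigma\supset\tau$ must contain four single-$f_\lambda$ rays that are $G_5$-equivalent to the pattern $\{f_k,f_\ell,f_{ijk},f_{ij\ell}\}$. The orbits $\cO_{15},\cO_{16},\cO_{17}$ are then ruled out by the small combinatorial check that no twist $t_\mu$ applied to their single-$f_\lambda$ ray sets produces such a pattern. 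For $\cO_{18}$, whose single-$f_\lambda$ rays already coincide with those of $\tau$, the cones containing $\tau$ are in bijection with the $\Stab_{G_5}(S_{18})$-orbit of the remaining ray; the paper computes this stabilizer explicitly and finds the orbit has size two. The second claim is dispatched by observing that every facet of the representative $\sigma\in\cO_{18}$ other than the one omitting $r_{26}$ contains a ray from the second ray orbit and hence cannot lie in $\cO_{11}$. Your brute-force enumeration and the double-counting sanity check are perfectly valid alternatives, but the paper's argument is hand-verifiable and explains structurally why $\cO_{18}$ is the only orbit that can appear.
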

That is, the cones in $\cO_{18}$ partition in pairs, one pair $(\sigma_{\tau}, \sigma_{\tau'})$ for each $\tau\in \cO_{11}$. For example, if $\tau = \RR_{\geq 0} \langle r_{3}, r_{4}, r_{6}, r_{9} \rangle + L_{\RR}$, then 
\begin{equation}
	\label{eq:exampleTauStar}
\sigma_{\tau} = \RR_{\geq 0} \langle r_{3}, r_{4}, r_{6}, r_{9}, r_{25} \rangle + L_{\RR} 
\hspace{20pt}  \sigma_{\tau}' = \RR_{\geq 0} \langle r_{3}, r_{4}, r_{6}, r_{9}, r_{26} \rangle + L_{\RR}.
\end{equation}

\begin{proof}
	For the first part, it suffices to show that the star of $\tau = \RR_{\geq 0} \langle r_{3}, r_{4}, r_{6}, r_{9} \rangle + L_{\RR}$ consists of exactly $\sigma_{\tau}$ and $\sigma_{\tau}'$ from Equation \eqref{eq:exampleTauStar}. That $\St(\tau)$ contains no cones from the orbits $\cO_{15}, \cO_{16}, \cO_{17}$ follows from the fact that, for 
	\begin{gather*}
	S_{15} = \{f_{4}, f_{023}, f_{123}, f_{014}  \}, \hspace{5pt} S_{16} = \{f_{3}, f_{4}, f_{012}, f_{023}, f_{014}\}, \hspace{5pt}S_{17} = \{f_{3}, f_{4}, f_{012}, f_{013}\},
	\end{gather*}
	there is no even subset $\mu$ such that $t_{\mu} \cdot S_m$ contains some $\{f_{k},f_{\ell},f_{ijk}, f_{ij\ell}\}$ for some distinct $i,j,k,\ell$,  see \S\ref{sec:symmetries}. Now, the $W(D_5)$-stabilizer of $S_{18} = \{f_{3},f_{4},f_{013},f_{014}\}$ is 
	\begin{equation*}
	\Stab_{W(D_5)}(S_{18}) = \langle s_{(01)}, s_{(34)}, t_{01}, t_{34}, s_{(03)(14)}t_{03}  \rangle. 
	\end{equation*}
 	The $\Stab_{W(D_5)}(S_{18})$-orbit of $r_{25}= f_{2}+f_{3}+f_{4}+f_{234}$ modulo $L_{\RR}$ is $\{r_{25},r_{26}\}$, and therefore the only cones in the star of $\tau$ are $\sigma_{\tau}$ and $\sigma_{\tau}'$, as required. 
	
	For the second statement, it suffices to consider the cone $\sigma=\RR_{\geq 0} \langle r_{3}, r_{4}, r_{6}, r_{9}, r_{25} \rangle + L_{\RR}$. Every facet of $\sigma$ other than $\RR_{\geq 0} \langle r_{3}, r_{4}, r_{6}, r_{9} \rangle + L_{\RR}$ has one ray in the $W(D_5)$-orbit of $\RR_{\geq 0}r_{25} +L_{\RR}$, and therefore is not in $\cO_{11}$. 
\end{proof}

\begin{lemma}
	\label{lem:glue18}
    For any $\tau\in \cO_{11}$, we have that $\sigma_{\tau} \cup \sigma_{\tau}' $ is the convex polyhedral cone spanned by the rays of $\sigma_{\tau}$ and $\sigma_{\tau}'$. Furthermore, each face of this cone is contained in $\cO_j$ for some $j\neq 11,18$. 
\end{lemma}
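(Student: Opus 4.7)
The strategy is to exploit the $G_5$-equivariance of $\Sigma_5$ and reduce to the representative in Equation~\eqref{eq:exampleTauStar}, namely $\tau = \RR_{\geq 0}\langle r_3, r_4, r_6, r_9\rangle + L_\RR$ with its two star cones $\sigma_\tau$ and $\sigma_\tau'$. All computations take place modulo $L_\RR$.

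For the convexity claim, the plan is to produce a positive linear relation
\[
\alpha\, r_{25} + \beta\, r_{26} \;\equiv\; \lambda_3\, r_3 + \lambda_4\, r_4 + \lambda_6\, r_6 + \lambda_9\, r_9 \pmod{L_\RR}
\]
with $\alpha, \beta > 0$ and $\lambda_i \geq 0$. Conceptually, such a relation arises by applying the involution in $\Stab_{G_5}(\tau)$ identified in the proof of Lemma~\ref{lem:star11} that interchanges $r_{25}$ and $r_{26}$: the sum $r_{25} + r_{26}$ is fixed under this symmetry, and the fixed subspace of $\sigma_\tau$ is forced to lie in $\tau + L_\RR$. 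Explicit values of $\alpha, \beta, \lambda_i$ can then be read off by a short linear-algebra check using the generators $\ell_0, \ldots, \ell_4$ of $L_\RR$ and the ray vectors from Section~\ref{sec:chow}. With the relation in hand, any point in the conic hull of the six rays with both $r_{25}$- and $r_{26}$-coefficients positive can be rewritten, by subtracting an appropriate multiple of the relation, so that one of these two coefficients vanishes and the point lands in $\sigma_\tau$ or $\sigma_\tau'$. Therefore $\sigma_\tau \cup \sigma_\tau' = \RR_{\geq 0}\langle r_3, r_4, r_6, r_9, r_{25}, r_{26}\rangle + L_\RR$, as a convex polyhedral cone.

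For the face-classification claim, first observe that $\tau$ lies in the relative interior of $\sigma_\tau \cup \sigma_\tau'$ and is therefore no longer a face of the union. Every other proper face $F$ of $\sigma_\tau \cup \sigma_\tau'$ either is a proper face of $\sigma_\tau$ or of $\sigma_\tau'$, or else it contains both $r_{25}$ and $r_{26}$. In the first case, Lemma~\ref{lem:star11} asserts that the unique face of $\sigma_\tau$ (resp.\ $\sigma_\tau'$) in $\cO_{11}$ is $\tau$, so $F \notin \cO_{11}$. In the second case, $F$ contains $r_{25}$; since $r_{25}$ is not a $G_5$-translate of any ray of $\tau$ (it is a sum of four $f_\lambda$, not a single $f_\lambda$), $F$ cannot lie in $\cO_{11}$ either. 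Finally, $F \notin \cO_{18}$ because proper faces of $\sigma_\tau \cup \sigma_\tau'$ have dimension strictly less than the dimension of its top-dimensional subcones $\sigma_\tau, \sigma_\tau' \in \cO_{18}$.

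The main obstacle will be the explicit construction of the positive relation in the first step: one must actually exhibit the scalars $\alpha, \beta, \lambda_i$ along with an appropriate combination of $\ell_0, \ldots, \ell_4$ that realizes the relation. The computation is mechanical but somewhat tedious; once it is in place, the face classification is a purely combinatorial consequence of Lemma~\ref{lem:star11} and the $G_5$-orbit description of the rays.
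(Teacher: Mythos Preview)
Your approach is essentially the same as the paper's: reduce by $G_5$-symmetry to the representative $\tau$, exhibit a positive relation writing a positive combination of $r_{25},r_{26}$ as a nonnegative combination of $r_3,r_4,r_6,r_9$ modulo $L_\RR$, and deduce the face statement from Lemma~\ref{lem:star11}. The paper simply writes down the explicit relation you anticipate, namely $r_3+r_4+r_6+r_9 \equiv r_{25}+r_{26} \pmod{L_\RR}$, and dispatches the face classification in one line by citing Lemma~\ref{lem:star11}.
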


\begin{proof}
By symmetry, it suffices to show that  
	\begin{equation*}
	\sigma_{\tau} \cup \sigma_{\tau}' =  \RR_{\geq 0} \langle r_{3}, r_{4}, r_{6}, r_{9}, r_{25}, r_{26} \rangle + L_{\RR}
	\end{equation*}
	where $\tau = \RR_{\geq 0} \langle r_{3}, r_{4}, r_{6}, r_{9} \rangle + L_{\RR}$.
	Because the cones $\sigma_{\tau}/L_{\RR}$ and $\sigma_{\tau}'/L_{\RR}$ are simplicial, we must show that
	\begin{equation*}
	(\RR_{\geq 0} \langle r_{25}, r_{26} \rangle) \cap \tau \neq L_{\RR}.
	\end{equation*}
	Indeed, consider the vector $w = f_{3}+f_{4}+f_{013}+f_{014}$. 	We may express $w$ as
	\begin{equation*}
	w = r_{3}+r_{4}+{r_6}+r_{9} = r_{25} + r_{26} +\tfrac{1}{2}( \ell_{0} + \ell_{1} - \ell_{2} - \ell_{3} + m_{4}).
	\end{equation*}
	This shows that $w$ lies in $(\RR_{\geq 0} \langle r_{25}, r_{26} \rangle) \cap \tau$ but not in $L_{\RR}$, as required. The last statement follows from Lemma \ref{lem:star11}.
\end{proof}

The fan $\Sigma_5'$ is obtained by gluing $\sigma_{\tau}$ and $\sigma_{\tau}'$ along $\tau$, for each $\tau\in\cO_{11}$. Explicitly, let $\cO'_{18} = \set{\sigma_{\tau}\cup \sigma_{\tau}'}{\tau \in \cO_{11}}$.
Define $\Sigma_5'$ by
\begin{equation*}
\Sigma_5' =  \set{\tau}{\tau\in \cO_{i} \text{ for } i\neq 11,18, \text{ or } \tau  \in \cO_{18}'}.
\end{equation*}

\begin{lemma}
	The collection of cones $\Sigma_5'$ is a fan whose support is $\TS_5^{\circ}$, and $\Sigma_5$ is a refinement of $\Sigma_5'$.
\end{lemma}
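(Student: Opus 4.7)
My plan is to verify three claims in sequence: that each element of $\Sigma_5'$ is a convex rational polyhedral cone, that $\Sigma_5'$ satisfies the fan axioms, and that its support equals $\TS_5^\circ$ with $\Sigma_5$ a refinement.

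First, I would observe that each cone of $\Sigma_5'$ is either a cone of $\Sigma_5$ from an orbit $\cO_i$ with $i\neq 11,18$, or of the form $\sigma_\tau\cup \sigma_\tau'$ with $\tau\in \cO_{11}$. By Lemma~\ref{lem:glue18}, each such union is a convex rational polyhedral cone, and equals $\sigma_\tau\cup \tau\cup \sigma_\tau'$, a union of three cones of $\Sigma_5$. Combined with the triviality that cones of $\Sigma_5'\setminus \cO'_{18}$ are themselves in $\Sigma_5$, this exhibits every cone of $\Sigma_5'$ as a union of cones of $\Sigma_5$, yielding both the support equality $|\Sigma_5'|=|\Sigma_5|=\TS_5^\circ$ and the refinement statement, conditional on $\Sigma_5'$ being a fan.

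Next, I would verify that faces of cones in $\Sigma_5'$ remain in $\Sigma_5'$. For a glued cone $\sigma_\tau\cup\sigma_\tau'\in\cO'_{18}$, Lemma~\ref{lem:glue18} already classifies every face as lying in some orbit $\cO_j$ with $j\neq 11,18$, hence in $\Sigma_5'$. For $\pi\in \Sigma_5'\setminus \cO'_{18}$, I would rule out $\cO_{11}$-faces using Lemma~\ref{lem:star11}: if a cone $\tau\in \cO_{11}$ were a proper face of $\pi$, then $\pi$ would lie in $\St(\tau)=\{\sigma_\tau,\sigma_\tau'\}\subset\cO_{18}$, contradicting $\pi\notin\cO_{11}\cup\cO_{18}$. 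Faces in $\cO_{18}$ are excluded by maximality of $\cO_{18}$-cones.

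The delicate step is the intersection axiom. Given $\sigma_1,\sigma_2\in \Sigma_5'$, I would decompose each as a union of $\Sigma_5$-cones and write $\sigma_1\cap\sigma_2$ as a finite union of common $\Sigma_5$-faces; since $\sigma_1\cap\sigma_2$ is convex (intersection of two convex sets), the question reduces to showing this convex union is a single face of each of $\sigma_1,\sigma_2$ in $\Sigma_5'$. I would handle three cases. When neither cone is glued, the intersection is already a common $\Sigma_5$-face, placed in $\Sigma_5'$ by the second step. When $\sigma_1=\sigma_\tau\cup\sigma_\tau'\in\cO'_{18}$ and $\sigma_2\in\Sigma_5'\setminus\cO'_{18}$, the intersection splits as $(\sigma_\tau\cap\sigma_2)\cup(\sigma_\tau'\cap\sigma_2)$, and the second step forces $\sigma_2\not\supset\tau$, so the two pieces meet exactly in the common subface $\tau\cap\sigma_2$; the convex union is then a face of $\sigma_1$ via Lemma~\ref{lem:glue18}'s face classification and a face of $\sigma_2$ by the fan property of $\Sigma_5$ applied to the individual pieces. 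The case where both cones are glued proceeds analogously, now with up to four pieces.

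The main obstacle is precisely this intersection check, since a convex union of two faces of a polyhedral cone need not a priori be a face. I would overcome it by leveraging the specific geometry established by Lemmas~\ref{lem:star11} and~\ref{lem:glue18}: each $\tau\in\cO_{11}$ is an interior codimension-one wall between the two uniquely determined maximal cones $\sigma_\tau,\sigma_\tau'$, and every $\sigma\in\cO_{18}$ is paired with exactly one other such cone. Under these hypotheses, the construction is an instance of the standard coarsening of a fan obtained by deleting interior codimension-one walls between pairs of maximal cones whose union is convex, which is known to produce a fan having the same support and refined by the original.
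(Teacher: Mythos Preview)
Your proof is correct and follows the same outline as the paper's: use Lemmas~\ref{lem:star11} and~\ref{lem:glue18} to verify face-closure, then check the intersection axiom by cases according to how many of the two cones are glued. The paper's intersection argument differs only in presentation: rather than invoking a general wall-removal principle, it writes a glued cone explicitly as $\sigma_2=\sigma_\tau\cup\sigma_\tau'=\RR_{\geq 0}\langle r_{a_1},\ldots,r_{a_4},r_{a_5},r_{a_5'}\rangle + L_\RR$ and cases on which of the extra rays $r_{a_5},r_{a_5'}$ lie in $\sigma_1$, thereby identifying $\sigma_1\cap\sigma_2$ directly as a single $\Sigma_5$-face in each case.
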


\begin{proof}
	It is clear that $\TS_5^{\circ}$ is the union of the cones in $\Sigma_5'$	and each cone of $\Sigma_5$ is contained in a cone of $\Sigma_5'$. Therefore, it suffices to show that $\Sigma_5'$ is a fan.  That is, we must show that every face of a cone in $\Sigma_5'$ is in $\Sigma_5'$, and the intersection of any two cones in $\Sigma_5'$ is a face of both of them. 
	
	Suppose $\sigma\in \Sigma_5'$, and $\sigma'$ is a face of $\sigma$. If $\sigma\in \cO_{i}$ for some $i\neq 11,18$, then $\sigma'$ is in $\cO_{j}$ for some $j\neq 11,18$ by Lemma \ref{lem:star11} and because $\Sigma_5$ is a fan; in particular $\sigma' \in \Sigma_5'$. If $\sigma \in \cO_{18}'$ then $\sigma'\in \cO_{j}$ for some $j\neq 11,18$ by Lemma \ref{lem:glue18}.

	Suppose $\sigma_1,\sigma_2\in \Sigma_5'$ and let $\sigma_3=\sigma_1 \cap \sigma_2$. If $\sigma_1\in \cO_i$ and $\sigma_2\in\cO_j$ for some $i,j\neq 11,18$, then $\sigma_{3}\in \cO_k$ for some $k\neq 11,18$ because $\Sigma_5$ is a fan and Lemma \ref{lem:star11}. Now  suppose 
	$\sigma_2\in \cO_{18}'$, by symmetry it suffices to consider $\sigma_2 = \sigma_{\tau} \cup \sigma_{\tau}'$ from Equation \eqref{eq:exampleTauStar}. 
	If $r_{25}, r_{26}\in \sigma_1$, then $\sigma_1$ meets the relative interior of $\sigma_{\tau}$ and $\sigma_{\tau}'$ by Lemma \ref{lem:glue18}, and hence $\sigma_1=\sigma_2=\sigma_3$ because $\Sigma_5$ is a fan. If $r_{25}\in \sigma_1$ but  $r_{26}\notin \sigma_1$, then $\sigma_3$ is a face of $\sigma_{\tau}$ not contained in $\tau$, and therefore is a face of $\sigma_1$ and $\sigma_2$. Finally, if that $r_{25}, r_{26} \notin \sigma_1$, then $\sigma_3$ is a face of $\sigma_1$ and $\tau$, but it cannot equal $\tau$ by Lemma \ref{lem:star11}. Therefore, $\sigma_3$ is also a face of $\sigma_2$, as required.	
\end{proof}

Let $Y$ denote the closure of $\bS_{5}^{\circ}/H$ in  the toric variety $X(\Sigma_5'/L_{\RR})$.

\begin{lemma}
	\label{lem:YInvariantSubspace}
	For each $\tau\in \Sigma_5'/L_{\RR}$, the space $|\St(\tau)|$ is not preserved under translation by any rational subspace of $N(\tau)_{\RR}$.
\end{lemma}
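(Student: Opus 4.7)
The strategy is to apply Lemma \ref{lem:notPreserved} orbit-by-orbit, mirroring the verification in Proposition \ref{prop:n4LogCanonical} for the $n=4$ case. Concretely, for each non-maximal $\tau \in \Sigma_5'$ strictly containing $L_\RR$, the plan is to produce a collection $A_\tau$ of maximal cones of $\Sigma_5'$ containing $\tau$ such that $\bigcap_{\sigma \in A_\tau} (N_\sigma)_\RR = (N_\tau)_\RR$; by Lemma \ref{lem:notPreserved} this precludes any rational translation symmetry of $|\St(\tau)|$ in $N(\tau)_\RR$.

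By $G_5$-equivariance of $\Sigma_5'$, it suffices to check one representative per $G_5$-orbit. The non-maximal cone orbits appearing in $\Sigma_5'$ are $\cO_0, \cO_1, \ldots, \cO_{10}, \cO_{12}, \cO_{13}, \cO_{14}$: indeed, the orbit $\cO_{11}$ no longer contributes a cone to $\Sigma_5'$ by Lemma \ref{lem:glue18}, while the lower-dimensional orbits $\cO_0, \ldots, \cO_{10}$ and the remaining 4-dimensional orbits $\cO_{12}, \cO_{13}, \cO_{14}$ are unchanged from $\Sigma_5$; the maximal orbits are $\cO_{15}, \cO_{16}, \cO_{17}, \cO_{18}'$. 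For each representative $\tau$, I would enumerate the maximal cones of $\Sigma_5'$ containing $\tau$, write down a primitive dual vector $v_\sigma^* \in N_\RR^*$ whose kernel is $(N_\sigma)_\RR$ for each such $\sigma$, and then check that a suitable subset of these dual vectors spans a subspace of dimension $\dim N(\tau)_\RR = \dim N_\RR - \dim N_\tau$.

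A small complication arises for those $\tau$ whose star in $\Sigma_5'$ meets $\cO_{18}'$: each corresponding maximal cone $\sigma_\tau \cup \sigma_{\tau}'$ carries the saturated lattice generated by the rays of $\sigma_\tau$ together with the extra ray of $\sigma_\tau'$ outside the common 4-face, and Lemma \ref{lem:glue18} guarantees this lattice has rank one larger than $N_\tau$, so that a single dual vector $v_{\sigma_\tau \cup \sigma_\tau'}^*$ is available just as for the unchanged orbits. In particular, for each orbit of 4-dimensional cones in $\Sigma_5'$, one needs at least two maximal cones from $A_\tau$ with linearly independent normal directions.

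The main obstacle is purely computational bookkeeping: one must individually verify the rank condition for the 14 non-maximal orbit representatives, each demanding a small linear-algebra check in $N_\RR^*$ using the explicit ray list given before Table \ref{table:cones}. This case analysis is exactly what the \texttt{sage} script referenced in the introduction performs: for each orbit representative it iterates through the maximal cones of $\Sigma_5'$ containing it, assembles a candidate $A_\tau$, and verifies that the span of the associated normal vectors has the required dimension, thereby confirming the intersection in Equation \eqref{eq:notPreserved}.
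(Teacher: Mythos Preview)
Your overall plan matches the paper's: reduce by $G_5$-symmetry to one representative per non-maximal orbit and, for each, exhibit a set $A_\tau$ of maximal cones satisfying \eqref{eq:notPreserved} so that Lemma~\ref{lem:notPreserved} applies. The paper carries this out exactly as you describe, with the explicit $A_{\tau_i}$ recorded in Table~\ref{table:cones} and the rank check done in \texttt{sage}.

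There is, however, a real error in your linear-algebra bookkeeping. You write that for each maximal $\sigma$ you will ``write down a primitive dual vector $v_\sigma^*\in N_\RR^*$ whose kernel is $(N_\sigma)_\RR$'' and later that ``a single dual vector $v_{\sigma_\tau\cup\sigma_\tau'}^*$ is available.'' This was the picture for $n=4$, where maximal cones have codimension~$1$ in $N_\RR$; for $n=5$ one has $\dim N_\RR=15$ and $\dim(N_\sigma)_\RR=10$, so $(N_\sigma)_\RR$ has codimension~$5$ and cannot be the kernel of a single linear form. The paper handles this correctly by taking, for each $\sigma\in A_\tau$, a Gale-dual matrix $S_{\sigma\tau}$ whose rows span the full annihilator of $(N_\sigma)_\RR$, and then checking that the block matrix $S_\tau$ obtained by stacking these has rank equal to $\codim_{\RR^{E(5)}}\tau$. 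Your ``span of the associated normal vectors'' description is salvageable once you replace ``normal vector'' by ``basis of the annihilator,'' but as written the dimension counts (e.g.\ ``at least two maximal cones with linearly independent normal directions'') do not make sense.

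Two smaller points. First, your worry about the glued cones in $\cO_{18}'$ is unnecessary: inspecting Table~\ref{table:cones}, every $A_{\tau_i}$ the paper uses is drawn from $\cO_{15}\cup\cO_{16}\cup\cO_{17}$, so the non-simplicial maximal cones are simply avoided. Second, you omit the orbit $\cO_{-1}$ (the lineality space itself) from your list of cases; the paper does check it, and it is needed for the statement as written.
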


\begin{proof}
	By Lemma \ref{lem:notPreserved},  we must find a collection of maximal cones $A_{\tau}$ satisfying Equation \eqref{eq:notPreserved}.  We need only check one cone, e.g., $\tau_i$, from each non-maximal $W(D_5)$-orbit $\cO_i$.  Sets $A_{\tau_i}$ that satisfy \eqref{eq:notPreserved} are listed in Table~\ref{table:cones}. 
	
	Equation \eqref{eq:notPreserved} may be verified for $A_{\tau}$ in the following way. For each $\sigma\in A_{\tau}$, let $R_{\sigma\tau}$ be the matrix whose rows consist of the rays of $\sigma$ together with the vectors $\ell_0, \ldots, \ell_4, m_0, \ldots, m_4$ spanning the lineality space. Let $S_{\sigma\tau}$ be a matrix such that $\rowsp R_{\sigma\tau} = \ker S_{\sigma\tau}$.  Now let $S_{\tau}$ be the matrix whose (block) rows are the $S_{\sigma\tau}$. The kernel of $S_{\tau}$ is  $\bigcap_{\sigma\in A_{\tau}}  (N_{\sigma})_{\RR}$. Thus, to establish \eqref{eq:notPreserved} for $A_{\tau}$, one must show that   $\rank(S_{\tau}) = \codim_{\RR^{E(5)}}(\tau)$, which is a routine verification. 
\end{proof}

\begin{table}[h!]
	\centering
	\begin{tabular}{|c|c|c|c|}
		\hline
		&Rep. $\tau_i\in \cO_i$ & Dim & $A_{\tau_{i}}$  \\
		\hline
		\hline
		-1 & $\emptyset$ & 5 & $t_{23}\cdot \tau_{15}$,\hspace{5pt} $s_{(13)}t_{03}\cdot \tau_{15}$  \\
		\hline
		0 & $\{r_{26}\}$ & 6 & $s_{(12)}\cdot \tau_{15}$,\hspace{5pt} $s_{(34)}\cdot \tau_{15}$,\hspace{5pt} $s_{(12)(34)}\cdot \tau_{15}$ \\
		\hline
		1 & $\{r_4\}$ & 6 & $s_{(142)}t_{14}\cdot \tau_{15}$,\hspace{5pt}  $s_{(024)}t_{03}\cdot \tau_{15}$ \\
		\hline
		2 & $\{r_{3},r_{4}\}$ & 7 & $s_{(021)}t_{34} \cdot \tau_{16}$,\hspace{5pt} $s_{(02)(34)} \cdot \tau_{16}$  \\
		\hline
		3 & $\{r_{4},r_{5}\} $ & 7 & $s_{(032)(14)}t_{23}\cdot \tau_{15}$,\hspace{5pt} $s_{(02)(13)} \cdot \tau_{15}$  \\
		\hline
		4 & $\{r_{4},r_{26}\}$ & 7 & $\tau_{15}$,\hspace{5pt} $s_{(012)}\cdot \tau_{15}$,\hspace{5pt} $s_{(12)}\cdot \tau_{15}$ \\
		\hline
		5 & $\{r_{4},r_{5},r_{9}\}$ & 8 & $\tau_{16}$, \hspace{5pt} $s_{(243)}t_{23}\cdot \tau_{17} $  \\
		\hline
		6 & $\{r_{4},r_{9},r_{26}\}$ & 8 & $\tau_{15}$,\hspace{5pt} $s_{(03)(142)}t_{02} \cdot \tau_{17}$,\hspace{5pt} $s_{(03)(142)}t_{03} \cdot \tau_{17}$ \\
		\hline
		7 & $\{r_4,r_6,r_{26}\}$ & 8 & $s_{(012)}\cdot \tau_{15}$, \hspace{5pt} $s_{(12)}\cdot \tau_{15}$  \\
		\hline
		8 & $\{r_{3},r_{4},r_{5}\}$ & 8 & $\tau_{16} $, \hspace{5pt} $\tau_{17}$  \\
		\hline
		9 & $\{r_{3},r_{4},r_{26}\}$ & 8 & $s_{(04132)}t_{03}\cdot \tau_{17}$, \hspace{5pt} $ s_{(032)(14)}t_{03}\cdot \tau_{17}$  \\
		\hline
		10 & $\{r_{4}, r_{7}, r_{8}, r_{9}\}$ & 9 & $\tau_{15}$,\hspace{5pt} $s_{(23)}\cdot \tau_{15}$ \\
		\hline
		11 & $\{r_{3}, r_{4}, r_{6}, r_{9}\}$ & 9 & N/A\\
		\hline
		12 & $\{r_{3},r_{4},r_{5},r_{25}\}$ & 9 & $\tau_{17}$, \hspace{5pt} $s_{(34)} \cdot \tau_{17}$   \\
		\hline
		13 & $\{r_{3},r_{4},r_{5},r_{6}\}$ & 9 & $s_{(12)}\cdot \tau_{16}$, \hspace{5pt} $s_{(01)(34)} \cdot \tau_{16}$  \\
		\hline
		14 & $\{r_{3},r_{4},r_{6},r_{26}\}$ & 9 & $s_{(03)(14)}t_{03}\cdot \tau_{17}$, \hspace{5pt} $s_{(0314)}t_{03}\cdot \tau_{17}$ \\
		\hline
		15 & $\{r_{4},r_{7},r_{8},r_{9},r_{26}\}$ & 10 & N/A  \\
		\hline
		16 &  $\{r_{3},r_{4},r_{5},r_{7},r_{9}\}$ & 10 & N/A   \\
		\hline
		17 &  $\{r_{3},r_{4},r_{5},r_{6},r_{25}\}$ & 10 & N/A  \\
		\hline
		18 &  $\{r_{3},r_{4},r_{6},r_{9},r_{26}\}$ & 10 & N/A  \\
		\hline
	\end{tabular}
	\caption{Sets $A_{\tau_i}$ satisfying \eqref{eq:notPreserved}}
	\label{table:cones}
\end{table}

\begin{proof}[Proof of Theorem \ref{thm:logcanonical}]
By Lemma \ref{lem:chowQuotientSchon}, the initial degenerations of $\bS_{5}^{\circ}/H$ are smooth and irreducible, so $\chow{\bS_5}{H}$ and $Y$ are sch\"on compactifications of $\bS_{5}^{\circ}/H$. Each $Y_{\tau}$ for $\tau\in \Sigma_5'$ is not preserved by a rational subspace of $N(\tau)_{\RR}$ by Lemma \ref{lem:YInvariantSubspace}. Therefore, $K_{Y}+B$ is ample by Proposition \ref{prop:KYBAmple}, so $Y$ is the log canonical model of $\chow{\bS_{5}}{H}$.

Now consider the last statement. One readily verifies that the fan $\Sigma_5/L_{\RR}$ is strictly simplicial as defined in the beginning of this section. So $\chow{\bS_5}{H}$ is smooth with simple normal crossings boundary by Proposition \ref{prop:schonSmoothSNC}.   The morphism $\chow{\bS_5}{H} \to Y$ is log crepant by \cite[Theorem~1.4]{Tevelev}.
\end{proof}

\subsection*{Funding}
This research was partially supported by NSF RTG Award DMS–1502553 and  ``Symbolic Tools in Mathematics and their Application'' (TRR 195, project-ID 286237555).

\subsection*{Acknowledgments}
The author would like to thank Michael Joswig and anonymous referees for their constructive feedback on earlier drafts.


\pagebreak

\appendix

\section{Subdivisions for $n=5$}
\label{appendix:subdivisionsn5}

In this appendix, we record the matroidal subdivisions of $\Delta(5)$. Each table consists of the subdivisions corresponding to the cones of $\Dr(5)$ of the prescribed dimension. The leftmost column corresponds to the leftmost column of Table \ref{table:cones}. The \textit{Adjacency graph} column records the adjacency graph of the subdivision.  Finally, in the \textit{Bases of the $\Delta$-matroids $M_i$} column, we list the bases of the matroids $M_i$, where $M_i$ is the matroid corresponding to the vertex $i$ in the adjacency graph.

\begin{table}[h!]
	\centering\renewcommand\cellalign{lc}
	\begin{tabular}{|c|c|m{10cm}|}
		\hline
		Index & Adjacency graph & Bases of the $\Delta$-matroids $M_i$ \\
		\hline
		\hline
		0 & \raisebox{-\totalheight+13mm}{\includegraphics[width=4cm]{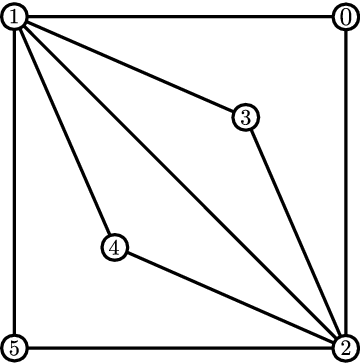}}
		&   \footnotesize{\makecell{$M_{0}: \{1,2,012,123,124,134,234,01234\}$,  \\
				$M_{1}: \{0,1,2,3,012,013,023,123,034,134,234,01234\}$, \\ 
				$M_{2}: \{0,1,2,4,012,014,024,124,034,134,234,01234\}$, \\ 
				$M_{3}: \{0,2,012,023,024,034,234,01234\}$,\\ 
				$M_{4}: \{0,1,012,013,014,034,134,01234\}$, \\ 
				$M_{5}: \{0,1,2,3,4,034,134,234\}$}}  \\
		\hline
		1 &  \raisebox{-\totalheight+3mm}{\includegraphics[width=2.75cm]{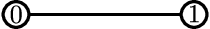}}
		& \footnotesize{\makecell{$M_{0}:\{0,1,2,3,012,013,023,123,014,024,124,034,134,234,01234\}$,\\  
				$M_{1}:\{0,1,2,3,4,014,024,124,034,134,234 \}$	}}   \\
		\hline
	\end{tabular}
	\caption{Dimension 6}
	\label{tab:dim1}
\end{table}

\begin{table}[h!]
	\centering\renewcommand\cellalign{lc}
	\begin{tabular}{|c|c|m{10cm}|}
		\hline
		Index & Adjacency graph & Bases of the $\Delta$-matroids $M_i$  \\
		\hline
		\hline
		2 & \raisebox{-\totalheight+10mm}{\includegraphics[width=3cm]{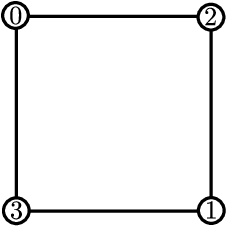}}
		& \footnotesize{\makecell{$M_{0}:\{0,1,2,012,013,023,123,014,024,124,034,134,234,01234 \}$, \\ 
				$M_{1}:\{0,1,2,3,4,034,134,234 \}$, \\ 
				$M_{2}:\{0,1,2,4,014,024,124,034,134,234\}$, \\
				$M_{3}:\{0,1,2,3,013,023,123,034,134,234 \}$
		}}  \\ 
		\hline
		3 & \raisebox{-\totalheight+6mm}{\includegraphics[width=4cm]{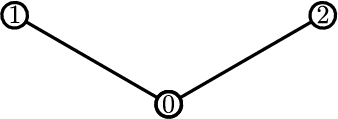}}
		& \footnotesize{\makecell{$M_{0}:\{0,1,2,3,013,023,123,014,024,124,034,134,234,01234\}$, \\ 
				$M_{1}:\{0,1,2,012,013,023,123,014,024,124,01234\}$,\\ 
				$M_{2}:\{0,1,2,3,4,014,024,124,034,134,234\}$ }} \\
		\hline
		4 & \raisebox{-\totalheight+11mm}{\includegraphics[width=4cm]{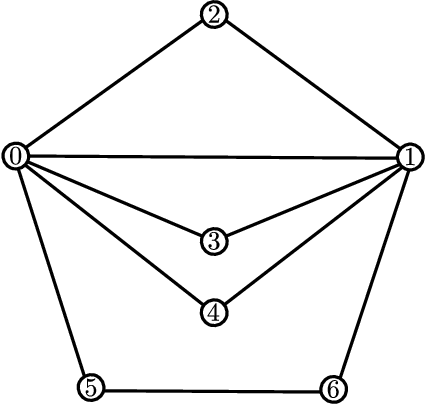}}
		& \footnotesize{\makecell{$M_{0}:\{0,1,2,3,012,013,023,123,034,134,234,01234\}$, \\ 
				$M_{1}:\{0,1,2,012,014,024,124,034,134,234,01234\}$, \\ 
				$M_{2}:\{0,2,012,023,024,034,234,01234\}$, \\ 
				$M_{3}:\{0,1,012,013,014,034,134,01234\}$, \\ 
				$M_{4}:\{1,2,012,123,124,134,234,01234\}$, \\ 
				$M_{5}:\{0,1,2,3,4,034,134,234\}$, \\ 
				$M_{6}:\{0,1,2,3,4,034,134,234\}$}} \\ 
		\hline
	\end{tabular}
	\caption{Dimension 7}
	\label{tab:dim2}
\end{table}

\newpage

\begin{table}[h!]
	\centering\renewcommand\cellalign{lc}
	\begin{tabular}{|c|l|m{9cm}|}
		\hline
		Index & Adjacency graph & Bases of the $\Delta$-matroids $M_i$  \\
		\hline
		\hline
		5& \raisebox{-\totalheight+8mm}{\includegraphics[width=4cm]{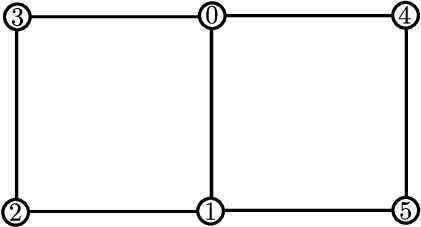}}
		& \footnotesize{\makecell{
				$M_{0}:\{0,1,2,3,013,023,123,024,124,034,134,234,01234\}$, \\  
				$M_{1}:\{0,1,013,014,024,124,034,134,01234\}$, \\ 
				$M_{2}:\{0,1,012,013,014,024,124,01234\}$, \\ 
				$M_{3}:\{0,1,2,012,013,023,123,024,124,01234\}$, \\ 
				$M_{4}:\{0,1,2,3,4,024,124,034,134,234\}$, \\ 
				$M_{5}:\{0,1,4,014,024,124,034,134\}$ }}\\
		\hline
		6& \raisebox{-\totalheight+10mm}{\includegraphics[width=4cm]{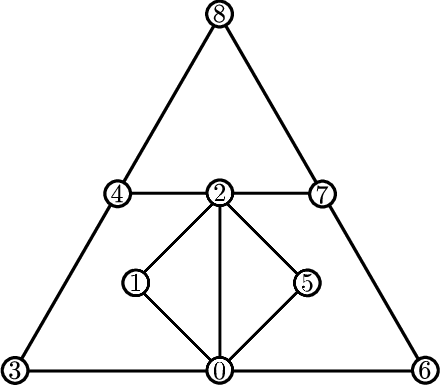}}
		& \footnotesize{\makecell{
				$M_{0}:\{0,1,2,3,012,013,023,123,034,134,234,01234\}$, \\ 
				$M_{1}:\{0,2,012,023,024,034,234,01234\}$, \\ 
				$M_{2}:\{0,1,2,012,024,124,034,134,234,01234\}$, \\ 
				$M_{3}:\{0,1,012,013,014,034,134,01234\}$, \\ 
				$M_{4}:\{0,1,012,014,024,124,034,134,01234\}$, \\
				$M_{5}:\{1,2,012,123,124,134,234,01234\}$, \\ 
				$M_{6}:\{0,1,2,3,4,034,134,234\}$, \\ 
				$M_{7}:\{0,1,2,4,024,124,034,134,234\}$, \\ 
				$M_{8}:\{0,1,4,014,024,124,034,134\}$}} \\ 
		\hline
		7& \raisebox{-\totalheight+12mm}{\includegraphics[width=4cm]{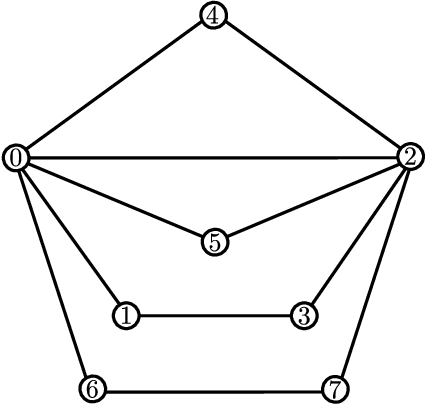}}
		& \footnotesize{\makecell{
				$M_{0}:\{0,1,2,3,012,023,123,034,134,234,01234\}$, \\ 
				$M_{1}:\{0, 1, 3, 012, 013, 023, 123, 034, 134, 01234\}$,\\ 
				$M_{2}:\{0, 1, 2, 012, 014, 024, 124, 034, 134, 234, 01234\}$, \\ 
				$M_{3}:\{0, 1, 012, 013, 014, 034, 134, 01234\}$, \\ 
				$M_{4}:\{1, 2, 012, 123, 124, 134, 234, 01234\}$, \\ 
				$M_{5}:\{0, 2, 012, 023, 024, 034, 234, 01234\}$, \\ 
				$M_{6}:\{0, 1, 2, 3, 4, 034, 134, 234\}$, \\ 
				$M_{7}:\{0, 1, 2, 4, 014, 024, 124, 034, 134, 234\}$}} \\
		\hline
		8& \raisebox{-\totalheight+15mm}{\includegraphics[width=4cm]{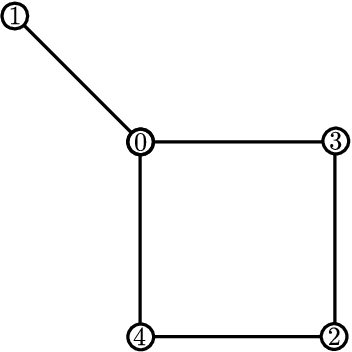}}
		& \footnotesize{\makecell{$M_{0}:\{0, 1, 2, 013, 023, 123, 014, 024, 124, 034, 134, 234, 01234\}$, \\ 
				$M_{1}:\{0, 1, 2, 012, 013, 023, 123, 014, 024, 124, 01234\}$, \\ 
				$M_{2}:\{0, 1, 2, 3, 4, 034, 134, 234\}$, \\ 
				$M_{3}:\{0, 1, 2, 4, 014, 024, 124, 034, 134, 234\}$, \\ 
				$M_{4}:\{0, 1, 2, 3, 013, 023, 123, 034, 134, 234\}$}} \\ 
		\hline
		9& \raisebox{-\totalheight+15mm}{\includegraphics[width=4cm]{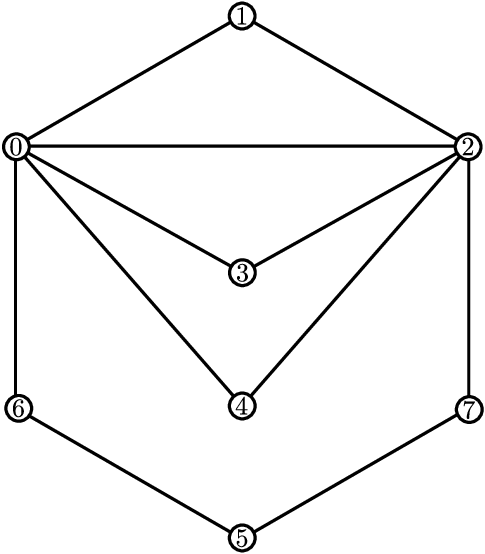}}	
		&  \footnotesize{\makecell{
				$M_{0}:\{0, 1, 2, 012, 014, 024, 124, 034, 134, 234, 01234\}$,\\ 
				$M_{1}:\{0, 2, 012, 023, 024, 034, 234, 01234\}$, \\ 
				$M_{2}:\{0, 1, 2, 012, 013, 023, 123, 034, 134, 234, 01234\}$, \\ 
				$M_{3}:\{1, 2, 012, 123, 124, 134, 234, 01234\}$, \\ 
				$M_{4}:\{0, 1, 012, 013, 014, 034, 134, 01234\}$, \\ 
				$M_{5}:\{0, 1, 2, 3, 4, 034, 134, 234\}$, \\ 
				$M_{6}:\{0, 1, 2, 4, 014, 024, 124, 034, 134, 234\}$, \\ 
				$M_{7}:\{0, 1, 2, 3, 013, 023, 123, 034, 134, 234\}$}}\\
		\hline
	\end{tabular}
	\caption{Dimension 8}
	\label{tab:dim3}
\end{table}

\newpage

\begin{table}[tbh!]
	\centering\renewcommand\cellalign{lc}
	\begin{tabular}{|c|l|m{9cm}|}
		\hline
		Index & Adjacency graph & Bases of the $\Delta$-matroids $M_i$  \\
		\hline
		\hline 
		10& \raisebox{-\totalheight+5mm}{\includegraphics[width=4cm]{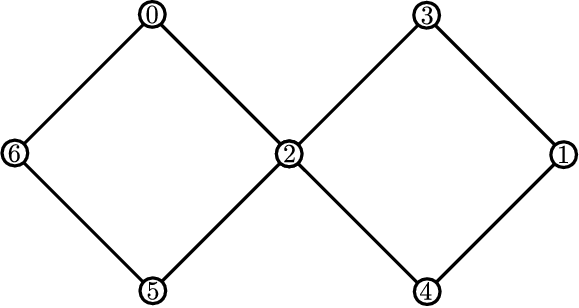}}	
		& \footnotesize{\makecell{
				$M_{0}:\{0, 1, 012, 013, 014, 024, 124, 034, 134, 01234\}$, \\
				$M_{1}:\{2, 3, 012, 013, 023, 123, 234, 01234\}$, \\
				$M_{2}:\{0, 1, 2, 3, 012, 013, 024, 124, 034, 134, 234, 01234\}$, \\
				$M_{3}:\{0, 2, 3, 012, 013, 023, 024, 034, 234, 01234\}$, \\
				$M_{4}:\{1, 2, 3, 012, 013, 123, 124, 134, 234, 01234\}$, \\
				$M_{5}:\{0, 1, 2, 3, 4, 024, 124, 034, 134, 234\}$, \\
				$M_{6}:\{0, 1, 4, 014, 024, 124, 034, 134\}$ }}\\
		\hline
		11& \raisebox{-\totalheight+11mm}{\includegraphics[width=4cm]{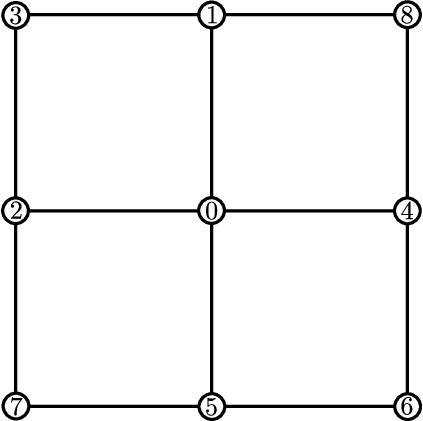}}	
		& \footnotesize{\makecell{
				$M_{0}:\{0, 1, 2, 012, 023, 123, 024, 124, 034, 134, 234, 01234\}$, \\ 
				$M_{1}:\{0, 1, 012, 013, 023, 123, 034,	134, 01234\}$, \\ 
				$M_{2}:\{0, 1, 012, 014, 024, 124, 034, 134, 01234\}$, \\ 
				$M_{3}:\{0, 1, 012, 013, 014, 034, 134,	01234\}$, \\ 
				$M_{4}:\{0, 1, 2, 3, 4, 034, 134, 234\}$, \\ 
				$M_{5}:\{0, 1, 2, 4, 024, 124, 034, 134, 234\}$, \\
				$M_{6}:\{0, 1, 2, 3, 023, 123, 034, 134, 234\}$, \\ 
				$M_{7}:\{0, 1, 4, 014, 024, 124, 034, 134\}$, \\ 
				$M_{8}:\{0, 1, 3, 013, 023, 123, 034, 134\}$}} \\
		\hline
		12& \raisebox{-\totalheight+11mm}{\includegraphics[width=4cm]{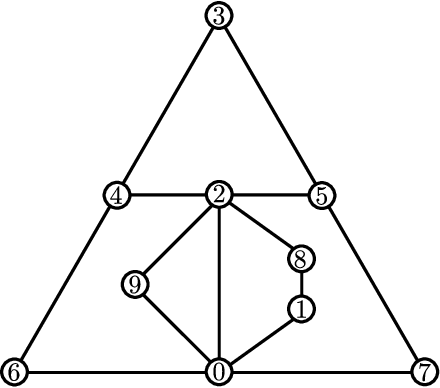}}	
		& \footnotesize{\makecell{
				$M_{0}:\{0, 1, 013, 023, 123, 014, 024, 124, 034, 134, 01234\}$, \\
				$M_{1}:\{0, 1, 012, 013, 023, 123, 014, 024, 124, 01234\}$, \\
				$M_{2}:\{0, 1, 2, 023, 123, 024, 124, 034, 134, 234\}$, \\
				$M_{3}:\{0, 1, 2, 3, 4, 034, 134, 234\}$, \\
				$M_{4}:\{0, 1, 2, 4, 024, 124, 034, 134, 234\}$, \\
				$M_{5}:\{0, 1, 2, 3, 023, 123, 034, 134, 234\}$, \\
				$M_{6}:\{0, 1, 4, 014, 024, 124, 034, 134\}$, \\
				$M_{7}:\{0, 1, 3, 013, 023, 123, 034, 134\}$, \\
				$M_{8}:\{0, 1, 2, 012, 023, 123, 024, 124\}$, \\
				$M_{9}:\{023, 123, 024, 124, 034, 134, 234, 01234\}$}} \\
		\hline
		13 & \raisebox{-\totalheight+11mm}{\includegraphics[width=4cm]{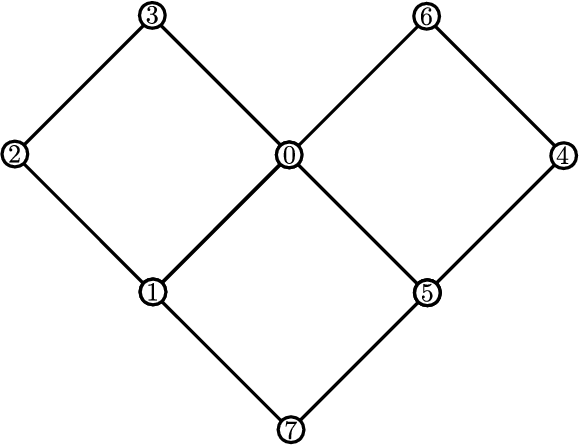}}	
		& \footnotesize{\makecell{
				$M_{0}:\{0, 1, 2, 023, 123, 014, 024, 124, 034, 134, 234, 01234\}$, \\
				$M_{1}:\{0, 1, 013, 023, 123, 014, 034, 134, 01234\}$, \\
				$M_{2}:\{0, 1, 012, 013, 023, 123, 014, 01234\}$, \\
				$M_{3}:\{0, 1, 2, 012, 023, 123, 014, 024, 124, 01234\}$, \\
				$M_{4}:\{0, 1, 2, 3, 4, 034, 134, 234\}$, \\
				$M_{5}:\{0, 1, 2, 3, 023, 123, 034, 134, 234\}$, \\
				$M_{6}:\{0, 1, 2, 4, 014, 024, 124, 034, 134, 234\}$, \\
				$M_{7}:\{0, 1, 3, 013, 023, 123, 034, 134\}$}} \\
		\hline
		14&
		\raisebox{-\totalheight+11mm}{\includegraphics[width=4cm]{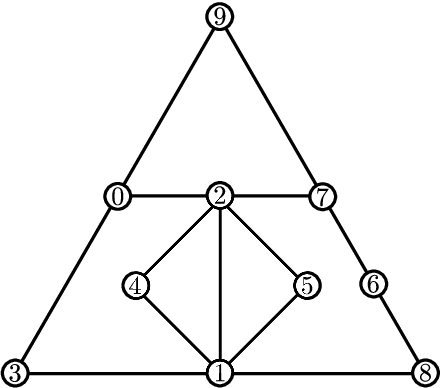}}	
		& \footnotesize{\makecell{
				$M_{0}:\{0, 1, 012, 013, 023, 123, 034, 134, 01234\}$, \\
				$M_{1}:\{0, 1, 2, 012, 014, 024, 124, 034, 134, 234, 01234\}$, \\
				$M_{2}:\{0, 1, 2, 012, 023, 123, 034, 134, 234, 01234\}$, \\
				$M_{3}:\{0, 1, 012, 013, 014, 034, 134, 01234\}$, \\
				$M_{4}:\{1, 2, 012, 123, 124, 134, 234, 01234\}$, \\
				$M_{5}:\{0, 2, 012, 023, 024, 034, 234, 01234\}$, \\
				$M_{6}:\{0, 1, 2, 3, 4, 034, 134, 234\}$, \\
				$M_{7}:\{0, 1, 2, 3, 023, 123, 034, 134, 234\}$, \\
				$M_{8}:\{0, 1, 2, 4, 014, 024, 124, 034, 134, 234\}$, \\
				$M_{9}:\{0, 1, 3, 013, 023, 123, 034, 134\}$}} \\
		\hline
	\end{tabular}
	\caption{Dimension 9}
	\label{tab:dim4}
\end{table}

\newpage

\begin{table}[tbh!]
	\centering\renewcommand\cellalign{lc}
	\begin{tabular}{|c|c|m{9cm}|}
		\hline
		Index & Adjacency graph & Bases of the $\Delta$-matroids $M_i$  \\
		\hline
		\hline 
		15& \raisebox{-\totalheight+12mm}{\includegraphics[width=4cm]{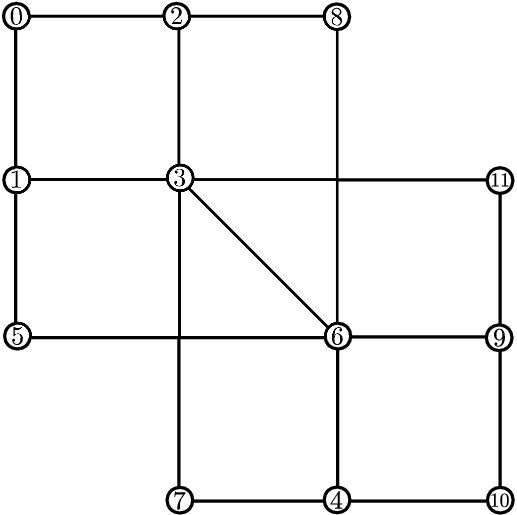}}	
		& \footnotesize{\makecell{
				$M_{0}:\{2, 3, 012, 013, 023, 123, 234, 01234\}$, \\
				$M_{1}:\{0, 2, 3, 012, 013, 023, 034, 234, 01234\}$, \\
				$M_{2}:\{1, 2, 3, 012, 013, 123, 134, 234, 01234\}$, \\
				$M_{3}:\{0, 1, 2, 3, 012, 013, 034, 134, 234, 01234\}$, \\
				$M_{4}:\{0, 1, 012, 014, 024, 124, 034, 134, 01234\}$, \\
				$M_{5}:\{0, 2, 012, 023, 024, 034, 234, 01234\}$, \\
				$M_{6}:\{0, 1, 2, 012, 024, 124, 034, 134, 234, 01234\}$, \\
				$M_{7}:\{0, 1, 012, 013, 014, 034, 134, 01234\}$, \\
				$M_{8}:\{1, 2, 012, 123, 124, 134, 234, 01234\}$, \\
				$M_{9}:\{0, 1, 2, 4, 024, 124, 034, 134, 234\}$, \\
				$M_{10}:\{0, 1, 4, 014, 024, 124, 034, 134\}$, \\
				$M_{11}:\{0, 1, 2, 3, 4, 034, 134, 234\}$}} \\
		\hline
		16& \raisebox{-\totalheight+12mm}{\includegraphics[width=4cm]{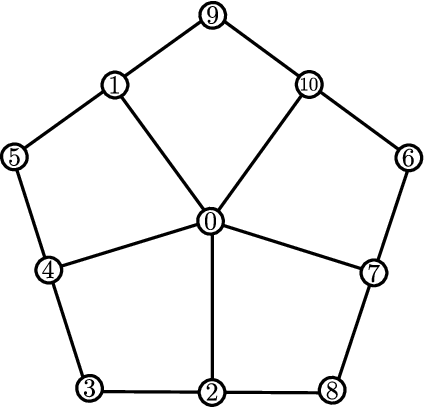}}	
		& \footnotesize{\makecell{
				$M_{0}:\{0, 1, 2, 013, 123, 024, 124, 034, 134, 234, 01234\}$, \\
				$M_{1}:\{0, 2, 013, 023, 123, 024, 034, 234, 01234\}$, \\
				$M_{2}:\{0, 1, 013, 014, 024, 124, 034, 134, 01234\}$, \\
				$M_{3}:\{0, 1, 012, 013, 014, 024, 124, 01234\}$, \\
				$M_{4}:\{0, 1, 2, 012, 013, 123, 024, 124, 01234\}$, \\
				$M_{5}:\{0, 2, 012, 013, 023, 123, 024, 01234\}$, \\
				$M_{6}:\{0, 1, 2, 3, 4, 034, 134, 234\}$, \\
				$M_{7}:\{0, 1, 2, 4, 024, 124, 034, 134, 234\}$, \\
				$M_{8}:\{0, 1, 4, 014, 024, 124, 034, 134\}$, \\
				$M_{9}:\{0, 2, 3, 013, 023, 123, 034, 234\}$, \\
				$M_{10}:\{0, 1, 2, 3, 013, 123, 034, 134, 234\}$}} \\
		\hline
		17& \raisebox{-\totalheight+12mm}{\includegraphics[width=4cm]{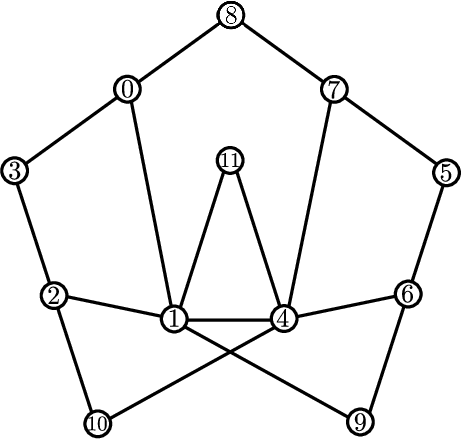}}	
		& \footnotesize{\makecell{
				$M_{0}:\{0, 1, 013, 023, 123, 014, 034, 134, 01234\}$, \\
				$M_{1}:\{0, 1, 023, 123, 014, 024, 124, 034, 134, 01234\}$, \\
				$M_{2}:\{0, 1, 012, 023, 123, 014, 024, 124, 01234\}$, \\
				$M_{3}:\{0, 1, 012, 013, 023, 123, 014, 01234\}$, \\
				$M_{4}:\{0, 1, 2, 023, 123, 024, 124, 034, 134, 234\}$, \\
				$M_{5}:\{0, 1, 2, 3, 4, 034, 134, 234\}$, \\
				$M_{6}:\{0, 1, 2, 4, 024, 124, 034, 134, 234\}$, \\
				$M_{7}:\{0, 1, 2, 3, 023, 123, 034, 134, 234\}$, \\
				$M_{8}:\{0, 1, 3, 013, 023, 123, 034, 134\}$, \\
				$M_{9}:\{0, 1, 4, 014, 024, 124, 034, 134\}$, \\
				$M_{10}:\{0, 1, 2, 012, 023, 123, 024, 124\}$, \\
				$M_{11}:\{023, 123, 024, 124, 034, 134, 234, 01234\}$}} \\
		\hline
		18 & 		\raisebox{-\totalheight+12mm}{\includegraphics[width=4cm]{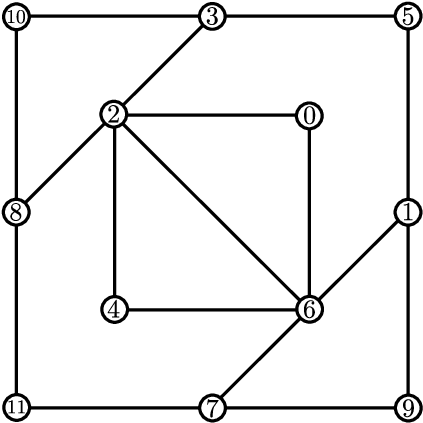}}
		& \footnotesize{\makecell{
				$M_{0}:\{0, 2, 012, 023, 024, 034, 234, 01234\}$, \\
				$M_{1}:\{0, 1, 012, 014, 024, 124, 034, 134, 01234\}$, \\
				$M_{2}:\{0, 1, 2, 012, 023, 123, 034, 134, 234, 01234\}$, \\
				$M_{3}:\{0, 1, 012, 013, 023, 123, 034, 134, 01234\}$, \\
				$M_{4}:\{1, 2, 012, 123, 124, 134, 234, 01234\}$, \\
				$M_{5}:\{0, 1, 012, 013, 014, 034, 134, 01234\}$, \\
				$M_{6}:\{0, 1, 2, 012, 024, 124, 034, 134, 234, 01234\}$, \\
				$M_{7}:\{0, 1, 2, 4, 024, 124, 034, 134, 234\}$, \\
				$M_{8}:\{0, 1, 2, 3, 023, 123, 034, 134, 234\}$, \\
				$M_{9}:\{0, 1, 4, 014, 024, 124, 034, 134\}$, \\
				$M_{10}:\{0, 1, 3, 013, 023, 123, 034, 134\}$, \\
				$M_{11}:\{0, 1, 2, 3, 4, 034, 134, 234\}$}} \\
		\hline
	\end{tabular}
	\caption{Dimension 10}
	\label{tab:dim5}
\end{table}

\end{document}